\newtheorem{Theorem}{Theorem}[section]
\newtheorem{Lemma}[Theorem]{Lemma}
\newtheorem{Proposition}[Theorem]{Proposition}
\newtheorem{Remark}[Theorem]{Remark}
\newcommand{\R}{\mathbb{R}}
\numberwithin{equation}{section}
\numberwithin{figure}{section}
\begin{document}

\title[Spreading under shifting climate]{A free boundary problem for spreading\\ under shifting climate$^\S$}
\thanks{$^\S$  X. Hao was supported by NSFC (11501318, 11871302), the China Postdoctoral Science Foundation (2017M612230) and the International Cooperation Program of Key Professors by Qufu Normal University,  and Y. Du was supported by the Australian Research Council.
}
\author[Y. Hu, X. Hao, X. Song and Y. Du]{Yuanyang Hu$^1$, Xinan Hao$^2$, Xianfa Song$^3$ and Yihong Du$^4$}
\thanks{$^1$ School of Mathematical Sciences, University of Science and Technology of China, Hefei, Anhui 230026, P.R. China.}
\thanks{$^2$ School of Mathematical Sciences, Qufu Normal University, Qufu 273165, Shandong, P.R. China.}
\thanks{$^3$ Department of Mathematics, School of Mathematics, Tianjin University, Tianjin, 300072, P.R. China.}
\thanks{$^4$ School of Science and Technology, University of New England, Armidale, NSW 2351, Australia.}

\thanks{{\bf Emails:} {\sf yuanyhu@mail.ustc.edu.cn} (Y. Hu),  {\sf haoxinan2004@163.com} (X. Hao), {\sf songxianfa@tju.edu.cn} (X. Song), {\sf ydu@une.edu.au} (Y. Du).}
\date{\today}

\begin{abstract}
In this paper we consider a free boundary problem which models the spreading of an invasive species whose spreading is enhanced by the changing climate. We assume that the climate is shifting with speed $c$ and obtain a complete classification of the long-time dynamical behaviour of the species. The model is similar to that in \cite{DuWeiZhou2018} with a slight refinement in the free boundary condition. While \cite{DuWeiZhou2018}, like many works in the literature, investigates the case that unfavourable environment is shifting into the favourable habitat of the concerned species, here we examine the situation that the unfavourable habitat of an invasive species is replaced by a favourable environment with a shifting speed $c$.
We show that a spreading-vanishing dichotomy holds, and there exists a critical speed $c_0$ such that 
 when spreading happens in the case $c<c_0$, the spreading profile is determined by a semi-wave with forced speed $c$, but when $c\geq c_0$,  the spreading profile is determined by the usual semi-wave with speed $c_0$.
\end{abstract}
\keywords{Diffusive logistic equation, free boundary problem, spreading and vanishing, shifting climate.}

\maketitle

\section{Introduction}

It is widely accepted that climate change has profound impacts on the survival and spreading of  ecological  species.
In recent years, increasing efforts have been devoted to the development and analysis of mathematical models 
 addressing such impacts; see, for example, \cite{BDNZ, BF, DuWeiZhou2018, FLW, HZ, LeiDu2017, LMS, LBBF, LBSF, LWZ, WZ}  and the references therein. 

Most of these models focus on the situation that climate change causes the living environment of the concerned species changing from favourable to unfavourable, and hence endangers the survival of the species. In the case that unfavourable environment shifts into the favourable habitat of the concerned species with speed $c$, a basic feature of the existing models is that there exists a critical speed $c_0$ determined by the favourable environment and the concerned species, such that if $c>c_0$, then the species will vanish eventually, and if $c<c_0$, then the species may survive over a moving band of the environment.

However, some species may benefit from the climate change (\cite{FH}), that is, their living environment is improved by the climate change.
Understanding this kind of effect of climate change is particularly relevant in invasion ecology, as some invasive species may take advantage of the climate change to enhance their invasion. In this paper we consider such a situation based on the model of \cite{DuWeiZhou2018}, which has the following form:
\begin{equation}\left\{\begin{array}{ll}\medskip
\displaystyle u_t=du_{xx}+A(x-ct)u-bu^2,\ \ & t>0,\ \ 0<x<h(t),\\
\medskip\displaystyle u_x(0,t)=u(h(t),t)=0,\ \ & t>0,\\
\medskip\displaystyle h'(t)= -\mu u_x(h(t), t),\ \ & t>0,\\
\displaystyle h(0)=h_0,\; u(x, 0)=u_0(x),\ \ & 0\leq x\leq h_0.
\end{array}\right.\label{MP}
\end{equation}
Here $u(x,t)$ stands for the population density of the concerned species, whose range is the changing interval $[0, h(t)]$;
$h_0,\,\mu, b, c, d$ are positive constants; and
the initial function $u_0(x)$ satisfies
\begin{equation}\label{i}
 u_0\in C^2([0,h_0]),\ u_0'(0)=u_0(h_0)=0, \ u_0'(h_0)<0\ \text{and}\ u_0>0\ \text{in}\ [0,h_0).
\end{equation}

So in this model, the range of the species is the varying interval $[0, h(t)]$, and the species can invade the environment from the right end of the range ($x=h(t)$), with speed proportional to the population gradient $u_x$ there, while at the fixed boundary $x=0$, a no-flux boundary condition is assumed. A deduction of the free boundary condition $h'(t)=-\mu u_x(t, h(t))$ from ecological considerations can be found in \cite{BDu2012}.

The function $A(x-ct)$ represents the assumption that  the environment is changing at a  constant speed $c>0$ in the increasing direction of $x$.
 We assume that $A(\xi)$ is a
Liptschitz continuous function on $\mathbb R^1$ satisfying
 \begin{equation}\label{a}
 A(\xi)=
 \begin{cases}
 a, & \xi\leq 0,\\
 a_0, & \xi\geq l_0,
 \end{cases}
\end{equation}
and $A(\xi)$ is strictly monotone over $[0,l_0]$. Here $l_0$, $a_0$ and $a$ are constants, with $l_0>0$. In \cite{DuWeiZhou2018}, it is assumed (essentially) that $a< 0<a_0$, representing the situation that unfavourable environment is shifting into the favourable habitat of the species with speed $c$. In this paper, we assume instead that
\[
a>0>a_0,
\]
and therefore, the environment is changing from unfavourable to favourable at speed $c$.
Moreover, taking into account that the expanding rate of the population range at its spreading front $x=h(t)$ will most likely vary when the environment changes, we modify \eqref{MP} slightly by changing the free boundary condition $h'(t)=-\mu u_x(h(t),t)$ to
\[
h'(t)=-\mu(A(x-c t))u_x(x, t)|_{x=h(t)},
\]
where the function $\mu(\zeta)\ (\zeta\in [a_0, a])$ is assumed to be continuous and  increasing over $[a_0, a]$,  with
    $$0<\mu(a_0)\leq \mu(a).$$

Thus our revised model in this paper has the following form:		
\begin{equation}\label{free boundary equation}
	\left\{\begin{aligned}
	&u_t=du_{xx}+A(x-ct)u-bu^2,&t&>0,\ 0<x<h(t),\\
	&u_x(0,t)=u(h(t),t)=0,  &t&>0,     \\
	&h'(t)=-\mu(A(h(t)-ct))u_x(h(t),t),&t&>0, \\
	&h(0)=h_0,\ u(x,0)=u_0(x), &0&\leq x \leq h_0.
	\end{aligned}\right.\end{equation}

The case that $A(x-ct)$ and $\mu(A(h(t)-ct))$ in \eqref{free boundary equation} are replaced by positive constants $a$ and $\mu(a)$ respectively, which depicts the spreading of the species in a favourable homogeneous environment, was first studied in \cite{DuLin2010}. When spreading happens, it follows from \cite{BDu2012,DuLin2010,DuMZhou2014} that for $\mu=\mu(a)>0$, there exists a unique $c_0\in (0,2\sqrt{ad})$ such that
$$\lim_{t\to \infty} [h(t)-c_0t]=K$$
for some constant $K$, and
$$\lim_{t\to \infty}\left[\sup_{x\in [0,h(t)]} \left|u(x,t)-q_{c_0}(x-h(t))\right|\right]=0,$$
where $q= q_{c_0}(x)$ is the unique positive solution of
$$\left\{\begin{aligned}
&dq''+c_0q'+aq-bq^2=0,\quad x\in(-\infty,0),\\
& q(0)=0,\ -\mu(a) q'(0)=c_0.
\end{aligned}
\right.$$
We note that $q_{c_0}$ is usually called a semi-wave with speed $c_0$, and it has the following properties:
\[
q_{c_0}(-\infty) =\frac{a}{b},\ \; q'_{c_0}(x)<0 \mbox{ for } x\leq 0.
\]

In order to state our main theorems, apart from the  above defined $c_0$ and $q_{c_0}$, we also need the following result on an auxiliary elliptic problem, which supplies a semi-wave to \eqref{free boundary equation} with speed $c$.
\begin{Proposition}\label{v_L}
	Suppose that $0<c\le c_0$. Then we have the following conclusions:
	
	$(\mathrm{i})$\ For any $L\ge0$, the problem
	\begin{equation}\label{L}
	\left\{
	\begin{array}{l}
-dv''-cv'=A(x)v-b{v}^{2},\quad-\infty<x<L,\\
 v(L)=0
	\end{array}
	\right.	\end{equation}
	has a unique positive solution $v_L(x)$. Moreover,  it  satisfies $v_L(-\infty)=\frac{a}{b}$ and $v_L'(x)<0$ for $x\leq L$.
	
	$(\mathrm{ii})$\ The mapping $L\mapsto v_L'(L)$ is strictly increasing for $L\in [0,+\infty)$.
	
	$(\mathrm{iii})$\ There exists a unique $L_0\geq 0$ such that $-\mu(A(L_0))v_{L_0}'(L_0)=c$. Moreover, $L_0=0$ if and only if $c=c_0$, and in such a case, we have $v_{L_0}=v_0\equiv q_{c_0}$.
\end{Proposition}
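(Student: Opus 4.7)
I would treat (i), (ii), (iii) in order. Part (i) is a standard elliptic construction via sub/supersolutions; \emph{part (ii) is the main obstacle}, because the direct linearisation at $v_L$ has a zero-order coefficient of indefinite sign, but I will bypass this with a shift-and-slide comparison exploiting the monotonicity of $A$; part (iii) is then a continuity/monotonicity argument combining (ii) with the classical semi-wave identity at $L=0$ and a decay estimate as $L\to\infty$.

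For (i), I would first solve the Dirichlet problem on a finite interval $[-R,L]$ with data $v(-R)=a/b$ and $v(L)=0$, then pass $R\to\infty$. Since $A$ is strictly decreasing from $a$ on $\{\xi\le 0\}$ to $a_0$ on $\{\xi\ge l_0\}$, we have $A(x)\le a$, so $\bar v\equiv a/b$ is a supersolution. A positive strict subsolution is built from a scaled principal eigenfunction of $-d\partial_x^2-c\partial_x-A$ on a sufficiently long subinterval of $[-R,0]$, whose principal Dirichlet eigenvalue tends to $-a<0$. Monotone iteration together with the standard sweeping argument gives a unique positive solution $v_R\in(0,a/b]$, and interior Schauder estimates yield a $C^2_{\mathrm{loc}}$ limit $v_L$ on $(-\infty,L]$ as $R\to\infty$. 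Comparison on the autonomous tail $x\le 0$ with translates of the semi-wave $q_c$ forces $v_L(-\infty)=a/b$. Finally, $v_L'<0$ follows from the strong maximum principle and Hopf's lemma applied to $w:=v_L'$, which satisfies
\begin{equation*}
-dw''-cw'=[A(x)-2bv_L]\,w+A'(x)\,v_L,
\end{equation*}
with nonpositive source $A'(x)v_L\le 0$ and boundary data $w(L)<0$, $w(-\infty)=0$.

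For (ii), fix $0\le L_1<L_2$ and introduce the shifted function $\tilde v(x):=v_{L_2}(x+L_2-L_1)$ on $(-\infty,L_1]$. Then $\tilde v(L_1)=0$, $\tilde v(-\infty)=a/b$, $\tilde v'(L_1)=v_{L_2}'(L_2)$, and $\tilde v$ satisfies
\begin{equation*}
-d\tilde v''-c\tilde v'=\tilde A(x)\,\tilde v-b\tilde v^2,\qquad \tilde A(x):=A(x+L_2-L_1)\le A(x),
\end{equation*}
with strict inequality $\tilde A<A$ on a non-empty open subset of $(-\infty,L_1]$ (the rightward shift by $L_2-L_1>0$ pushes at least part of the strictly decreasing transition region of $A$ into view). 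I would show $\tilde v\le v_{L_1}$ on $(-\infty,L_1]$ via the sliding parameter $\beta^*:=\sup_{x<L_1}\tilde v(x)/v_{L_1}(x)$: at any interior contact point $x_0$ with $\beta^* v_{L_1}(x_0)=\tilde v(x_0)$, one computes
\begin{equation*}
\bigl(-d\partial_x^2-c\partial_x\bigr)\bigl[\beta^* v_{L_1}-\tilde v\bigr](x_0)=\beta^* v_{L_1}(x_0)\bigl[(A-\tilde A)(x_0)+b\,v_{L_1}(x_0)(\beta^*-1)\bigr],
\end{equation*}
which is strictly positive for $\beta^*>1$, contradicting the second-derivative test; and the alternative that $\beta^*>1$ is attained only as $x\to L_1^-$ is ruled out by Hopf's lemma applied to $\beta^* v_{L_1}-\tilde v\ge 0$ at $x=L_1$ (its boundary derivative would have to be strictly negative, contradicting the L'Hopital identity $\beta^* v_{L_1}'(L_1)=\tilde v'(L_1)$). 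Hence $\beta^*\le 1$, so $\tilde v\le v_{L_1}$ on $(-\infty,L_1]$, and the strong maximum principle together with the strictly positive source $(A-\tilde A)\tilde v$ yields strict inequality in the interior. Hopf's lemma at $L_1$ applied to $z:=v_{L_1}-\tilde v\ge 0$ with $z(L_1)=0$ then gives $z'(L_1)<0$, i.e., $v_{L_1}'(L_1)<\tilde v'(L_1)=v_{L_2}'(L_2)$, which is the desired strict monotonicity.

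For (iii), set $f(L):=-\mu(A(L))\,v_L'(L)$. By (ii) $-v_L'(L)$ is strictly decreasing in $L$; since $A$ is decreasing and $\mu$ is increasing, $\mu(A(L))$ is decreasing in $L$; both factors are positive, so $f$ is continuous and strictly decreasing on $[0,\infty)$. At $L=0$ the equation on $(-\infty,0]$ is autonomous with $A\equiv a$, so $v_0\equiv q_c$; by the standard monotonicity of $c\mapsto -\mu(a)q_c'(0)$ and the definition of $c_0$, one has $-\mu(a)q_c'(0)\ge c$ with equality iff $c=c_0$, hence $f(0)\ge c$ with equality iff $c=c_0$. For $L$ large, $A\equiv a_0<0$ on $[l_0,L]$, and an exponential comparison on this interval (with rate given by the negative root of $d\lambda^2+c\lambda+a_0=0$, which exists since $a_0<0$ makes the discriminant positive) forces $v_L'(L)\to 0$, so $f(L)\to 0$. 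The intermediate value theorem and strict monotonicity of $f$ then give a unique $L_0\ge 0$ with $f(L_0)=c$, and the equivalence $L_0=0\Leftrightarrow c=c_0$ together with $v_{L_0}=q_{c_0}$ in that case are immediate.
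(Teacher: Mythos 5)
Your overall architecture is sound, and your part (ii) is essentially the paper's argument: shift $v_{L_2}$ leftward by $L_2-L_1$, use $A(\cdot+L_2-L_1)\le A$ to compare the shifted function with $v_{L_1}$, then apply the strong maximum principle and Hopf's lemma at $x=L_1$ (the paper outsources your $\beta^*$-sliding step to Lemma 2.1 of Du--Ma, which is proved by exactly that squeezing device). Your part (iii) is also correct and in one respect cleaner than the paper's: the explicit exponential barrier with the negative root of $d\lambda^2+c\lambda+a_0=0$ gives $v_L'(L)\to 0$ more directly than the paper's compactness argument (which extracts a monotone limit $\phi_\infty$ of $v_{L_n}(\cdot+L_n)$ and shows $\phi_\infty\equiv 0$); your identification of $f(0)$ via the known monotonicity of $c\mapsto-\mu(a)q_c'(0)$ replaces the paper's direct comparison of $v_0$ with $q_{c_0}$ as a subsolution, and both are fine.

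There is, however, a genuine gap in (i): you never prove uniqueness of the positive solution of \eqref{L} on the unbounded domain. Your construction yields uniqueness only for the truncated problems on $[-R,L]$ with the particular boundary datum $v(-R)=a/b$, plus existence of a $C^2_{\mathrm{loc}}$ limit; an arbitrary positive solution $V$ of \eqref{L} need not arise as such a limit, so nothing so far forces $V=v_L$. This matters downstream (e.g., in Lemma 3.1 an arbitrary subsequential limit must be identified with $v_L$). The paper closes this by first showing every positive solution satisfies $V(-\infty)=a/b$ and then running the $(1+\epsilon)$-squeezing argument: $(1+\epsilon)v_i$ is a strict supersolution, dominates $v_j$ near $-\infty$, and Du--Ma's comparison lemma gives $(1+\epsilon)v_1\ge v_2$ and $(1+\epsilon)v_2\ge v_1$ on all of $(-\infty,L]$; letting $\epsilon\to 0$ yields $v_1=v_2$. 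You need some version of this. A second, smaller problem: your proof that $v_L'<0$ applies the maximum principle to $w=v_L'$, but the operator $-d\partial_x^2-c\partial_x-(A-2bv_L)$ has precisely the indefinite zero-order coefficient you yourself flagged in the preamble ($A-2bv_L\to A(L)$, possibly positive, as $x\to L^-$ where $v_L\to 0$), so $w\le 0$ does not follow from $w(L)<0$, $w(-\infty)=0$ and the differential inequality alone; you would need a positive supersolution of that operator or a first-integral argument to justify the sign of $w$ on $[0,L]$ (on $(-\infty,0]$ the paper's observation that $(e^{\frac{c}{d}x}V')'<0$ does the job without any linearisation).
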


We  are now ready to describe the main results of this paper, which completely determine the long-time dynamical behaviour of the unique solution $(u,h)$ of  \eqref{free boundary equation}.

\begin{Theorem}\label{c_0>c}
Assume that $0<c<c_0$. Then one of the following must occur:

${\bf (i)}$ \underline{Vanishing:} $\lim\limits_{t\rightarrow\infty}h(t)=h_\infty<\infty$ and
$$\lim_{t\rightarrow\infty}\|u(\cdot, t)\|_{C([0, h(t)])}=0.$$

${\bf (ii)}$ \underline{Spreading with forced speed:} $\lim\limits_{t\rightarrow\infty}[h(t)-ct]=L_0$, and
$$\lim_{t\to +\infty}\| u(\cdot,t)-v_{L_0}(\cdot+L_0-h(t))\|_{L^{\infty}([0,h(t)])}=0,$$
	where $L_0>0$ and $v_{L_0}(x)$ are given in Proposition 1.1.
	\end{Theorem}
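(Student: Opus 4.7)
The plan is to prove the dichotomy via a moving-frame analysis, with the semi-waves $v_L$ from Proposition 1.1 serving as barriers. Since the Hopf boundary lemma gives $u_x(h(t),t)<0$ and $\mu(A(h(t)-ct))>0$, we have $h'(t)>0$, so $h_\infty:=\lim_{t\to\infty}h(t)\in(h_0,+\infty]$ is well defined. If $h_\infty<\infty$, then for $t$ large $A(x-ct)\equiv a$ on $[0,h_\infty]$ (since $ct\to\infty$), so $u$ is asymptotically governed by the logistic problem on $[0,h_\infty]$ with no-flux at $x=0$ and zero Dirichlet at $h_\infty$. As in \cite{DuLin2010}, $h_\infty<\infty$ forces $h_\infty\le\tfrac{\pi}{2}\sqrt{d/a}$ (otherwise the limiting logistic problem would have a positive steady state, giving $u_x(h(t),t)$ bounded away from $0$ and hence $h(t)\to\infty$), and then parabolic comparison yields $\|u(\cdot,t)\|_{C([0,h(t)])}\to 0$.

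Now suppose $h_\infty=\infty$. Passing to the moving frame $\xi=x-ct$, $w(\xi,t)=u(\xi+ct,t)$, $g(t):=h(t)-ct$, the system becomes
\begin{equation*}
w_t=dw_{\xi\xi}+cw_\xi+A(\xi)w-bw^2\quad\text{on }-ct<\xi<g(t),
\end{equation*}
with $w(g(t),t)=0$, $w_\xi(-ct,t)=0$, and $g'(t)=-\mu(A(g(t)))w_\xi(g(t),t)-c$. The stationary solutions on $(-\infty,L)$ with $w(-\infty)=a/b$ are exactly the semi-waves $v_L$, and by Proposition 1.1(iii), $v_L$ induces a stationary solution of the full moving-frame free-boundary problem if and only if $L=L_0$. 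I would first show that $g(t)$ stays in a bounded interval $[L_1,L_2]$ with $0<L_1<L_0<L_2$. For the upper bound, pick any $L_2>L_0$; by Proposition 1.1(ii)--(iii), $c+\mu(A(L_2))v_{L_2}'(L_2)>0$, so $(\bar u,\bar h):=(v_{L_2}(x-ct),\,ct+L_2)$ is a strict super-solution of \eqref{free boundary equation} once $L_2$ is taken large enough that $\bar u(\cdot,0)\ge u_0$ on $[0,h_0]$; comparison then gives $h(t)-ct\le L_2$. For the lower bound, choose any $L_1\in(0,L_0)$; then $c+\mu(A(L_1))v_{L_1}'(L_1)<0$, so a suitably shifted piece of $v_{L_1}$ serves as a sub-solution forcing the free boundary to advance past $ct+L_1$ for all large $t$ (the species inevitably enters and exploits the favourable region $\{\xi<0\}$).

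With $g(t)$ bounded, an $\omega$-limit argument identifies the unique attractor as $L_0$. Given any sequence $t_n\to\infty$, parabolic interior and boundary estimates together with Schauder theory yield a subsequential limit $w_\infty(\xi,t)$ on $\{\xi<g_\infty(t)\}$, solving the moving-frame PDE with the same free-boundary condition and $w_\infty(-\infty,t)=a/b$. A monotonicity (or zero-number) argument shows that $w_\infty$ must be stationary, hence $g_\infty\equiv L'$ for some $L'\in[L_1,L_2]$ and $w_\infty=v_{L'}$; the boundary condition $g_\infty'(t)\equiv 0$ then forces $-\mu(A(L'))v_{L'}'(L')=c$, so $L'=L_0$ by Proposition 1.1(iii). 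Since the limit is independent of the subsequence, $g(t)\to L_0$ and $u(\cdot,t)-v_{L_0}(\cdot+L_0-h(t))\to 0$ uniformly.

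The main obstacle is this last step: ruling out nontrivial $\omega$-limits and pinning the limit of $g(t)$ uniquely at $L_0$. The cleanest route is to combine the strict monotonicity of $L\mapsto v_L'(L)$ (Proposition 1.1(ii)) with a squeezing argument, pinching $w(\cdot,t)$ between translates of $v_{L_0-\delta}$ and $v_{L_0+\delta}$ for arbitrarily small $\delta>0$ and all $t$ large. The handling of the shifting no-flux boundary $\xi=-ct$ in the moving-frame sub- and super-solutions also requires care, since a genuine semi-wave lives on the entire half-line $(-\infty,L)$ while our problem has a moving left endpoint; but this discrepancy is absorbed by the fact that $-ct\to-\infty$ and $v_L'(\xi)\to 0$ as $\xi\to-\infty$.
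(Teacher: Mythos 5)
Your overall skeleton (moving frame, the semi-waves $v_L$ of Proposition \ref{v_L} as barriers, boundedness of $g(t)=h(t)-ct$, then identification of the limit $L_0$) matches the paper's strategy, but three steps as written do not go through. First, the super-solution $(\bar u,\bar h)=(v_{L_2}(x-ct),ct+L_2)$ cannot be made to dominate $u_0$ by enlarging $L_2$: every $v_L$ satisfies $0<v_L\le \frac{a}{b}$, and translating the profile does not raise its supremum, so the construction fails whenever $\|u_0\|_\infty>\frac{a}{b}$. The paper circumvents this by solving the auxiliary two-point problem \eqref{psi} with left boundary value $M=\max\{\frac{a}{b},\|u_0\|_\infty\}$, showing $\psi_{-l,L_1}\to v_{L_1}$ in $C^1_{loc}$ as $l\to\infty$ so that the free-boundary inequality $-\mu(A(L_1))\psi'_{-l,L_1}(L_1)<c$ survives the modification, and then capping the super-solution by the constant $M$ to the left of $-l$. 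Second, your sub-solution ("a suitably shifted piece of $v_{L_1}$") is not specified at its left end: a truncation of $v_{L_1}$ is not a sub-solution where it is cut, and to initialize any compactly supported sub-solution below $u$ you first need the locally uniform convergence $u(x,t)\to\frac{a}{b}$ on compact sets (the paper's \eqref{cuab}), which you never establish. The paper uses instead the Dirichlet problem \eqref{w-lL} with zero data at both ends, placed under $u$ at a large time via \eqref{cuab}.

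Third, and most seriously, the identification of the limit is not proved. Your claim that every subsequential limit is stationary "by a monotonicity (or zero-number) argument" is unsubstantiated — there is no monotonicity in $t$ here, and this is exactly the hard point. The fallback squeezing between translates of $v_{L_0\pm\delta}$ also does not close the argument: because of the translation $r$ needed to dominate the data at the initial time, comparison only yields $\limsup_{t\to\infty}[h(t)-ct]\le L_0+\delta+r$ with $r$ possibly large, i.e.\ boundedness but not sharpness (this is precisely what the paper's Lemmas \ref{-H} and \ref{H-} give, and no more). The paper's actual mechanism is different: it takes limits along sequences $t_n$ realizing $\limsup$ (resp.\ $\liminf$) of $h(t)-ct$, obtains an entire solution $(\hat V,F)$ of the moving-frame free boundary problem with $F$ attaining its supremum (resp.\ infimum) at $t=0$, so that $F'(0)=0$ and $-\mu(A(\bar H))\hat V_x(\bar H,0)=c$; it then traps $\hat V(\cdot,0)$ under (resp.\ over) $\psi_{-l,\bar H}$ and applies the strong maximum principle and Hopf's lemma to contradict the sign of $-\mu(A(\bar H))\psi'_{-l,\bar H}(\bar H)-c$ furnished by the strict monotonicity in Proposition \ref{v_L}(ii). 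Finally, the uniform profile convergence on all of $[0,h(t)]$ requires splitting into $[0,ct-M]$ and $[ct-M,h(t)]$ and proving $u\to\frac{a}{b}$ uniformly on the former (the paper's Lemma \ref{Mtgoeswuq}); your proposal asserts the conclusion without this step.
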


\begin{Theorem}\label{c_0=c}
Suppose that $c=c_0$. Then one of the following must happen:

${\bf (i)}$ \underline{Vanishing:} $\lim\limits_{t\rightarrow\infty}h(t)=h_\infty<\infty$ and
$$\lim_{t\rightarrow\infty}\|u(\cdot, t)\|_{C([0, h(t)])}=0.$$

${\bf (ii)}$ \underline{Spreading:}
There exists a constant $\tilde{C}\le 0$ such that
$$\lim_{t\rightarrow\infty}[h(t)-c_0t]= \tilde{C},$$and
$$\lim_{t\to +\infty}\| u(\cdot,t)-q_{c_0}(\cdot-h(t))\|_{L^{\infty}([0,h(t)])}=0.$$
\end{Theorem}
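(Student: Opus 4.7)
The strategy is to parallel Theorem \ref{c_0>c}, with the new feature that at $c=c_0$ Proposition \ref{v_L}(iii) forces $L_0=0$ and $v_{L_0}=q_{c_0}$, so the ``forced'' and ``natural'' semi-waves coincide. For the dichotomy: $h$ is strictly increasing, so $h_\infty:=\lim h(t)\in(h_0,\infty]$; if $h_\infty<\infty$, then a standard argument using parabolic Schauder estimates up to the moving boundary together with the finiteness of $\int_0^\infty h'(t)\,dt=h_\infty-h_0$ (as in \cite{DuLin2010,DuWeiZhou2018}) gives $\|u(\cdot,t)\|_\infty\to 0$, i.e.\ alternative~(i). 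Henceforth assume spreading, $h_\infty=\infty$. An a priori upper bound on $\eta(t):=h(t)-c_0 t$ comes from comparing $(u,h)$ with the solution $(\bar u,\bar h)$ of \eqref{free boundary equation} in which $A$ and $\mu(A(\cdot))$ are replaced by the constants $a$ and $\mu(a)$: since $A(\xi)\le a$ and $\mu(A(\zeta))\le\mu(a)$, $(\bar u,\bar h)$ is a super-solution, and $\bar h(t)-c_0 t\to K$ by the homogeneous theory of \cite{BDu2012,DuMZhou2014}, giving $\limsup_{t\to\infty}\eta(t)\le K<\infty$.

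The crux is to sharpen this to $\limsup\eta(t)\le 0$. Suppose for contradiction that $\tilde C:=\limsup\eta(t)>0$, and choose $t_n\to\infty$ with $\eta(t_n)\to\tilde C$. In the moving frame $\xi=x-c_0 t$ set $W_n(\xi,s):=u(\xi+c_0(s+t_n),s+t_n)$; parabolic regularity up to the free boundary gives, along a subsequence, $W_n\to W$ locally uniformly, where $W$ is an entire solution of the shifted equation on a domain with right boundary $\eta_\infty(s):=\lim_n\eta(s+t_n)$ satisfying $\eta_\infty(s)\le\tilde C=\eta_\infty(0)$. A strong maximum principle / Hopf lemma argument at the free boundary (the supremum of $\eta_\infty$ is attained at $s=0$) forces $\eta_\infty\equiv\tilde C$ and $W$ to be time-independent. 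The stationary profile $V$ solves $-dV''-c_0 V'=A(\xi)V-bV^2$ on $(-\infty,\tilde C)$ with $V(\tilde C)=0$, is nontrivial by comparison on a large favourable subinterval, and so equals $v_{\tilde C}$ by Proposition \ref{v_L}(i). Passing the free boundary condition to the limit yields $c_0=-\mu(A(\tilde C))v_{\tilde C}'(\tilde C)$. But Proposition \ref{v_L}(ii) gives $v_{\tilde C}'(\tilde C)>v_0'(0)$, while $\mu\circ A$ is nonincreasing (since $A$ is nonincreasing and $\mu$ is increasing); combined with $-\mu(A(0))v_0'(0)=c_0$ this forces the strict inequality $-\mu(A(\tilde C))v_{\tilde C}'(\tilde C)<c_0$, a contradiction. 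Hence $\limsup\eta\le 0$.

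Finally, once $\limsup\eta\le 0$, for large $t$ the right end of the habitat lies in the favourable region $\{A=a\}$, and the problem asymptotically reduces to the homogeneous Fisher--KPP free boundary problem. I would construct sub- and super-solutions of the form $q_{c_0}(\cdot-h(t)-\sigma_\pm(t))$ with carefully chosen shifts $\sigma_\pm(t)$, trap $(u,h)$ between them via the maximum principle, and use the long-time asymptotics of \cite{BDu2012,DuMZhou2014} to force $\sigma_+(t)-\sigma_-(t)\to 0$. This yields simultaneously $\eta(t)\to\tilde C\in(-\infty,0]$ (finiteness of the lower limit coming from an analogous compactness argument ruling out $\liminf\eta=-\infty$) and the uniform profile convergence $\|u(\cdot,t)-q_{c_0}(\cdot-h(t))\|_{L^\infty([0,h(t)])}\to 0$. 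The main obstacle in this program is the rigidity step in the crux paragraph: promoting the subsequential limit $W$ to the time-independent semi-wave $v_{\tilde C}$ requires $C^{1+\alpha}$ control of $u$ up to the free boundary at a point where $A$ is only Lipschitz, and a careful Hopf-type argument on a moving domain.
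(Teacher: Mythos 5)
Your overall architecture matches the paper's: the dichotomy via $h_\infty$, comparison with the homogeneous problem to get $\limsup_{t\to\infty}[h(t)-c_0t]<\infty$, an entire-solution limit plus a Hopf-type argument to exclude $\limsup_{t\to\infty}[h(t)-c_0t]>0$ (using Proposition \ref{v_L}(ii) exactly as you intend), and a final reduction to the homogeneous theory of \cite{DuMZhou2014}. But two steps have genuine gaps. The first is the rigidity step you yourself flag. From $\sup_s\eta_\infty(s)=\eta_\infty(0)=\tilde C$ you only obtain $\eta_\infty'(0)=0$, hence $-\mu(A(\tilde C))W_x(\tilde C,0)=c_0$; nothing forces $\eta_\infty\equiv\tilde C$ or $W$ to be time-independent, and no Liouville classification is available in the moving frame when $\tilde C>0$, where the equation is genuinely heterogeneous. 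The point is that you do not need that rigidity. The paper instead takes the steady state $\psi_{-\bar l,\tilde C}$ of the Dirichlet problem on $[-\bar l,\tilde C]$ with boundary value $M=\max\{a/b,\|u_0\|_\infty\}$ at $-\bar l$, observes that the parabolic solution started from the constant $M$ decreases to $\psi_{-\bar l,\tilde C}$ while dominating $W(\cdot,\cdot-s)$ for every $s>0$, hence $W(\cdot,0)\le\psi_{-\bar l,\tilde C}$; the strong maximum principle and the Hopf lemma applied at the single point $(\tilde C,0)$ then give $\psi'_{-\bar l,\tilde C}(\tilde C)<W_x(\tilde C,0)=-c_0/\mu(A(\tilde C))$, which contradicts $-\mu(A(\tilde C))\psi'_{-\bar l,\tilde C}(\tilde C)<c_0$ (valid for $\bar l$ large since $\psi_{-\bar l,\tilde C}\to v_{\tilde C}$ and $-\mu(A(\tilde C))v'_{\tilde C}(\tilde C)<c_0$ by Proposition \ref{v_L}(ii)). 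This one-sided comparison is the missing idea.

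The second gap is the endgame. Knowing $\limsup_{t\to\infty}[h(t)-c_0t]\le 0$ does \emph{not} place the front in $\{A=a\}$ for large $t$: in the borderline case $\bar H=0$ the front may exceed $c_0t$ along a sequence of times, so the problem is not eventually homogeneous and your trapping by shifted semi-waves does not directly apply. The paper splits into $\bar H<0$ (then $h(t)<c_0t$ eventually, and \cite{DuMZhou2014} applies verbatim) and $\bar H=0$, where the limiting entire solution lives in $\{x\le0\}$, is classified via \cite{DuMZHou2015} as $q_{c_0}(x-F_0)$ with a constant front $F_0\le0$, and the possibility $F_0<0$ is excluded by a separate argument. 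Relatedly, your ``analogous compactness argument'' for $\liminf_{t\to\infty}[h(t)-c_0t]>-\infty$ does not go through: the compactly supported traveling subsolution $w_{-l',L_2}$ used for $c<c_0$ requires $0<L_2<L_0$, which is unavailable here since $L_0=0$ when $c=c_0$. The paper instead runs an auxiliary free boundary problem started below both $u$ and the semi-wave $q_{c_0}(\cdot-c_0t)$; its front then stays behind $c_0t$, sees only the homogeneous medium, and converges to $c_0t+\check H$, yielding the lower bound.
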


\begin{Theorem}\label{c_0<c}
Assume that $c>c_0$. Then either

${\bf (i)}$ \underline{Vanishing:} $\lim\limits_{t\rightarrow\infty}h(t)=h_\infty<\infty$ and
$$\lim_{t\rightarrow\infty}\|u(\cdot, t)\|_{C([0, h(t)])}=0,$$or

${\bf (ii)}$ \underline{Spreading:}
There exists $\check{C}\in\R^1$ such that
$$\lim_{t\rightarrow\infty}[h(t)-c_0t]= \check{C},$$and
$$\lim_{t\to +\infty}\| u(\cdot,t)-q_{c_0}(\cdot-h(t))\|_{L^{\infty}([0,h(t)])}=0.$$
\end{Theorem}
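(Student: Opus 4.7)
Because $c>c_0$ the climate front $x=ct$ runs faster than any speed the population can attain; thus in the spreading case the moving boundary $h(t)$ should eventually fall so far behind $x=ct$ that the whole interval $[0,h(t)]$ lies in the fully favourable zone $\{\xi\le 0\}$ where $A\equiv a$. My plan is to establish this quantitatively and then invoke the classical theory of the autonomous free boundary problem to conclude.

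\textbf{Step 1 (Spreading--vanishing dichotomy).} Following the by-now standard arguments of \cite{DuLin2010,BDu2012,DuWeiZhou2018}, we have $h_\infty:=\lim_{t\to\infty}h(t)\in(h_0,\infty]$; when $h_\infty<\infty$ parabolic estimates together with the structure of \eqref{free boundary equation} force $u(\cdot,t)\to 0$ in $C([0,h(t)])$, which is case (i). Otherwise $h_\infty=+\infty$, and we must prove the convergences in (ii). The shifting coefficient $A(x-ct)$ is bounded above by $a$ and causes no essential difficulty in these arguments.

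\textbf{Step 2 ($h(t)-ct\to-\infty$).} Assume spreading, and let $(\bar u,\bar h)$ denote the solution of the \emph{autonomous} free boundary problem with reaction $a\bar u-b\bar u^2$, Neumann condition at $x=0$, Dirichlet condition at $\bar h(t)$, free--boundary law $\bar h'(t)=-\mu(a)\bar u_x(\bar h(t),t)$, and the same initial data $(u_0,h_0)$. Because $A(\xi)\le a$ pointwise and $\mu(A(\cdot))\le\mu(a)$ (using monotonicity of $\mu$ and the fact that $u_x(h(t),t)\le 0$ by Hopf), a comparison principle for free boundary problems gives $u\le\bar u$ and $h(t)\le\bar h(t)$ for all $t\ge 0$. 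Since spreading holds for $(u,h)$ it also holds for $(\bar u,\bar h)$, and the sharp asymptotics of \cite{BDu2012,DuMZhou2014} yield $\bar h(t)=c_0t+O(1)$. Therefore
\[
h(t)-ct\le\bar h(t)-ct\le-(c-c_0)t+K\longrightarrow-\infty,
\]
so there is $T>0$ with $h(t)-ct<-l_0$ for all $t\ge T$; consequently $A(x-ct)\equiv a$ on $[0,h(t)]$ and $\mu(A(h(t)-ct))=\mu(a)$ for every such $t$.

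\textbf{Step 3 (Reduction to the autonomous problem).} On $[T,\infty)$ the pair $(u,h)$ satisfies \emph{exactly} the autonomous free boundary problem with coefficients $a,b,d,\mu(a)$, starting from the smooth positive profile $u(\cdot,T)$ on $[0,h(T))$ with $u(h(T),T)=0$ and $u_x(0,T)=0$. Since $h(t)\to\infty$, spreading occurs for this autonomous problem, and the convergence results of \cite{DuMZhou2014} (see also \cite{BDu2012}) deliver some $\check C\in\mathbb R$ with $h(t)-c_0t\to\check C$ together with
\[
\bigl\|u(\cdot,t)-q_{c_0}(\cdot-h(t))\bigr\|_{L^\infty([0,h(t)])}\longrightarrow 0\quad\text{as }t\to\infty,
\]
which is (ii).

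\textbf{Main obstacle.} The substantive step is the comparison in Step 2. While the inequality $A\le a$ immediately gives an interior comparison, the free boundary condition requires care, because one does not know a priori that $u_x$ dominates $\bar u_x$ at the fronts. The standard way around this is to argue that at any hypothetical first touching time $t_*$ one would have $h(t_*)=\bar h(t_*)$ and $u(\cdot,t_*)\le\bar u(\cdot,t_*)$ on the common interval with equality $0$ at the common tip; Hopf's lemma then produces $0\ge u_x(h(t_*),t_*)\ge\bar u_x(\bar h(t_*),t_*)$, and combined with $\mu(A(h(t_*)-ct_*))\le\mu(a)$ this yields $h'(t_*)\le\bar h'(t_*)$, contradicting the crossing. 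Once this comparison is established, the remainder of the proof is an application of existing autonomous theory.
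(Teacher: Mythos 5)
Your proposal is correct and follows essentially the same route as the paper: compare with the autonomous problem (constant coefficients $a$ and $\mu(a)$) via the free-boundary comparison principle (the paper's Theorem \ref{comparison principle} already covers the boundary-condition issue you flag, since $\mu(A(\cdot))\le\mu(a)$ and $\bar u_x\le 0$ at the front), deduce that $h(t)-ct$ becomes negative for large $t$ because the front speed is at most $c_0<c$, and then invoke \cite{DuMZhou2014} for the autonomous asymptotics. The only cosmetic difference is that the paper uses $\bar h(t)/t\to c_0$ from \cite{DuLin2010} rather than the sharper $\bar h(t)=c_0t+O(1)$, which suffices equally well.
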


\begin{Theorem}\label{u0=sf} Suppose $c>0$. If $h_{0}\ge \frac{\pi}{2}\sqrt{\frac{d}{a}}$, then vanishing cannot happen and hence spreading always happens. If $h_{0}< \frac{\pi}{2}\sqrt{\frac{d}{a}}$ and  $u_0=\sigma\phi$ for some $\phi$ satisfying \eqref{i}, then there exists $\sigma_0=\sigma_0(h_0,\phi,c) \in (0,+\infty]$ such that vanishing happens if and only if $0<\sigma\le\sigma_0$.
	\end{Theorem}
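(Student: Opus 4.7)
The proof splits along the stated dichotomy. A unifying observation, used in both parts, is that whenever vanishing occurs the finite limit $h_\infty:=\lim_{t\to\infty} h(t)$ is reached while $ct\to\infty$, so for $t$ beyond some $T_*$ we have $x-ct\le -l_0$ throughout $[0,h(t)]$ and hence $A(x-ct)\equiv a$ on the domain. From time $T_*$ onwards the problem reduces to the classical homogeneous logistic free boundary problem with coefficient $a$, whose persistence threshold is the Neumann--Dirichlet principal eigenvalue $d(\pi/(2L))^2$ of $-d\partial_{xx}$ on $(0,L)$.

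\textbf{Part 1 ($h_0\ge\frac{\pi}{2}\sqrt{d/a}$).} I argue by contradiction: suppose vanishing happens. By the Hopf lemma, $h(t)>h_0$ for every $t>0$, so $h_\infty>\frac{\pi}{2}\sqrt{d/a}$ strictly; pick $T_1\ge T_*$ with $L:=h(T_1)>\frac{\pi}{2}\sqrt{d/a}$. On $[0,L]\times[T_1,\infty)$ let $w$ solve $w_t=dw_{xx}+aw-bw^2$ with $w_x(0,t)=w(L,t)=0$ and $w(\cdot,T_1)=u(\cdot,T_1)|_{[0,L]}$. Since $u(L,t)\ge 0=w(L,t)$, comparison yields $u\ge w$. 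Because $d(\pi/(2L))^2<a$, the principal eigenvalue of $-d\partial_{xx}-a$ on $(0,L)$ is strictly negative, so $w(\cdot,t)$ converges to the unique positive steady state, contradicting $\|u(\cdot,t)\|_\infty\to 0$.

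\textbf{Part 2 ($h_0<\frac{\pi}{2}\sqrt{d/a}$, $u_0=\sigma\phi$).} Let $\Sigma=\{\sigma>0:\text{vanishing occurs}\}$. First, $\Sigma\neq\emptyset$: fix $\bar h_\infty\in\bigl(h_0,\frac{\pi}{2}\sqrt{d/a}\bigr)$, let $V$ be the positive first eigenfunction of $-dV''=\lambda V$ on $(0,1)$ with $V'(0)=V(1)=0$, and try the candidate $\bar h(t)=\bar h_\infty-(\bar h_\infty-h_0)e^{-\gamma t}$, $\bar u(x,t)=Me^{-\eta t}V(x/\bar h(t))$. The strict inequality $\lambda/\bar h_\infty^2>a$ permits a choice of $\eta>0$ with $\bar u_t\ge d\bar u_{xx}+a\bar u\ge d\bar u_{xx}+A\bar u-b\bar u^2$; taking $M$ small yields $\bar h'(t)\ge -\mu(A(\bar h(t)-ct))\bar u_x(\bar h(t),t)$ (using $\mu(A(\cdot))\le\mu(a)$); for $\sigma$ sufficiently small $\sigma\phi\le MV(\cdot/h_0)$ on $[0,h_0]$, since the boundary rates $\phi'(h_0)$ and $V'(1)/h_0$ are both strictly negative so the ratio $\phi/V(\cdot/h_0)$ extends to a bounded function on $[0,h_0]$. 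Comparison gives $h(t)\le\bar h(t)<\bar h_\infty$, hence vanishing. Second, $\Sigma$ is downward closed: if $0<\sigma'<\sigma\in\Sigma$, the standard comparison for \eqref{free boundary equation} gives $h_{\sigma'}\le h_\sigma$ and hence $\sigma'\in\Sigma$. Set $\sigma_0:=\sup\Sigma\in(0,+\infty]$. When $\sigma_0<\infty$ I claim $\sigma_0\in\Sigma$: otherwise spreading occurs at $\sigma_0$, so $h_{\sigma_0}(T)>\frac{\pi}{2}\sqrt{d/a}$ for some $T$; continuous dependence of $(u_\sigma,h_\sigma)$ on $\sigma$ over $[0,T]$ produces $\sigma\in\Sigma$ close to $\sigma_0$ with $h_\sigma(T)>\frac{\pi}{2}\sqrt{d/a}$, and then Part 1 applied from time $T$ in place of $0$ shows $u_\sigma$ cannot vanish, a contradiction.

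\textbf{Main obstacle.} The most delicate point is the sharpness claim $\sigma_0\in\Sigma$ at the end of Part 2, which hinges on two ingredients that must be combined carefully: quantitative continuous dependence of the pair $(u_\sigma,h_\sigma)$ on $\sigma$ over compact time intervals (proved via the straightening $y=x/h(t)$ and a fixed-point argument), and the restart of the eigenvalue comparison of Part 1 from an arbitrary time $T$ rather than from $t=0$. Once both are in hand, the strict threshold $\sigma_0$ follows.
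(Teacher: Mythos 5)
Your proof is correct and follows essentially the same route as the paper: reduce to the homogeneous problem with coefficient $a$ once $h(t)<ct$, use the eigenvalue threshold $\frac{\pi}{2}\sqrt{\frac{d}{a}}$, show the vanishing set is a nonempty, downward-closed interval, and obtain sharpness at $\sigma_0$ via continuous dependence combined with the restarted eigenvalue criterion (the paper's Lemma \ref{sad0}). The only difference is presentational: you construct the small-$\sigma$ supersolution and the eigenvalue comparison explicitly, where the paper cites Lemmas 3.1 and 3.8 of \cite{DuLin2010} after comparing with the homogeneous problem.
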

\begin{Remark}
Whether $\sigma_0(h_0,\phi,c) =+\infty$ can actually happen is an open problem.  Some partial answers to this question can be found in \cite{GZ}, where certain sufficient conditions for $\sigma_0(h_0,\phi,c) <+\infty$ are given. For nonlinearities other than the logistic type used in \eqref{free boundary equation}, an example for $\sigma_0(h_0,\phi,c) =+\infty$ can be found in \cite{DuLou}.
\end{Remark}

The rest of the paper is organized as follows. In section 2, we collect some basic results on \eqref{free boundary equation}, which can be proved by similar arguments to those in the existing literature, and we also give the proof of Proposition 1.1, which gives the semi-wave with forced speed $c$, and plays a key role in the long-time behaviour of \eqref{free boundary equation} for the case $c<c_0$.
Sections 3, 4, 5 and 6 are devoted to the proofs of Theorems 1.2, 1.3, 1.4 and 1.5, respectively.
\section{Some basic results}

\subsection{Existence and uniqueness}

The following local existence and uniqueness result can be proved as in \cite{DuLin2010} (see \cite{Wang2019} for some corrections).

\begin{Theorem}\label{local existence}
For any given $u_0$ satisfying \eqref{i} and any $\alpha \in (0,1)$, there is a $T>0$ such that problem \eqref{free boundary equation} admits a unique  solution
$$(u,h)\in C^{1+\alpha,\frac{1+{\alpha }}{2}}(D_T)\times C^{1+\frac{\alpha}{2}}([0,T]);$$
furthermore,
$$\| u \|_{C^{1+\alpha,\frac{1+{\alpha }}{2}}(D_T)}+\|h\|_{C^{1+\frac{\alpha}{2}}([0,T])}\leq C_0,$$
where $D_T=\{(x,t)\in \mathbb{R}^2: x\in [0,h(t)],\ t\in [0,T]\},\ C_0$ and $T$ only depend on $h_0, \alpha$ and $\|u_0\|_{C^2([0,h_0])}$.
\end{Theorem}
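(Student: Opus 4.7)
The plan is to follow the now-standard contraction-mapping strategy used for one-phase Stefan-type free boundary problems, exactly as in \cite{DuLin2010} (with the minor corrections pointed out in \cite{Wang2019}); the only novelty here is that the coefficient in the free boundary law depends on $A(h(t)-ct)$, which, since $A$ is Lipschitz and $\mu$ is continuous and monotone on $[a_0,a]$, only changes some Lipschitz constants and not the structure of the argument.

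First I would straighten the moving domain by setting $y=x/h(t)$ and $w(y,t)=u(x,t)$. The equation on $0<x<h(t)$ becomes a uniformly parabolic equation for $w$ on the fixed cylinder $(0,1)\times(0,T]$ with coefficients that depend on $h$ and $h'$ (and on the shifted source $A(h(t)y-ct)$), with boundary conditions $w_y(0,t)=0$, $w(1,t)=0$, and with the free boundary rule recast as
\[
h'(t)=-\frac{\mu(A(h(t)-ct))}{h(t)}\, w_y(1,t).
\]
Next, given small $T>0$, I would consider the closed set $\mathcal{B}_T$ of functions $h\in C^{1+\alpha/2}([0,T])$ with $h(0)=h_0$, $h'(0)=-\mu(A(h_0))u_0'(h_0)/1$ (after rescaling), and $\|h-h_0\|_{C^{1+\alpha/2}([0,T])}\le \delta$ for a suitably chosen $\delta$. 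For each such $h$, the straightened linear-in-principal-part problem (with the semilinear term $-bw^2$ handled by local boundedness) admits a unique solution $w=w[h]$ in $C^{1+\alpha,(1+\alpha)/2}([0,1]\times[0,T])$ by Schauder theory, with an a priori bound depending only on $h_0$, $\alpha$, and $\|u_0\|_{C^2([0,h_0])}$.

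Then I would define the map $\Gamma:h\mapsto\tilde h$ by
\[
\tilde h(t)=h_0-\int_0^t\frac{\mu(A(h(s)-cs))}{h(s)}\,w[h]_y(1,s)\,ds,
\]
and verify that, for $T$ sufficiently small, $\Gamma$ sends $\mathcal{B}_T$ into itself and is a contraction with respect to the $C^{1+\alpha/2}$-norm. The fixed point $h^\ast$ yields, together with $u^\ast(x,t):=w[h^\ast](x/h^\ast(t),t)$, the desired solution pair $(u,h)$ on $[0,T]$, and uniqueness follows by running $\Gamma$ on any two candidate solutions. The Hopf lemma (plus $u_0'(h_0)<0$) guarantees $h'(0)>0$, so the free boundary actually moves and the construction is consistent.

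The main obstacle is the estimate on $w_y(1,\cdot)$ that is needed to put $\tilde h$ back into $C^{1+\alpha/2}$: one must verify that $w_y(1,t)$ lies in $C^{\alpha/2}([0,T])$ with a norm depending only on the a priori data (and not on the particular $h\in\mathcal{B}_T$), and that the map $h\mapsto w[h]_y(1,\cdot)$ is Lipschitz in a suitably weak norm. This is exactly where the Schauder estimates up to the boundary $\{y=1\}$ enter, and where the dependence of the coefficients on $h,h'$ must be tracked carefully; because $A$ is only Lipschitz (not $C^\alpha$) the extra shifted-source term $A(h(t)y-ct)$ must be shown to give a Hölder source in the straightened problem, which it does since $y\in[0,1]$ and $h,c\in C^{1+\alpha/2}$. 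Once this estimate is in hand, the contraction step (and hence local existence and uniqueness) follows by routine manipulations identical to those in \cite{DuLin2010, Wang2019}.
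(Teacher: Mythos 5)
Your proposal follows exactly the route the paper relies on: the paper gives no proof of Theorem \ref{local existence} but simply invokes the contraction-mapping argument of \cite{DuLin2010} (with the corrections of \cite{Wang2019}), which is the straightening-plus-fixed-point scheme you describe, and your observation that the only new ingredient is the Lipschitz/monotone dependence of the boundary coefficient $\mu(A(h(t)-ct))$ is precisely the point that makes the citation legitimate. This is essentially the same approach, correctly outlined.
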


To prove that the local solution acquired in Theorem \ref{local existence} can be extended to all $t>0$, we need the following estimates.

\begin{Lemma}\label{estimate}
Let $(u,h)$ be a  solution to problem \eqref{free boundary equation} defined for $t\in (0,T_0)$ for some $T_0\in (0,+\infty]$. Then there exist constants $C_1$ and $C_2$ independent of $T_0$ such that
$$0<u(x,t)\leq C_1,\ 0<h'(t)\leq C_2\ \text{for}\ 0\leq x<h(t)\ \text{and}\ t\in (0,T_0).$$
\end{Lemma}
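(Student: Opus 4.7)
The plan has three stages: prove $u>0$ with a uniform upper bound $u\le C_1$, infer $h'(t)>0$ from Hopf's lemma, and then use a parabolic barrier near the free boundary to bound $h'(t)$ from above.

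For the bound on $u$, the key observation is that under the hypothesis $a>0>a_0$ together with the strict monotonicity of $A$ on $[0,l_0]$, the function $A$ satisfies $A(\xi)\le a$ on all of $\mathbb{R}$. Hence the constant function $\overline u\equiv C_1:=\max\{a/b,\|u_0\|_{L^\infty}\}$ is a supersolution of the equation, since $\overline u_t-d\overline u_{xx}-A(x-ct)\overline u+b\overline u^2 \ge C_1(bC_1-a)\ge 0$. Comparing $u$ and $\overline u$ on the mixed boundary $\{x=0,\,x=h(t),\,t=0\}$ (where $\overline u\ge u$ in each case), the standard comparison principle for \eqref{free boundary equation} yields $u\le C_1$. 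Positivity of $u$ follows from the strong maximum principle using $u_0>0$ on $[0,h_0)$, and Hopf's lemma at the free boundary then gives $u_x(h(t),t)<0$, so $h'(t)=-\mu(A(h(t)-ct))u_x(h(t),t)>0$ because $\mu(A(\cdot))\ge\mu(a_0)>0$.

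The main work is the upper bound on $h'(t)$, which I would prove via the local barrier introduced in \cite{DuLin2010}. Define, for a parameter $M>0$ to be chosen,
\[
w(x,t)=C_1\bigl[2M(h(t)-x)-M^2(h(t)-x)^2\bigr]
\]
on the strip $\Omega_M=\{(x,t):h(t)-M^{-1}\le x\le h(t),\ 0\le t<T_0\}$. A direct computation gives $w_{xx}=-2C_1M^2$ and $w_t=2C_1Mh'(t)[1-M(h(t)-x)]\ge 0$, since $h'(t)>0$ and $M(h(t)-x)\le 1$ in $\Omega_M$. Because $w\le 2C_1$ in $\Omega_M$ and $A\le a$, we have
\[
w_t-dw_{xx}\ge 2dC_1M^2\ge 2aC_1\ge Aw\ge Aw-bw^2
\]
as long as $M\ge\sqrt{a/d}$. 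On the parabolic boundary of $\Omega_M$: $w=0=u$ at $x=h(t)$; $w=C_1\ge u$ at $x=h(t)-M^{-1}$; and at $t=0$, using $u_0(h_0)=0$ and the mean value theorem, $u_0(x)\le\|u_0'\|_{L^\infty}(h_0-x)$, while $w(x,0)\ge C_1M(h_0-x)$ for $h_0-M^{-1}\le x\le h_0$. Thus fixing
\[
M:=\max\bigl\{\sqrt{a/d},\ \|u_0'\|_{L^\infty}/C_1,\ 1/h_0\bigr\},
\]
the comparison principle yields $u\le w$ in $\Omega_M$. Since $u(h(t),t)=w(h(t),t)=0$, the function $u-w$ attains its maximum along $x=h(t)$, so $(u-w)_x(h(t),t)\ge 0$, i.e.\ $u_x(h(t),t)\ge w_x(h(t),t)=-2C_1M$. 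Then $h'(t)\le \mu(a)\cdot 2C_1 M=:C_2$, which is independent of $T_0$.

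The main obstacle is the initial-time matching of the barrier: one must ensure $w(x,0)\ge u_0(x)$ in the small strip $[h_0-M^{-1},h_0]$, which is precisely where the geometric structure of $w$ (linear leading order in $h_0-x$ with coefficient $2C_1M$) must beat the linear decay of $u_0$ at $h_0$. This forces $M$ to depend on $\|u_0'\|_{L^\infty}$; every other ingredient is a routine verification of the comparison principle and Hopf-type boundary estimate.
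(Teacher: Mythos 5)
Your proof is correct, and it is essentially the argument the paper intends: the paper gives no details for this lemma and simply refers to the corresponding result in \cite{DuLin2010}, whose standard proof (constant supersolution $\max\{a/b,\|u_0\|_\infty\}$, Hopf's lemma for $h'>0$, and the quadratic barrier $w=C_1[2M(h(t)-x)-M^2(h(t)-x)^2]$ near the free boundary) is exactly what you reproduce, with the correct adaptations $A(\xi)\le a$ and $\mu(A(\cdot))\le\mu(a)$ needed for the shifting-environment setting.
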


The proof of Lemma \ref{estimate} is similar to the corresponding result in \cite{DuLin2010}. It follows from $h'(t)>0$ that $h_{\infty}:=\lim\limits_{t\to +\infty}h(t)\in (h_0,+\infty]$ is well defined.

Combining Theorem \ref{local existence} with Lemma \ref{estimate}, as in \cite{DuLin2010}, we obtain the following global existence result.

\begin{Theorem}
The solution of problem \eqref{free boundary equation} is defined for all $t\in (0,\infty)$.
\end{Theorem}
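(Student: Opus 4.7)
The plan is the standard continuation argument for free boundary problems, combining the local existence result (Theorem \ref{local existence}) with the a priori bounds from Lemma \ref{estimate}. Let $(u,h)$ be the unique local solution given by Theorem \ref{local existence}, and let $T_{\max}\in(0,+\infty]$ denote its maximal existence time. The goal is to show $T_{\max}=+\infty$, and I would argue by contradiction.

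Assume $T_{\max}<+\infty$. Applied to the solution on $[0,T_{\max})$, Lemma \ref{estimate} yields constants $C_1,C_2$ independent of $T_{\max}$ with
\[
0<u(x,t)\le C_1,\qquad 0<h'(t)\le C_2\qquad\text{for }0\le x<h(t),\ t\in[0,T_{\max}),
\]
so $h_0\le h(t)\le h_0+C_2 T_{\max}=:H^*$ on $[0,T_{\max})$. To reapply Theorem \ref{local existence} at a new initial time $t_0$ close to $T_{\max}$, I must first establish a uniform bound on $\|u(\cdot,t_0)\|_{C^2([0,h(t_0)])}$. Introducing the change of variable $y=x/h(s)$ on a time strip of fixed length $[t_0-\delta,t_0]$, the function $\tilde u(y,s):=u(y\,h(s),s)$ satisfies a uniformly parabolic equation on the fixed cylinder $[0,1]\times[t_0-\delta,t_0]$ with bounded coefficients, thanks to the $L^\infty$-bound on $u$, the a priori bound $h'\le C_2$, and the lower bound $h\ge h_0$. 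Standard interior and boundary Schauder estimates then deliver $\|u(\cdot,t_0)\|_{C^{2+\alpha}([0,h(t_0)])}\le C_3$, where $C_3$ depends only on $h_0$, $H^*$, $C_1$, $C_2$, $\alpha$, and the data $d,b,A,\mu$, and in particular is independent of $t_0$ in a left-neighbourhood of $T_{\max}$.

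The restart is then routine. The function $u(\cdot,t_0)$ satisfies the analogue of \eqref{i} with $h_0$ replaced by $h(t_0)$: $u_x(0,t_0)=0$ and $u(h(t_0),t_0)=0$ come from the boundary conditions in \eqref{free boundary equation}, $u(\cdot,t_0)>0$ on $[0,h(t_0))$ from the strong maximum principle, and $u_x(h(t_0),t_0)<0$ from the Hopf lemma. Hence Theorem \ref{local existence}, reapplied at initial time $t_0$, produces a continuation on $[t_0,t_0+\tau]$ with $\tau>0$ depending only on $h(t_0)\le H^*$, $\alpha$, and $\|u(\cdot,t_0)\|_{C^2}\le C_3$; crucially, $\tau$ may be chosen uniformly in such $t_0$. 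Picking $t_0>T_{\max}-\tau/2$ then yields a solution extending past $T_{\max}$, contradicting its maximality and giving $T_{\max}=+\infty$. The only non-routine step in the whole plan is the uniform $C^{2+\alpha}$ estimate for $u(\cdot,t_0)$ independent of $t_0$ close to $T_{\max}$ --- this is the obstacle where care with the moving boundary is needed; everything else is an elementary assembly of Theorem \ref{local existence}, Lemma \ref{estimate}, and classical maximum-principle facts.
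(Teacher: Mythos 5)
Your proposal is correct and follows exactly the route the paper intends: the paper gives no details, simply stating that global existence follows by combining Theorem \ref{local existence} with Lemma \ref{estimate} ``as in \cite{DuLin2010}'', and the argument in that reference is precisely your continuation scheme (a priori bounds, uniform $C^{2+\alpha}$ regularity up to $T_{\max}$ via straightening the domain and parabolic estimates, then restarting with a uniform existence time). You have correctly identified the only delicate point, namely the uniformity of the $C^2$ bound on $u(\cdot,t_0)$ as $t_0\to T_{\max}^-$, which is exactly what the cited reference supplies.
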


\subsection{Comparison principle}

We give a comparison principle for the free boundary problem, which can be proved similarly as Lemma 3.5 in \cite{DuLin2010}.

\begin{Theorem}\label{comparison principle}
	Suppose that $T\in (0,+\infty),\ \overline{h}\in C^1([0,T]),\ \overline{u}\in C(\overline{D}^{*}_T)\cap C^{2,1}(D^{*}_T)$ with
 $D^{*}_T=\{(x,t)\in \mathbb{R}^2: 0<x<\overline{h}(t),\ 0<t\leq T\}$, and
$$	\left\{
	\begin{aligned}
	&\overline{u}_t\geq d\overline{u}_{xx}+A(x-ct)\overline{u}-b\overline{u}^2,&0&<t\leq T,\ 0<x<\overline{h}(t),\\
	&\overline{u}_x(0,t)\leq 0,\ \overline{u}(\overline{h}(t),t)=0,  &0&<t\leq T,     \\
	&\overline{h}'(t)\geq -\mu(A(\overline{h}(t)-ct))\overline{u}_x(\overline{h}(t),t), &0&<t\leq T.
	\end{aligned}
	\right.	$$
If $$h(0)\leq \overline{h}_0 \quad \mbox{and}\quad u_0(x)\leq \overline{u}(x,0) \ \mbox{in} \  [0,h_0],$$
then the solution $(u,h)$ of the problem \eqref{free boundary equation} satisfies
$$h(t)\leq \overline{h}(t)\ \mbox{in}\ (0,T],\quad u(x,t)\leq \overline{u}(x,t)\ \mbox{for}\ x\in (0,h(t))\ \mbox{and}\ t\in (0,T].$$
\end{Theorem}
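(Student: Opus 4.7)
My plan is to follow the classical strategy for comparison principles in one-dimensional free boundary problems (cf.\ Lemma 3.5 in \cite{DuLin2010}): first show $h(t)\le\overline h(t)$ on $[0,T]$, and then derive $u\le\overline u$ from a linear parabolic comparison on the narrower domain $\{0<x<h(t)\}$ using the boundary datum $u(h(t),t)=0\le\overline u(h(t),t)$. As a preliminary reduction I may assume the strict initial inequality $h_0<\overline h(0)$; the equality case is recovered by replacing $\overline h(t)$ with $\overline h(t)+\varepsilon(1+t)$, extending $\overline u$ suitably to the larger domain, and passing to the limit $\varepsilon\to 0$. Throughout I would use that $\overline u\ge 0$ on $\overline{D}^{*}_T$, which follows from the classical maximum principle applied to $\overline u$ together with its boundary conditions.

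Suppose for contradiction that $h\le\overline h$ fails, and set
\[
t^{*}:=\inf\bigl\{t\in(0,T]:h(t)=\overline h(t)\bigr\}\in(0,T],
\]
so $h(t)<\overline h(t)$ for $t\in[0,t^{*})$ and $h(t^{*})=\overline h(t^{*})$. On $\Omega:=\{(x,t):0<x<h(t),\,0<t\le t^{*}\}\subset D^{*}_{t^{*}}$ both $u$ and $\overline u$ are classical, and $w:=\overline u-u$ satisfies the linear parabolic inequality
\[
w_{t}-dw_{xx}+\bigl[b(u+\overline u)-A(x-ct)\bigr]w\ge 0
\]
with bounded coefficients, together with $w(x,0)\ge 0$ on $[0,h_0]$, the oblique condition $w_{x}(0,t)\le 0$ (using $u_{x}(0,t)=0$), and $w(h(t),t)=\overline u(h(t),t)\ge 0$. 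For $t\in[0,t^{*})$ the point $(h(t),t)$ lies strictly inside $D^{*}_{t^{*}}$, so the strong maximum principle applied to the nontrivial nonnegative supersolution $\overline u$ gives $\overline u(h(t),t)>0$; consequently the strong maximum principle applied to $w$ yields $w>0$ in the parabolic interior of $\Omega$.

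At time $t^{*}$ we have $w(h(t^{*}),t^{*})=0$, so Hopf's lemma (applicable because $h\in C^{1+\alpha/2}$ provides an interior-ball condition at the curved lateral boundary) gives $w_{x}(h(t^{*}),t^{*})<0$, equivalently $-\overline u_{x}(h(t^{*}),t^{*})>-u_{x}(h(t^{*}),t^{*})$. Since $\mu(\cdot)>0$ and $h(t^{*})-ct^{*}=\overline h(t^{*})-ct^{*}$,
\[
\overline h'(t^{*})\ge -\mu\bigl(A(\overline h(t^{*})-ct^{*})\bigr)\overline u_{x}(\overline h(t^{*}),t^{*})>-\mu\bigl(A(h(t^{*})-ct^{*})\bigr)u_{x}(h(t^{*}),t^{*})=h'(t^{*}).
\]
On the other hand, since $h-\overline h<0$ on $[0,t^{*})$ with $(h-\overline h)(t^{*})=0$, one must have $(h-\overline h)'(t^{*})\ge 0$, contradicting $h'(t^{*})<\overline h'(t^{*})$. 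Hence $h\le\overline h$ on $[0,T]$, and a routine linear parabolic comparison on $\{0<x<h(t),\,0<t\le T\}$ then yields $u\le\overline u$.

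The main technical obstacle is this touching-time analysis: one needs the Hopf/strong maximum principle at the curved lateral boundary $\{x=h(t)\}$, which requires the regularity of $h$ inherited from Theorem~\ref{local existence}, and one must rule out the degenerate case $h_0=\overline h(0)$ by the perturbation step above (since the contradiction argument above crucially uses a strict gap at $t=0$). The simplification that makes the $\mu(A(\cdot))$ factors harmless at $t^{*}$ is precisely that $h(t^{*})=\overline h(t^{*})$, so the velocity comparison reduces to a pure sign comparison of boundary derivatives.
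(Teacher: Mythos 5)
The paper itself gives no argument for this theorem beyond the remark that it ``can be proved similarly as Lemma 3.5 in \cite{DuLin2010}'', and your proof is precisely a reconstruction of that standard argument: reduce to a strict initial gap, take the first touching time $t^{*}$ of $h$ and $\overline{h}$, compare $u$ and $\overline{u}$ on $\{0<x<h(t)\}$, and use the strong maximum principle plus the Hopf lemma at $(h(t^{*}),t^{*})$ to contradict $(h-\overline{h})'(t^{*})\ge 0$. So in substance you are following the same route as the paper's (cited) proof, and the core of your argument is correct. The one step I would not accept as written is the reduction to $h_0<\overline{h}(0)$: replacing $\overline{h}$ by $\overline{h}+\varepsilon(1+t)$ and ``extending $\overline{u}$ suitably'' is problematic, because the only extension compatible with the new zero boundary condition is essentially extension by $0$, and since $\overline{u}_x(\overline{h}(t)^{-},t)\le 0$ while the extension has $\overline{u}_x\equiv 0$ to the right, the derivative jumps \emph{up} across $x=\overline{h}(t)$; this convex kink puts a negative singular measure into $-d\,\overline{u}_{xx}$ and destroys the supersolution inequality even in the weak sense. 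The standard (and correct) way to handle the equality case --- the one used in \cite{DuLin2010} --- is to perturb the \emph{solution} side instead: replace $h_0$ by $h_0(1-\delta)$, $u_0$ by a slightly smaller admissible initial function, and $\mu$ by $(1-\delta)\mu$, apply the strict-gap argument to $(u_\delta,h_\delta)$ versus $(\overline{u},\overline{h})$, and then let $\delta\to 0$ using continuous dependence of the solution on its data. With that substitution your proof is complete.
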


\begin{Remark} $(\overline{u},\overline{h})$ is called a supersolution (or an upper solution) to problem \eqref{free boundary equation}. A subsolution (or a lower solution) can be defined analogously by reserving all the inequalities, and a similar comparison principle holds. Theorem \ref{comparison principle} has a few obvious variations with similar proofs, which may also be used in the paper.
\end{Remark}

\subsection{The case of vanishing}

\begin{Theorem}\label{0xiaoyucvanishing}
Suppose that $c>0$. If $h_{\infty}<+\infty$, then
	$$\lim_{t\to +\infty}\|u(\cdot,t)\|_{C([0,h(t)])}=0.$$
\end{Theorem}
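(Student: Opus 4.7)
The plan is to argue by contradiction: assume $\limsup_{t\to\infty} \|u(\cdot,t)\|_{C([0,h(t)])} > 0$, so that there exist $\delta > 0$ and sequences $t_n \to \infty$, $x_n \in [0, h(t_n)]$ with $u(x_n, t_n) \ge \delta$; after passing to a subsequence, $x_n \to x_\infty$, and the uniform Lipschitz bound on $u$ in $x$ together with $u(h(t_n), t_n) = 0$ forces $x_\infty \in [0, h_\infty)$. I would build a nontrivial nonnegative limit profile $U$ by translating $u$ in time, identify the equation it solves on $[0, h_\infty]$, and show that it satisfies both $U(h_\infty, t) = 0$ and $U_x(h_\infty, t) = 0$; the Hopf boundary-point lemma will then deliver the contradiction.

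For the limit construction I would first establish uniform regularity up to the free boundary. The standard straightening transformation $y = x/h(t)$ fixes the boundary on $[0,1]$, and combined with Lemma \ref{estimate} and interior parabolic Schauder estimates it yields uniform $C^{2+\alpha,1+\alpha/2}$ bounds on $u$ and $C^{1+\alpha/2}$ bounds on $h$ on each time window $[t,t+1]$, independent of $t \ge 0$. Setting $u_n(x,t) := u(x, t + t_n)$ (extended by $0$ on $(h(t+t_n), h_\infty]$), and noting that $h$ bounded monotone gives $h(t+t_n) \to h_\infty$ locally uniformly in $t$, Arzel\`a--Ascoli provides a subsequence along which $u_n \to U$ locally uniformly in $C^{2,1}$ on $(0, h_\infty) \times \mathbb{R}$, with $U \ge 0$ and $U(x_\infty, 0) \ge \delta$. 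Since $c > 0$, for each fixed $(x,t)$ one has $x - c(t+t_n) \to -\infty$, so $A(x - c(t+t_n)) \to a$ by \eqref{a}; passing to the limit,
\begin{equation*}
U_t = dU_{xx} + aU - bU^2 \text{ in } (0, h_\infty) \times \mathbb{R}, \qquad U_x(0,t) = 0, \qquad U(h_\infty, t) = 0.
\end{equation*}

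The second boundary condition at $x = h_\infty$ comes from $h_\infty < \infty$: since $h$ is bounded and monotone, $\int_0^\infty h'(s)\,ds = h_\infty - h_0 < \infty$, and combined with the uniform H\"older continuity of $h'$ from the previous paragraph this forces $h'(t) \to 0$. As $\mu(A(\cdot)) \ge \mu(a_0) > 0$, the free boundary condition then yields $u_x(h(t), t) \to 0$, so in the limit $U_x(h_\infty, t) \equiv 0$. Finally, $U(x_\infty, 0) \ge \delta > 0$ with $x_\infty < h_\infty$, so the parabolic strong maximum principle gives $U > 0$ in $(0, h_\infty) \times (0, \infty)$; the Hopf boundary-point lemma at $(h_\infty, t)$ for any $t > 0$ then yields $U_x(h_\infty, t) < 0$, contradicting $U_x(h_\infty, t) = 0$. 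The main technical obstacle is the uniform regularity up to the moving boundary that underpins both the extraction of a $C^{2,1}$ limit and the passage to the limit in the free boundary condition; this is handled by the standard straightening trick.
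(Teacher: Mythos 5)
Your proposal is correct and is in substance the same argument as the paper's: the paper observes that $h_\infty<\infty$ and $c>0$ force $h(t)<ct$ for large $t$, so the equation becomes the autonomous one with coefficient $a$ and expansion rate $\mu(a)$, and then cites Lemma 3.1 of \cite{DuLin2010}, whose proof is exactly the limiting/Hopf-lemma/$h'(t)\to 0$ contradiction you write out in detail. So you have essentially unpacked the cited lemma rather than taken a different route.
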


\begin{proof}
Due to $h_{\infty}<+\infty$, there exists $\hat{T}>0$ such that $h(t)<ct$ for $t>\hat{T}$. Therefore, for $t>\hat{T}$,
$$A(x-ct)=a\ \mbox{for}\ x\in[0,h(t)],\quad \mbox{and}\quad \mu(A(h(t)-ct))=\mu(a).$$ By the argument in Lemma 3.1 of \cite{DuLin2010}, we deduce that $$\lim_{t\to +\infty}\|u(\cdot,t)\|_{C([0,h(t)])}=0.$$\end{proof}

\subsection{Proof of Proposition 1.1}

	Define
	$$	\underline{v}(x)=\left\{
	\begin{aligned}	
	&q_{c_0}(x),&-&\infty<x<0,\\
	&0,&0&\le x \le L.	
	\end{aligned}\right.$$	
	Due to $0<c\leq c_0$ and $q_{c_0}'(x)< 0$ for $x\leq 0$, it is easily checked that $\underline{v}$ is a lower solution of problem \eqref{L}. Clearly, $\frac{a}{b}$ is an upper solution. By the standard upper and lower solutions argument over an unbounded domain, problem \eqref{L} admits at least one solution $v_L(x)$ satisfying
	$$\underline{v}(x)\le  v_L(x)\le \frac{a}{b}\ \text{in}\ (-\infty,L].$$
For any nontrivial nonnegative solution $V(x)$ of problem \eqref{L}, it follows from the strong maximum principle and Serrin's sweeping argument that $0<V(x)<\frac{a}{b}$ for $x\in (-\infty,L)$.	

We claim that any positive solution $V$ of \eqref{L} satisfies $V(-\infty)=\frac{a}{b}$. Indeed, $$-dV''-cV'=A(x)V-bV^2=aV-bV^2>0\ \text{for}\ x<0,$$
	and hence
	$$(e^{\frac{c}{d}x}V')'= \frac{1}{d}e^{\frac{c}{d}x}(dV''+cV')< 0\ \text{for}\ x<0.$$
	It follows that $e^{\frac{c}{d}x}V'(x)$ is decreasing in $(-\infty,0]$. Since $V$ is bounded, there exists a sequence  $\{x_n\}$ such that $\lim\limits_{n \to +\infty}x_n=-\infty$ and $\lim\limits_{n \to +\infty}V'(x_n)=0$.	Thus,
	$$e^{\frac{c}{d}x}V'(x)< \lim_{n\to +\infty}e^{\frac{c}{d}x_n}V'(x_n)=0\ \text{for}\ x\in (-\infty,0).$$
Hence $V(x)$ is decreasing in $(-\infty,0]$ and $m=\lim\limits_{x \to -\infty}V(x)$ exists. Applying \eqref{L}, we easily obtain $m=\frac{a}{b}$.

	We now prove the uniqueness. Assume that $v_1, v_2$ are two positive solutions of problem \eqref{L}. For any $\epsilon>0$, let $U_i:=(1+\epsilon)v_i,\ i=1,2;$ then it is evident that $$-dU_i''-cU_i'-A(x)U_i+bU_i^2>0\ \text{in}\ (-\infty,L).$$
	Since $\lim\limits_{x\to -\infty}U_i(x)=(1+\epsilon)\frac{a}{b}$ and $\lim\limits_{x\to -\infty}v_i(x)=\frac{a}{b},\ i=1,2$, there exists $l_1>0$ large such that$$U_1(-l)>v_2(-l),\quad U_2(-l)>v_1(-l),\quad  l\ge l_1.$$ It follows from Lemma 2.1 of \cite{DuMa2001} that
	$$(1+\epsilon)v_1(x)\ge v_2(x),\quad (1+\epsilon)v_2(x)\ge v_1(x)\ \mbox{for}\ -l<x\le L,\ l\ge l_1.$$
	Therefore
	$$(1+\epsilon)v_1(x)\ge v_2(x),\quad (1+\epsilon)v_2(x)\ge v_1(x)\ \mbox{for all}\ x\le L.$$
	Letting $\epsilon \to 0$, we deduce that $v_1=v_2$. This implies that $v_L(x)$ is the unique positive solution of problem \eqref{L}.
	
	We next prove conclusion (ii).
	It suffices to prove that $$v_{\bar{L}_1}'(\bar{L}_1)<v_{\bar{L}_2}'(\bar{L}_2)\ \mbox{for}\ 0\le \bar{L}_1<\bar{L}_2.$$	
	Define $v_2(x):=v_{\bar{L}_2}(x+\bar{L}_2-\bar{L}_1)$; then $v_2$ satisfies
	$$\left\{\begin{array}{l}
-dv_2''-cv_2'=A(x+\bar{L}_2-\bar{L}_1)v_2-bv_2^2,\quad -\infty<x<\bar{L}_1,\\
	v_2(-\infty)=\frac{a}{b},\quad v_2(\bar{L}_1)=0.
	\end{array}	\right.	$$
	Due to $A(x+\bar{L}_2-\bar{L}_1)\le A(x)$, by Lemma 2.1 of \cite{DuMa2001}, we obtain
	 $$v_{\bar{L}_1}(x)\ge v_2(x),\quad x\in (-\infty,\bar{L}_1].$$
	Then using strong maximum principle and the Hopf lemma, we conclude that $v_{\bar{L}_1}'(\bar{L}_1)<v_2'(\bar{L}_1)=v_{\bar{L}_2}'(\bar{L}_2)$.
	
	Next, we show that $\lim\limits_{L\to +\infty}v_L'(L)=0$. Define $\phi_L(x):=v_L(x+L)$, then
	\begin{equation*}
	\left\{
	\begin{array}{l}
-d\phi_L''-c\phi_L'=A(x+L)\phi_L-b\phi_L^2,\quad x<0,\\
	\phi_L(-\infty)=\frac{a}{b},\quad \phi_L(0)=0.
	\end{array}
	\right.
	\end{equation*}
	Let $\{L_n\}$ be an increasing sequence that converges to $+\infty$ and $\hat{w}_n(x):=\phi_{L_n}(x)$.
	Applying the comparison principle (see \cite{DuMa2001}), we obtain $$\hat{w}_{n+1}(x)\le\hat{w}_n(x)\ \text{for}\ x\le0.$$
	Owing to $0\le \hat{w}_n(x)\le \frac{a}{b},\ \phi_{\infty}(x):=\lim\limits_{n\to +\infty}\hat{w}_n(x)$ is well-defined on $(-\infty,0]$.
	By $L^p$ theory, Sobolev embeddings, there exists a subsequence of $\{\hat{w}_n\}$ denoted still by $\{\hat{w}_n\}$ such that $\hat{w}_n(x) \to \phi_{\infty}(x)\ \text{in}\ C_{loc}^{1+\alpha}((-\infty,0])$ for some $\alpha\in(0,1)$, and that $\phi_{\infty}(x)$ satisfies
	\begin{equation*}
	\left\{
	\begin{array}{l}
-d\phi_{\infty}''-c\phi_{\infty}'=a_0\phi_{\infty}-b\phi_{\infty}^2,\ \phi_{\infty}\ge 0\ \mbox{for}\ x<0,\\
	\phi_{\infty}(0)=0.
	\end{array}
	\right.
	\end{equation*}
	
	We claim that $\phi_{\infty}(x)\to 0$ as $x\to -\infty$. By  standard regularity consideration, $\phi_{\infty}\in C^{2}((-\infty,0])$ and
	$$d\phi_{\infty}''+c\phi_{\infty}'\ge b\phi^2_{\infty}\ \text{for}\ x<0.$$
	Then
	$$(e^{\frac{c}{d}x}\phi_{\infty}')'\ge \frac{b}{d}e^{\frac{c}{d}x}\phi_{\infty}^2\geq 0\ \text{for}\ x<0.$$
	From this, $e^{\frac{c}{d}x}\phi_{\infty}'(x)$ is nondecreasing in $(-\infty,0]$. As $\phi_{\infty}$ is bounded, we can find a sequence $\{y_n\}$  such that
		$$y_n\to -\infty,\;\; \phi_{\infty}'(y_n)\to 0\ \text{as}\ n\to +\infty.$$
	Therefore
	$$e^{\frac{c}{d}x}\phi_{\infty}'(x)\geq \lim_{n\to +\infty}e^{\frac{c}{d}y_n}\phi'_{\infty}(y_n)=0\ \text{for every}\ x\in (-\infty,0).$$
	Hence, we obtain $\phi_{\infty}'(x)\geq 0\ \text{in}\ (-\infty,0),$ i.e. $\phi_{\infty}(x)$ is a nondecreasing nonnegative function in $(-\infty,0].$
	In view of $\phi_{\infty}(0)=0$, we see that $\phi_{\infty}(x)\equiv 0\ \text{for }\ x\in (-\infty,0]$, and hence
 $$	v_L'(L)\to 0\ \text{as}\ L\to+\infty,$$
\begin{equation}\label{vlt0}-\mu(A(L))v_L'(L)\to 0\ \text{as}\ L\to+\infty.	\end{equation}
Thanks to $0<c\leq c_0$ and $q_{c_0}'(x)<0$ for $x\in(-\infty,0]$, we have	
$$	-dq_{c_0}''-cq_{c_0}' \le-dq_{c_0}''-c_0q_{c_0}' =aq_{c_0}-bq_{c_0}^2.$$
Since $$-dv_0''-cv_0' = av_0-bv_0^2\ \text{for}\  x\in(-\infty,0),$$
$v_0(-\infty)=q_{c_0}(-\infty)=\frac{a}{b}$ and $v_0(0)=q_{c_0}(0)=0$, it follows from the comparison principle and the Hopf boundary lemma that either $v_0\equiv q_{c_0}$, which is possible only if $c=c_0$, or $c<c_0$ and
 $$	v_0'(0)<q_{c_0}'(0)=-\frac{c_0}{\mu(a)}=-\frac{c_0}{\mu(A(0))},$$
  thereby
 \begin{equation}\label{vld-} -\mu(A(0))v_0'(0)>c_0. \end{equation}
 By  \eqref{vlt0} and \eqref{vld-},
  the continuous dependence of $\mu(A(L))v_L'(L)$ on $L$ and the monotonicity of $\mu(A(L))v_L'(L)$ in $L$, there exists a unique $L_0\in (0,+\infty)$ such that $$-\mu(A(L_0))v_{L_0}'(L_0)=c.$$
	This completes the proof.
\begin{Remark}\label{rm2.6}
If $L<0$, then due to $A(x)\equiv a$ for $x\leq 0$, it is easily seen that in this case the unique positive solution of  \eqref{L} is given by $v_L(x)\equiv v_0(x-L)$. In particular, when $c=c_0$, then $v_L(x)\equiv q_{c_0}(x-L)$ for $L<0$.
\end{Remark}

\section{Proof of Theorem \ref{c_0>c}}

This section is devoted to the proof of  Theorem \ref{c_0>c}. 
Clearly the vanishing case (i) follows directly from Theorem \ref{0xiaoyucvanishing}. So we only need to consider the spreading case (ii). Accordingly, unless otherwise specified, we always assume $0<c<c_0$ and $h_{\infty}=+\infty$ in the rest of this section. 
Since the proof is quite long, for clarity, we carry it out in two subsections.
\subsection{Behaviour of $h(t)$}

In this subsection, we completely determine the long-time behaviour  of $h(t)$. 

\begin{Lemma}\label{psM}
For any given $L_1>L_0$ and $l>0$, the problem
	\begin{equation}\label{psi}
		\left\{\begin{array}{l}
-d\psi''-c\psi'=A(x)\psi-b\psi^2,\quad -l<x<L_1,\\
    \psi(-l)=M,\quad \psi(L_1)=0
		\end{array}\right.
	\end{equation}
	has a unique positive solution $\psi_{-l,L_1}(x)$, where $M= \max\{\frac{a}{b},\|u_0\|_{\infty}\}$. Moreover, when $l$ is large enough,
	$$ -\mu(A(L_1))\psi_{-l,L_1}'(L_1)<c.$$
\end{Lemma}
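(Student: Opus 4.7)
My plan is to handle existence, uniqueness, and the derivative estimate in turn, with the bulk of the work going into the third item.

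\textbf{Existence and uniqueness.} The constant $\bar\psi\equiv M$ is an upper solution of \eqref{psi}, since $A(x)M-bM^2\le aM-bM^2=M(a-bM)\le 0$ by the choice $M\ge a/b$. A positive lower solution is provided by $v_{L_1}$ (from Proposition 1.1) restricted to $[-l,L_1]$: it satisfies the PDE with equality, vanishes at $L_1$, and takes at $-l$ the value $v_{L_1}(-l)<a/b\le M$ by the strict monotonicity of $v_{L_1}$. Monotone iteration between $v_{L_1}|_{[-l,L_1]}$ and $M$ yields a classical positive solution $\psi_{-l,L_1}$ with $v_{L_1}\le\psi_{-l,L_1}\le M$. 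For uniqueness I would mimic the sliding argument used in the proof of Proposition 1.1: given two positive solutions $w_1,w_2$, the scaled function $(1+\epsilon)w_i$ is a strict supersolution satisfying $(1+\epsilon)w_i(-l)>w_j(-l)$ and $(1+\epsilon)w_i(L_1)=w_j(L_1)=0$ for each $\epsilon>0$, so Lemma~2.1 of \cite{DuMa2001} gives $(1+\epsilon)w_i\ge w_j$ on $[-l,L_1]$; letting $\epsilon\to 0$ and swapping indices forces $w_1=w_2$.

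\textbf{Reduction of the derivative estimate.} I first observe that $L\mapsto -\mu(A(L))v_L'(L)$ is strictly decreasing on $[0,+\infty)$: Proposition~1.1(ii) gives $v_L'(L)$ strictly increasing, so $-v_L'(L)>0$ is strictly decreasing, while $\mu(A(L))>0$ is nonincreasing (because $A$ is strictly decreasing on $[0,l_0]$ and constant on $[l_0,\infty)$, and $\mu$ is nondecreasing); the product of a strictly decreasing positive factor and a nonincreasing positive factor is strictly decreasing. Since $L_1>L_0$ and $-\mu(A(L_0))v_{L_0}'(L_0)=c$, I obtain the strict inequality $-\mu(A(L_1))v_{L_1}'(L_1)<c$. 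Hence it suffices to show $\psi_{-l,L_1}'(L_1)\to v_{L_1}'(L_1)$ as $l\to+\infty$, after which continuity delivers $-\mu(A(L_1))\psi_{-l,L_1}'(L_1)<c$ for all large $l$. For this, comparison shows $\{\psi_{-l,L_1}\}$ is nonincreasing in $l$ on the common domain (any solution is $\le M$, so $\psi_{-l_2,L_1}(-l_1)\le M=\psi_{-l_1,L_1}(-l_1)$ for $l_1<l_2$, with both vanishing at $L_1$) and bounded below by $v_{L_1}$; interior $L^p$/Schauder estimates then give $C^2_{\mathrm{loc}}$ convergence to a limit $\phi$ on $(-\infty,L_1]$ solving the PDE with $\phi(L_1)=0$ and $v_{L_1}\le\phi\le M$. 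By the uniqueness part of Proposition 1.1, it suffices to verify $\phi(-\infty)=a/b$: then $\phi=v_{L_1}$, and boundary regularity at $L_1$ yields the convergence of derivatives.

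\textbf{Main obstacle.} The delicate step is pinning down $\phi(-\infty)=a/b$, since the upper bound $\phi\le M$ allows in principle $\phi$ to exceed $a/b$ near $-\infty$. On $(-\infty,0]$ the equation reduces to the autonomous ODE $d\phi''+c\phi'+a\phi-b\phi^2=0$, and I plan a Lyapunov/phase-plane argument using
\[
G(x)=\tfrac{d}{2}(\phi'(x))^2+\tfrac{a}{2}\phi(x)^2-\tfrac{b}{3}\phi(x)^3,\qquad G'(x)=-c(\phi'(x))^2\le 0.
\]
Interior estimates bound $\phi'$ on $(-\infty,0]$, so $G$ is bounded, $G(-\infty)$ is finite, and $\phi'\in L^2(-\infty,0)$; boundedness of $\phi''$ (from the ODE together with the bounds on $\phi$ and $\phi'$) makes $\phi'$ uniformly continuous, and a Barbalat-type argument then forces $\phi'(x)\to 0$ as $x\to-\infty$. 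Passing to the limit in the ODE gives $\phi(-\infty)\in\{0,a/b\}$, and the lower bound $\phi\ge v_{L_1}$ combined with $v_{L_1}(-\infty)=a/b$ rules out the value $0$. This yields $\phi(-\infty)=a/b$ and completes the plan.
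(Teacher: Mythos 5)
Your proposal is correct, and its skeleton coincides with the paper's: existence via the sub/supersolution pair $(v_{L_1},M)$, uniqueness via the $(1+\epsilon)$-sliding comparison, the strict monotonicity of $L\mapsto-\mu(A(L))v_L'(L)$ to get $-\mu(A(L_1))v_{L_1}'(L_1)<c$, and then $C^1$-convergence of $\psi_{-l,L_1}$ to $v_{L_1}$ up to the endpoint $L_1$ as $l\to+\infty$. The differences are in how the limit $\phi$ of the $\psi_{-l,L_1}$ is identified. The paper takes an arbitrary sequence $l_n\to+\infty$, extracts a convergent subsequence by $L^p$ estimates and a diagonal argument, and then simply cites the uniqueness assertion of Proposition 1.1: since $\phi$ is a positive solution of \eqref{L} with $L=L_1$, it \emph{must} equal $v_{L_1}$, with no condition at $-\infty$ to check. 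This works because the proof of Proposition 1.1 already establishes that \emph{every} positive solution of \eqref{L} satisfies $0<V<a/b$ (Serrin sweeping) and $V(-\infty)=a/b$ before proving uniqueness. Consequently, what you single out as the ``main obstacle'' --- verifying $\phi(-\infty)=a/b$ against the a priori bound $\phi\le M$ --- is not actually an obstacle; your Lyapunov/LaSalle argument with $G=\tfrac d2(\phi')^2+\tfrac a2\phi^2-\tfrac b3\phi^3$ is sound (and self-contained), but it duplicates work already packaged into Proposition 1.1. Your use of monotonicity of $\psi_{-l,L_1}$ in $l$ to obtain convergence of the full family is a clean alternative to the paper's compactness-plus-uniqueness-of-limit argument; both are fine. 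One small merit of your write-up over the paper's is that you spell out why $-\mu(A(L))v_L'(L)$ is decreasing (the sign of $A'$ on $[0,l_0]$ combined with $\mu$ increasing), which the paper leaves implicit.
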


\begin{proof}
It is easy to check that $M$ is a supersolution of problem \eqref{psi}, and $v_{L_1}$ is a subsolution of problem \eqref{psi}. It now follows from the supersolution and subsolution argument, and the comparison principle for logistic equations (see \cite{DuMa2001}) that problem \eqref{psi} admits a unique positive solution $\psi_{-l,L_1}(x)$ satisfying $v_{L_1}(x)\le\psi_{-l,L_1}(x)\le M$ for $x\in [-l,L_1]$.
	
	Let $\{l_n\}$ be an arbitrary increasing sequence satisfying $l_n\to +\infty$ and $l_1>0$. By standard $L^p$ estimates, the Sobolev embedding theorem, and a diagonal process, we can find a subsequence of $\{\psi_{-l_n,L_1}\}$, for simplicity, still represented by itself, such that
	\begin{equation*}\psi_{-l_n,L_1}(x)\to \tilde{v}_{L_1}(x)\ \mbox{in}\ C^{1+\alpha}_{loc}((-\infty,L])\ \mbox{as}\ n\to +\infty,	\end{equation*}
	where $\alpha\in(0,1)$. It is clear that $\tilde{v}_{L_1}(x)$ satisfies
     \begin{equation*}
	  \left\{\begin{array}{l}
-d\tilde{v}_{L_1}''-c\tilde{v}_{L_1}'=A(x)\tilde{v}_{L_1}-b\tilde{v}_{L_1}^2,\quad -\infty<x<L_1,\\
      \tilde{v}_{L_1}(L_1)=0.
	  \end{array} \right.
     \end{equation*}	

By Proposition \ref{v_L}, we see that
$$ \tilde{v}_{L_1}(x)\equiv v_{L_1}(x)\ \mbox{for}\ -\infty<x<L_1.$$
Therefore,
\begin{equation}\label{ps}
\psi_{-l,L_1}(x)\to v_{L_1}(x)\ \mbox{in}\ C^{1+\alpha}_{loc}((-\infty,L_1])\ \mbox{as}\ l\to +\infty.
\end{equation}
Since $L_1>L_0$, we have
$$-\mu(A(L_1))v_{L_1}'(L_1)<-\mu(A(L_0))v_{L_0}'(L_0)=c.$$
Combining this with \eqref{ps}, we conclude that for all sufficiently large $l$,
$$ -\mu(A(L_1)){\psi}_{-l,L_1}'(L_1)<c.$$	
The proof of this lemma is complete.
\end{proof}

\begin{Lemma}\label{-H}$\limsup\limits_{t\to +\infty}[h(t)-ct]<+\infty$.\end{Lemma}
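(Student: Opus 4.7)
The strategy is to build a supersolution $(\bar u,\bar h)$ whose right free boundary moves with speed exactly $c$, so that Theorem~\ref{comparison principle} (or the variation mentioned in Remark~2.5) forces $h(t)\le\bar h(t)=ct+C$. Lemma~\ref{psM} was designed precisely to provide the building block $\psi_{-l,L_1}$.

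First, fix any $L_1>L_0$. Since $A$ is non-increasing on $\mathbb R$, $\mu$ is non-decreasing, and $L\mapsto v_L'(L)$ is strictly increasing by Proposition~\ref{v_L}(ii), we have $-\mu(A(L_1))v_{L_1}'(L_1)<-\mu(A(L_0))v_{L_0}'(L_0)=c$. By the convergence $\psi_{-l,L_1}\to v_{L_1}$ established in the proof of Lemma~\ref{psM}, I may pick $l$ so large that $-\mu(A(L_1))\psi'_{-l,L_1}(L_1)<c$. Then I extend $\psi_{-l,L_1}$ continuously to $(-\infty,L_1]$ by
\[
\tilde\psi(y)=\begin{cases} M, & y\le -l,\\ \psi_{-l,L_1}(y), & -l\le y\le L_1,\end{cases}
\]
pick $K\ge h_0+l$, and set $\bar h(t):=ct+K+L_1$ and $\bar u(x,t):=\tilde\psi(x-ct-K)$.

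The verification proceeds as follows. On each smooth piece, using the equation satisfied by $\tilde\psi$ and the monotonicity of $A$, a direct computation gives
\[
\bar u_t-d\bar u_{xx}-A(x-ct)\bar u+b\bar u^2=[A(x-ct-K)-A(x-ct)]\,\tilde\psi\ge 0
\]
on $\{x-ct-K\in(-l,L_1)\}$, while on $\{x-ct-K<-l\}$ (where $\tilde\psi\equiv M$) it reduces to $M(bM-A(x-ct))\ge 0$. At $x=0$ the choice $K\ge l$ forces $-ct-K\le -l$, so $\bar u\equiv M$ near $x=0$ and $\bar u_x(0,t)=0$. At $x=\bar h(t)$ we have $\bar h'(t)=c$, and by the monotonicity of $\mu\circ A$,
\[
-\mu(A(K+L_1))\psi'_{-l,L_1}(L_1)\le -\mu(A(L_1))\psi'_{-l,L_1}(L_1)<c,
\]
so the free boundary inequality holds. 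Finally, since $x-K\le h_0-K\le -l$ for $x\in[0,h_0]$, we get $\bar u(x,0)=M\ge u_0(x)$ and $\bar h(0)=K+L_1>h_0$.

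The one delicate point is the interior corner at $x-ct-K=-l$: the left derivative of $\tilde\psi$ is $0$ while the right derivative is $\psi'_{-l,L_1}(-l)$, which is strictly negative by Hopf's boundary lemma (applied to $\psi_{-l,L_1}$ at its boundary maximum $M$, using $A(-l)M-bM^2\le 0$). This jump is concave, i.e.\ in the supersolution-friendly direction, so either a standard smoothing of $\tilde\psi$ from above or the variation of the comparison principle alluded to in Remark~2.5 applies. The comparison principle then yields $h(t)\le\bar h(t)=ct+K+L_1$ for all $t\ge 0$, giving $\limsup_{t\to\infty}[h(t)-ct]\le K+L_1<+\infty$. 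The main obstacle is thus purely the corner justification; every other ingredient is a direct consequence of Lemma~\ref{psM}, Proposition~\ref{v_L}, and the monotonicity of $A$ and $\mu\circ A$.
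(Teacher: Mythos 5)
Your proposal is correct and follows essentially the same route as the paper: both invoke Lemma~\ref{psM} to obtain $-\mu(A(L_1))\psi'_{-l,L_1}(L_1)<c$ for $L_1>L_0$ and $l$ large, extend $\psi_{-l,L_1}$ by the constant $M$ on the left, translate with speed $c$, and apply the comparison principle (the paper likewise treats the concave corner at $x-ct-r=-l^0$ only ``in the weak sense,'' using $\psi'_{-l^0,L_1}(-l^0)\le 0$). Your extra care with the corner and the Hopf lemma is a harmless refinement of the same argument.
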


\begin{proof}
According to Lemma \ref{psM}, for $L_1>L_0$, we can choose a large constant $l^0>0$ such that
	$$-\mu(A(L_1))\psi_{-l^0,L_1}'(L_1)<c.$$ Since $\psi_{-l^0,L_1}(x)$ achieves its maximum $M$ at $x=-l^0$, we have $\psi_{-l^0,L_1}'(-l^0)\leq 0$.
	 Choose $r>h_0+l^0$, and define
	\begin{equation*}
	\eta(t):=ct+L_1+r,\;\;	\bar{v}(x):=
	\begin{cases}
	M,&x\in(-\infty,-l^0),\\
	\psi_{-l^0,L_1}(x),&x\in[-l^0,L_1], 	
	\end{cases}
	\end{equation*}
and	 $$\bar{u}(x,t):=\bar{v}(x-ct-r).$$ 	
Then it is easy to check that  for $t>0$ and $x\in[0,\eta(t)]$, in the weak sense,
  	\begin{equation*}
   	\begin{split}
   	\bar{u}_t-d\bar{u}_{xx}&\geq A(x-ct-r)\bar{u}-b\bar{u}^2\\
   	           &\ge A(x-ct)\bar{u}-b\bar{u}^2,\\   	
   	\end{split}    	
   	\end{equation*}
   $$\bar{u}(\eta(t),t)=\bar{v}(L_1)=0,$$
 	\begin{equation*}
   	\begin{split}-\mu(A(\eta(t)-ct))\bar{u}_x(\eta(t),t)&=-\mu(A(L_1+r))\psi_{-l^0,L_1}'(L_1)\\
   &\leq -\mu(A(L_1))\psi_{-l^0,L_1}'(L_1)<c=\eta'(t),\end{split}    	
   	\end{equation*}
  $$\bar{u}_x(0,t)=\bar{v}'(-ct-r)=0,$$
and
$$u_0(x)\le M=\bar{u}(x,0)\ \text{for}\ x\in[0,h_0].$$
On account of the comparison principle, we have
\begin{equation}\label{ht}
	h(t)\le\eta(t)=ct+L_1+r\ \text{for} \ t>0,
\end{equation}
$$u(x,t)\le\bar{u}(x,t)\ \text{for}\ t>0\ \mbox{and}\ 0<x<h(t).$$
From \eqref{ht}, we obtain $\limsup\limits_{t\to +\infty}[h(t)-ct]<+\infty$.
\end{proof}

\begin{Lemma}\label{H-}$\liminf\limits_{t\to +\infty}[h(t)-ct]>-\infty$.\end{Lemma}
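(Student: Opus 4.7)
\smallskip
\noindent\textbf{Plan.} The strategy is to construct a moving subsolution $(\underline u,\underline h)$ whose free boundary advances at speed exactly $c$, built from a scaled truncation of the semi-wave $v_{L_2}$ with $L_2\in(0,L_0)$. By Proposition \ref{v_L}(ii)--(iii), the map $L\mapsto -\mu(A(L))v_L'(L)$ is continuous and monotone (it equals $c$ at $L=L_0$, exceeds $c_0$ at $L=0$, and tends to $0$ as $L\to\infty$); hence $-\mu(A(L_2))v_{L_2}'(L_2)>c$ for any $L_2\in(0,L_0)$. Truncating as in the proof of Proposition \ref{v_L}, for all sufficiently large $l$ the Dirichlet problem
\[
-d\phi''-c\phi'=A(x)\phi-b\phi^2\ \text{in }(-l,L_2),\qquad \phi(-l)=\phi(L_2)=0,
\]
admits a unique positive solution $\phi_l\le v_{L_2}$, and $\phi_l\nearrow v_{L_2}$ in $C^1_{\mathrm{loc}}((-\infty,L_2])$ as $l\to\infty$, so $\phi_l'(L_2)\to v_{L_2}'(L_2)$. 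Fix $l$ large enough that $-\mu(A(L_2))\phi_l'(L_2)>c$, then pick $\sigma\in(0,1)$ with $-\sigma\mu(A(L_2))\phi_l'(L_2)>c$.

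For parameters $r\le 0$ and $t_1>0$ to be specified, define
\[
\underline u(x,t):=\sigma\phi_l(x-ct-r),\quad \underline h(t):=ct+r+L_2,\quad \underline g(t):=ct+r-l,
\]
on $\{(x,t):\underline g(t)\le x\le\underline h(t),\ t\ge t_1\}$. Using the equation for $\phi_l$, a direct calculation gives
\[
\underline u_t-d\underline u_{xx}-A(x-ct)\underline u+b\underline u^2=\sigma\bigl[A(x-ct-r)-A(x-ct)\bigr]\phi_l-b\sigma(1-\sigma)\phi_l^2\le 0,
\]
since $A$ is non-increasing on $\R$ and $r\le 0$. The right-boundary condition $\underline u(\underline h(t),t)=0$ is automatic, and the free-boundary inequality $\underline h'(t)=c\le -\mu(A(\underline h(t)-ct))\underline u_x(\underline h(t),t)$ reduces to $\sigma\mu(A(r+L_2))(-\phi_l'(L_2))\ge \sigma\mu(A(L_2))(-\phi_l'(L_2))>c$, valid because $r+L_2\le L_2$ and $\mu\circ A$ is non-increasing. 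At the moving left edge, $\underline u(\underline g(t),t)=\phi_l(-l)=0\le u(\underline g(t),t)$ once $\underline g(t)>0$.

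It remains to place $(\underline u,\underline h)$ inside $(u,h)$ at the initial time. Take $r=l-ct_1$, so that $\underline g(t_1)=0$ and $\underline h(t_1)=l+L_2$; since $h_\infty=+\infty$, $h(t_1)\ge l+L_2$ for $t_1$ large. The key point is to verify $u(\cdot,t_1)\ge\sigma\phi_l(\cdot-l)$ on $[0,l+L_2]$; because $\phi_l\le v_{L_2}\le a/b$, it suffices to show $u(\cdot,t_1)\ge\sigma(a/b)$ there. Fix $L'>l+L_2$: for $t\ge L'/c$ the interval $[0,L']$ lies in the favorable region $\{A(\cdot-ct)\equiv a\}$, so $u|_{[0,L']}$ is a supersolution of
\[
w_t=dw_{xx}+aw-bw^2,\quad w_x(0,t)=0,\quad w(L',t)=0,
\]
whose solution converges to the unique positive steady state $w_{L'}^*$. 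The profile $w_{L'}^*$ is decreasing on $[0,L']$ and $w_{L'}^*(L'-K)\to a/b$ as $L'\to\infty$ and then $K\to\infty$ (its boundary layer near $L'$ approaching the standard ground-state semi-wave). Hence for $K,L'$ large we obtain $w_{L'}^*\ge\sigma(a/b)+\epsilon$ on $[0,l+L_2]\subset[0,L'-K]$, and then for $t_1$ sufficiently large, $u(\cdot,t_1)\ge w_{L'}^*-\epsilon\ge\sigma(a/b)$ on $[0,l+L_2]$. The comparison principle (Theorem \ref{comparison principle}, adapted to allow a subsolution with moving Dirichlet left edge strictly interior to $[0,h(t)]$) now yields $\underline h(t)\le h(t)$ for all $t\ge t_1$, so $h(t)-ct\ge l+L_2-ct_1$, a finite constant, completing the proof. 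The main obstacle is this initial uniform lower bound on $u(\cdot,t_1)$; the scaling factor $\sigma<1$ is deliberately introduced to leave the gap between $\sigma(a/b)$ and $a/b$ needed for the Dirichlet--Neumann auxiliary analysis to close.
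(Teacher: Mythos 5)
Your proof is correct and follows essentially the same route as the paper: choose $L_2\in(0,L_0)$ so that $-\mu(A(L_2))v_{L_2}'(L_2)>c$, truncate $v_{L_2}$ to a compactly supported travelling subsolution with front speed $c$, use the local convergence of $u$ to $a/b$ (a consequence of $h_\infty=+\infty$ and $A\equiv a$ behind $x=ct$) to slide it under $u$ at a large time, and conclude by comparison. The only differences are cosmetic bookkeeping: you scale by $\sigma<1$ where the paper instead invokes the strong maximum principle to get $w_{-l',L_2}\le a/b-\epsilon$, and you shift in space where the paper shifts in time.
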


\begin{proof} Since $0<c<c_0<2\sqrt{ad},$ for large $l>0$, the problem
	\begin{equation*}
	 \left\{\begin{array}{l}
-dU''-cU'=aU-bU^2,\quad -l<x<0,\\
		U(-l)=U(0)=0
		\end{array} \right.	\end{equation*}
admits a unique positive solution $U_l$ (see the proof of Proposition 2.1 in \cite{BDu2012}).	
For any given $L>0$, let
\begin{equation*}
	\underline{v}(x)=
	 \begin{cases}
	U_l(x),&x\in [-l,0],\\
	0,&x\in (0,L].	
	\end{cases}
\end{equation*}
Then it is easily seen that $\underline{v}(x)$ is a subsolution and $M$ is a supersolution of the problem
\begin{equation}\label{w-lL}
	\left\{\begin{array}{l}
-dw''-cw'=A(x)w-bw^2,\quad -l<x<L,\\
 w(-l)=w(L)=0.
	 \end{array}\right.\end{equation}
Thus, on account of supersolution and subsolution argument, and the comparison principle for logistic equations (see \cite{DuMa2001}), problem \eqref{w-lL} has a unique positive solution $w_{-l,L}(x)$.

Now, we choose $L_2$ satisfying $0<L_2<L_0$. Let $\{l_n\}$ be an arbitrary sequence satisfying $l_n\to +\infty$. Similar to the discussion in the proof of Lemma \ref{psM}, we can find a subsequence of $\{w_{-l_n,L_2}\}$, represented still by
$\{w_{-l_n,L_2}\}$ for the sake of convenience, such that
$$w_{-l_n,L_2}(x)\to v_{L_2}(x)\ \mbox{in}\ C^{1+\alpha}_{loc}((-\infty,L_2])\ \mbox{as}\ n\to +\infty,\ \mbox{where}\ \alpha\in (0,1).$$
It follows that $$w_{-l,L_2}(x)\to v_{L_2}(x)\ \mbox{in}\ C^{1+\alpha}_{loc}((-\infty,L_2])\ \mbox{as}\ l\to +\infty.$$ 	
Due to $-\mu(A(L_2))v_{L_2}'(L_2)>-\mu(A(L_0))v_{L_0}'(L_0)=c$, we have 	
	$$-\mu(A(L_2))w_{-l',L_2}'(L_2)>c$$
for all sufficiently large $l'$.
By the strong maximum principle, there exists $\epsilon>0$ such that $$w_{-l',L_2}(x)\le \frac{a}{b}-\epsilon,\quad x\in [-l',L_2].$$
	
Set $$B(t):=\min_{t\in [0,+\infty)}\{h(t),ct\};$$ due to $h_{\infty}=+\infty,\ \lim\limits_{t\to +\infty}B(t)=+\infty$.
Since $x-ct\leq B(t)-ct\le0$ for $x\in[0,B(t)]$ and
 $t>0,\ A(x-ct)=a$ for such $x$ and $t$.
A slight change (substituting $B(t)$ for $h(t)$) in the proof of Lemma 3.2 in \cite{DuLin2010}  shows that
\begin{equation}\label{cuab}
	\displaystyle\lim_{t\to +\infty}u(x,t)=\frac{a}{b}\ \text{uniformly for}\ x\ \text{in any compact subset of}\ [0,\infty).
\end{equation}

We can select $T_1>0$ such that $ct>l'$ for $t\ge T_1$.
Thanks to $h_{\infty}=+\infty,$ there exists
$T_2>T_1$ such that $h(t)>L_2+cT_1$ for all $t\ge T_2$. We now define $$\eta_1(t):=-l'+ct,\ \eta_2(t):=L_2+ct\ \text{for}\ t>T_1,$$
$$\underline{u}(x,t):=w_{-l',L_2}(x-ct)\quad \text{for}\quad t>T_1,\ x\in[\eta_1(t),\eta_2(t)].$$
From \eqref{cuab}, there exists $T_3>T_2$ such that
$$u(x,T_3)>\frac{a}{b}-\epsilon\quad \text{for}\quad x\in[\eta_1(T_1),\eta_2(T_1)].$$
Explicit computations show that
$$\underline{u}_t-d\underline{u}_{xx}=A(x-ct)\underline{u}-b\underline{u}^2\ \text{for}\ t>T_1,\ x\in[\eta_1(t),\eta_2(t)],$$
$$\eta_2(T_1)=L_2+cT_1<h(T_3),$$
$$\underline{u}(\eta_1(t)),t)=\underline{u}(\eta_{2}(t),t)=0\ \text{for}\ t>T_1,$$
\begin{equation*}\begin{split}-\mu(A(\eta_2(t)-ct))\underline{u}_x(\eta_2(t),t)&=-\mu(A(L_2))w_{-l',L_2}'(L_2)>c=\eta_2'(t)\ \text{for}\ t>T_1,\end{split}\end{equation*}
and$$\underline{u}(x,T_1)<u(x,T_3)\ \text{for}\ x\in[\eta_1(T_1),\eta_2(T_1)].$$
It follows from the comparison principle that
   $$u(x,t+T_3)\geq \underline{u}(x,t+T_1)\ \text{for}\ t>0\ \mbox{and}\ x\in[\eta_1(t+T_1),\eta_2(t+T_1)],$$
   $$h(t+T_3)\ge\eta_2(t+T_1)=L_2+c(t+T_1)\ \text{for all}\ t>0.$$
   This proves the lemma.\end{proof}

Combining Lemmas \ref{-H} and \ref{H-}, we obtain the following result.

\begin{Lemma}\label{bounded}There exists constant $C_3>0$ such that
	$$| h(t)-ct|< C_3\ \text{for all}\ t>0.$$
\end{Lemma}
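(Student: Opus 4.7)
The plan is to derive the uniform estimate as an immediate corollary of the two preceding lemmas, together with the continuity of $h(t)-ct$ on compact time intervals. Since Lemma \ref{-H} and Lemma \ref{H-} are statements about $\limsup$ and $\liminf$ as $t\to+\infty$, they a priori only bound $h(t)-ct$ on a half-line of the form $[T,\infty)$; the only extra work is to handle the initial interval $[0,T]$ by elementary means.

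First I would invoke Lemma \ref{-H} to obtain constants $T_1>0$ and $\overline{M}>0$ such that
\[
h(t)-ct\le \overline{M}\quad \text{for all }t\ge T_1,
\]
and Lemma \ref{H-} to obtain $T_2>0$ and $\underline{M}>0$ such that
\[
h(t)-ct\ge -\underline{M}\quad \text{for all }t\ge T_2.
\]
Setting $T_\ast:=\max\{T_1,T_2\}$, I then have $|h(t)-ct|\le\max\{\overline{M},\underline{M}\}$ on $[T_\ast,\infty)$.

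Next I would deal with the bounded interval $[0,T_\ast]$. By the global existence result (the theorem following Lemma \ref{estimate}), the function $h$ belongs to $C^1([0,+\infty))$, hence the map $t\mapsto h(t)-ct$ is continuous and therefore bounded on the compact interval $[0,T_\ast]$; let $M_0$ denote such a bound. Choosing
\[
C_3:=1+\max\bigl\{\overline{M},\,\underline{M},\,M_0\bigr\}
\]
yields the strict inequality $|h(t)-ct|<C_3$ for all $t>0$, as required.

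There is no substantive obstacle here: the content of the statement is entirely absorbed by Lemmas \ref{-H} and \ref{H-}, and Lemma \ref{bounded} is merely a convenient reformulation that packages the two one-sided asymptotic bounds into a single two-sided bound valid for all $t>0$. The only mild point to be careful about is not to forget the initial segment $[0,T_\ast]$, which is handled trivially by the continuity of $h$.
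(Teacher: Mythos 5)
Your proof is correct and follows exactly the route the paper takes: the paper simply states that Lemma \ref{bounded} is obtained by combining Lemmas \ref{-H} and \ref{H-}, and your argument fills in the routine details (the compact initial interval handled by continuity of $h$). No issues.
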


\begin{Lemma}\label{barH}
	 $\bar{H}:=\limsup\limits_{t\to +\infty}[h(t)-ct]\le L_0$.
\end{Lemma}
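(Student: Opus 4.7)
The plan is to argue by contradiction: assume $\bar{H}>L_0$ and derive a contradiction through a limiting argument in the moving frame, combined with the uniqueness statement in Proposition \ref{v_L}. Since $k(t):=h(t)-ct$ is uniformly bounded (Lemma \ref{bounded}), I pick $t_n\to+\infty$ with $k(t_n)\to\bar{H}$; by relabelling I may further arrange $k(t_n)=\max_{|t-t_n|\leq n}k(t)$, so the translates $k_n(\tau):=k(\tau+t_n)$ attain their maximum on $[-n,n]$ at $\tau=0$. Setting $v_n(\xi,\tau):=u(\xi+c(\tau+t_n),\tau+t_n)$, the pair $(v_n,k_n)$ satisfies
\[
(v_n)_\tau=d(v_n)_{\xi\xi}+c(v_n)_\xi+A(\xi)v_n-bv_n^2,\quad -c(\tau+t_n)<\xi<k_n(\tau),
\]
with $v_n(k_n(\tau),\tau)=0$ and $k_n'(\tau)=-\mu(A(k_n(\tau)))(v_n)_\xi(k_n(\tau),\tau)-c$.

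By Lemma \ref{estimate} and standard parabolic Schauder estimates, $v_n$ and $k_n$ are uniformly bounded in $C^{1+\alpha,(1+\alpha)/2}_{loc}$ and $C^{1+\alpha/2}_{loc}$ respectively. A subsequence converges to a limit $(v^*,k^*)$, and because the left boundary $-c(\tau+t_n)$ escapes to $-\infty$, $(v^*,k^*)$ is an entire solution of the moving-frame free boundary problem on $\{\xi<k^*(\tau):\tau\in\mathbb{R}\}$, with $k^*(0)=\bar{H}$ and $k^*(\tau)\leq\bar{H}$ for all $\tau$. The spreading result \eqref{cuab} together with Lemma \ref{H-} yields $v^*(\xi,\tau)\to a/b$ as $\xi\to-\infty$, uniformly on compact $\tau$-intervals. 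Finally, $\tau=0$ being a maximum of $k^*$ forces $(k^*)'(0)=0$, that is
\[
-\mu(A(\bar{H}))\,v^*_\xi(\bar{H},0)=c.
\]

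The heart of the proof, and the main obstacle, is to upgrade this to $k^*\equiv\bar{H}$ and $v^*(\cdot,\tau)\equiv v_{\bar{H}}$. Once accomplished, the slope identity above reads $-\mu(A(\bar{H}))v_{\bar{H}}'(\bar{H})=c$, and the uniqueness of $L_0$ in Proposition \ref{v_L}(iii) forces $\bar{H}=L_0$, contradicting $\bar{H}>L_0$. To obtain this rigidity I would run a Serrin-type sweeping argument in the spirit of the uniqueness proof inside Proposition \ref{v_L}: because $\bar{H}>L_0$ yields the strict inequality $-\mu(A(\bar{H}))v_{\bar{H}}'(\bar{H})<c$ (from Proposition \ref{v_L}(ii) together with the monotonicity of $\mu\circ A$, which makes $L\mapsto -\mu(A(L))v_L'(L)$ strictly decreasing), the stationary semi-wave $v_{\bar{H}}$ acts as a strict barrier for $(v^*,k^*)$; comparing $(1\pm\epsilon)v_{\bar{H}}$ with $v^*(\cdot,\tau)$ and exploiting the tail $v^*\to a/b$ at $-\infty$ together with the boundary identity above pins down both $k^*\equiv\bar{H}$ and $v^*\equiv v_{\bar{H}}$. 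Executing this sweeping in a parabolic setting with a moving free boundary, along with checking that the overdetermined Dirichlet–Neumann pair at $\xi=\bar{H}$ propagates to stationarity on the whole time line, is the technical crux.
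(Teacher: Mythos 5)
Your first half reproduces the paper's argument: the moving-frame rescaling, the parabolic estimates and compactness, the entire limit solution $(v^*,k^*)$ with $k^*(0)=\bar H=\sup k^*$, and the resulting Neumann identity $-\mu(A(\bar H))v^*_\xi(\bar H,0)=c$ are all correct. (The extra ``relabelling'' so that $t_n$ maximizes $k$ over $[t_n-n,t_n+n]$ is neither needed nor straightforward to arrange; $k^*(\tau)\le\bar H$ already follows from $\bar H=\limsup_{t\to\infty}k(t)$, which is all you use.) The problem is the second half. You explicitly defer the decisive step --- proving $k^*\equiv\bar H$ and $v^*\equiv v_{\bar H}$ --- to an unexecuted ``Serrin-type sweeping'' argument, and this rigidity claim is a genuine Liouville-type theorem for an entire solution of a free boundary problem, not a routine adaptation of the elliptic uniqueness proof in Proposition \ref{v_L}: sweeping with $(1\pm\epsilon)v_{\bar H}$ does not obviously close, because $v_{\bar H}$ vanishes at $x=\bar H$ while the free boundary $k^*(\tau)$ may lie strictly below $\bar H$ for $\tau\neq 0$, so the comparison domains do not match and the elliptic comparison lemma (Lemma 2.1 of \cite{DuMa2001}) does not apply to the time-dependent problem. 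As written, the proof is incomplete precisely where the contradiction is supposed to materialize.

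The paper shows the rigidity is unnecessary: a one-sided bound at the single time $t=0$ suffices. Since $0\le \hat V\le M$, one compares $\hat V$ with the solution $\bar v_{\bar H}$ of the initial--boundary value problem on $[-\bar l,\bar H]$ with Dirichlet datum $M$ at $x=-\bar l$, zero at $x=\bar H$, and initial value $M$; this solution decreases in $t$ to the steady state $\psi_{-\bar l,\bar H}$ of Lemma \ref{psM}. A time shift ($\bar v_{\bar H}(x,t)\ge \hat V(x,t-s)$ for all $s>0$, then $s\to\infty$) yields $\hat V(\cdot,0)\le \psi_{-\bar l,\bar H}$, and the strong maximum principle together with the Hopf lemma at $x=\bar H$ gives $\psi'_{-\bar l,\bar H}(\bar H)<\hat V_x(\bar H,0)=-c/\mu(A(\bar H))$, i.e.\ $-\mu(A(\bar H))\psi'_{-\bar l,\bar H}(\bar H)>c$, contradicting the inequality $-\mu(A(\bar H))\psi'_{-\bar l,\bar H}(\bar H)<c$ that Lemma \ref{psM} provides for $\bar H>L_0$ and $\bar l$ large. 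If you want to repair your proof, replace the rigidity step by this comparison: the artificial Dirichlet datum $M$ at $-\bar l$ is exactly what removes the need for any control of $v^*$ as $\xi\to-\infty$ or of $k^*$ away from $\tau=0$.
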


\begin{proof}
Suppose the lemma is false; then $\bar{H}>L_0$. By Lemma \ref{bounded}, 
	\begin{equation}\label{C3}
		-C_3\le h(t)-ct \le C_3\ \text{for}\ t>0.
	\end{equation}
Let $t_n\to \infty$ be a positive sequence satisfying $\lim\limits_{n\to +\infty}[h(t_n)-ct_n]=\bar{H}$.
We define $$\hat{g}(t):=h(t)-ct+2C_3,\quad v(x,t):=u(x+ct-2C_3,t)\ \text{for}\ t>0,$$	$$v_n(x,t)=v(x,t+t_n),\quad \hat{g}_n(t)=\hat{g}(t+t_n),$$	
$$y=\frac{x}{\hat{g}_n(t)},\quad w_n(y,t)=v_n(x,t).$$	
Then
\begin{equation}\label{wn}	(w_n)_t-\frac{c+\hat{g}'_n(t)y}{\hat{g}_n(t)}(w_n)_y-\frac{d}{{\hat{g}_n}^2(t)}(w_n)_{yy}=A(y\hat{g}_n(t)-2C_3)w_n-bw_n^2
\end{equation}	
	for $\frac{-c(t+t_n)+2C_3}{\hat{g}_n}\le y < 1,\ t>-t_n$,
	 \begin{equation}\label{1t}
		w_n(1,t)=0\ \text{for}\ t>-t_n,
	 \end{equation}
and	 \begin{equation}\label{wngn}
	 	-\mu(A(\hat{g}_n(t)-2C_3))\frac{(w_n)_y(1,t)}{\hat{g}_n(t)}=\hat{g}_n'(t)+c\ \text{for}\ t>-t_n.
	 \end{equation}
	 Let us observe that, for $t>-t_n$ and $y\in [-X, 0]$ with any fixed $X>0$,
	 \[
	 \frac{d}{{\hat{g}_n}^2(t)} \mbox{ is uniformly continuous, and }  \frac{c+\hat{g}'_n(t)y}{\hat{g}_n(t)} \mbox{ is uniformly bounded}.
	 	 	 \]	 
Therefore, for any given $X>0,\ T_1\in \mathbb{R}^1$, and $p>1$, we can apply the parabolic $L^p$ estimate to \eqref{wn} and \eqref{1t} over $[-X-\frac{1}{2},1]\times[T_1-\frac{1}{2}, T_1+\frac{1}{2}]$, to conclude that there exists a positive integer $N_1$ such that
$$\| w_n\|_{W_p^{2,1}([-X,1]\times[T_1,T_1+\frac{1}{2}])}\le C_4\ \mbox{for all}\ n\ge N_1,$$
for some constant $C_4>0$ which depends on $X$ and $p$ but does not depend on $N_1$ and $T_1$.
Hence, by Sobolev imbeddings, we conclude that there exists $\beta\in (0,1)$ such that
\begin{equation}\label{C5}
	\| w_n\|_{C^{1+\beta,\frac{1+\beta}{2}}([-X,1]\times[T_1,+\infty))}\le C_5\ \text{for every}\ n\ge N_1,
\end{equation}
for some constant $C_5>0$ which depends on $\beta$ and $X$ but does not depend on $N_1$ and $T_1$.
Combining \eqref{wngn} with \eqref{C5}, we obtain
$$	\|\hat{g}_n\|_{C^{1+\frac{\beta}{2}}([T_1+\infty))}\le C_6\ \text{for all sufficiently large}\ n,$$
where $C_6$ is a constant independent of $n$ and $T_1$.
 Thus, there exists a subsequence of $w_n$, denoted still by $w_n$ for convenience, such that
 \begin{equation*}\label{wntoW}
 	w_n\to W\ \text{in}\ C_{loc}^{1+\beta^{\prime},\frac{1+\beta^{\prime}}{2}}((-\infty,1]\times \mathbb{R}^1)\ \mbox{as}\ n\to +\infty
 \end{equation*}
 for some $\beta^{\prime}\in (0,\beta)$. It follows that, subject to passing to a further subsequence,
\begin{equation}\label{gnG}
	\hat{g}_n\to G\ \text{in}\ C_{loc}^{1+\frac{\beta^{\prime}}{2}}(\mathbb{R}^1).
\end{equation}
 By the parabolic Schauder estimate, we know $(W,G)$ satisfies the following equations in the classical sense:
\begin{equation*}\label{re}
\left\{
\begin{aligned}
&W_t-\frac{d}{G^2}W_{yy}-\frac{c+G'(t)y}{G}W_y=A(yG(t)-2C_3)W-bW^2,\ &t&\in \mathbb{R}^1,\ y\in (-\infty,1),\\
&W\left(1,t\right)=0,\ -\mu(A(G(t)-2C_3))\frac{W_y(1,t)}{G}=G'(t)+c,\ &t&\in \mathbb{R}^1. \end{aligned}
\right.
\end{equation*}
Let $x=G(t)y$ and $V(x,t):=W(y,t)$. Then it is easy to check that $(V,G)$ satisfies
\begin{equation*}
\left\{
\begin{aligned}
&V_t-dV_{xx}-cV_x=A(x-2C_3)V-bV^2,\ &t&\in \mathbb{R}^1,\ x\in (-\infty,G(t)),\\
&V\left(G(t),t\right)=0,\ -\mu(A(G(t)-2C_3)) V_x(G(t),t)=G'(t)+c,\ &t&\in \mathbb{R}^1, \end{aligned}
\right.
\end{equation*}
and
$$	v_n\to V\ \text{in}\ C_{loc}^{1+\beta^{\prime},\frac{1+\beta^{\prime}}{2}}((-\infty,G(t)]\times \mathbb{R}^1).$$
Set $\hat{V}(x,t):=V(x+2C_3,t)$ and $F(t):=G(t)-2C_3$, then $(\hat{V},F)$ satisfies
\begin{equation*}\label{jianV}
\left\{
\begin{aligned}
&V_t-dV_{xx}-cV_x=A(x)V-bV^2,\ &t&\in \mathbb{R}^1,\ x\in (-\infty,F(t)),\\
&V\left(F(t),t\right)=0,\ -\mu(A(F(t))) V_x(F(t),t)=F'(t)+c,\ &t&\in \mathbb{R}^1. \end{aligned}
\right.
\end{equation*}
From \eqref{C3} and \eqref{gnG}, we obtain
$$ 
\underline{H}+2C_3\le G(t) \le\bar{H}+2C_3\ \text{for}\ t\in
\mathbb{R}^1,
$$
and hence
$$ 
\underline{H}\le F(t)\le \bar{H}\ \text{for}\ t\in \mathbb{R}^1.
$$
In view of $F(0)=\displaystyle\lim_{n\to +\infty}[h(t_n)-ct_n]=\bar{H}$, we have 
$$
\sup_{t\in
	\mathbb{R}^1}F(t)=F(0)=\bar{H},
	$$
which implies that  $F^{\prime}(0)=0$, and hence 
\[
-\mu(A(\bar{H})) \hat{V}_x(\bar{H},0)=c.
\]

Due to $\bar{H}>L_0$, by  Lemma \ref{psM}, we can find a large constant $\bar{l}>0$ such that
\begin{equation}\label{psibarl}
	-\mu(A(\bar{H}))\psi'_{-\bar{l},\bar{H}}(\bar{H})<c,
\end{equation}
where $\psi_{-\bar{l},\bar{H}}$
is the unique positive solution of
\begin{equation*}
\left\{
\begin{array}{l}
-d\psi''-c\psi'=A(x)\psi-b\psi^2,\quad -\bar{l}<x<\bar{H},\\
\psi(-\bar{l})=M,\quad \psi(\bar{H})=0.
\end{array}
\right.
\end{equation*}

   Consider the solution $\bar{v}_{\bar{H}}$ of
 \begin{equation*} \begin{cases}
 V_t-dV_{xx}-cV_x=A\left(x\right)V-bV^2,\ &t>0,\ -\bar{l}<x<\bar{H},\\
 V(-\bar{l},t)=M,\ V\left(\bar{H},t\right)=0 &t>0,\\
 V(x,0)=M,\ &x\in [-\bar{l},\bar{H}].    	
 \end{cases}
 \end{equation*}
 Since $M$ is a super solution of the corresponding elliptic problem, it follows from a well-known result on parabolic equations that $\bar{v}_{\bar{H}}(x,t)$ is decreasing in $t$ and
\begin{equation}\label{barvtopsi}
\lim_{t\to +\infty}\bar{v}_{\bar{H}}(x,t)=\psi_{-\bar{l},\bar{H}}(x)\quad \text{uniformly for}\ x\in [-\bar{l},\bar{H}].
\end{equation}
Since $0\leq u\leq M$, we have $0\leq \hat V\leq M$, and so we can use
 the comparison principle to conclude that
  $$\bar{v}_{\bar{H}}(x,t)\geq \hat{V}(x,t-s),\quad \forall s>0,\ t>0,\ -\bar{l}<x<F(t).
  $$
 Choosing $s_n\to \infty$, we obtain $\bar{v}_{\bar{H}}(x,s_n)\geq \hat{V}(x,0)$ for $-\bar{l}<x<\bar{H}$.
 Combining this with \eqref{barvtopsi}, we have $$\psi_{-\bar{l},\bar{H}}(x)\geq \hat{V}(x,0)\quad \mbox{for}\ -\bar{l}<x<\bar{H}.$$
It follows from the strong maximum principle and the Hopf boundary lemma that $$\psi'_{-\bar{l},\bar{H}}(\bar{H})<\hat{V}_x(\bar{H},0)=-\frac{c}{\mu(A(\bar{H}))},$$
which contradicts \eqref{psibarl}.
The proof is now complete.\end{proof}

\begin{Lemma}\label{underlineH}
	$\underline{H}:=\liminf\limits_{t\to +\infty}[h(t)-ct]\ge L_0$.
\end{Lemma}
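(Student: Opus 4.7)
My plan is to argue by contradiction, running an argument that mirrors Lemma \ref{barH} but swaps the roles of maximum and minimum. Assume $\underline{H}<L_0$. Pick $t_n\to+\infty$ with $h(t_n)-ct_n\to\underline{H}$ and repeat verbatim the rescaling and compactness procedure of Lemma \ref{barH}: translating by $t_n$, normalising the domain by $y=x/\hat g_n(t)$, applying parabolic $L^p$ and Schauder estimates, passing to $C^{1+\beta',(1+\beta')/2}_{\mathrm{loc}}$ limits and then undoing the rescaling. This will produce a limit pair $(\hat V,F)$ solving the same moving-frame free boundary system on $\{(x,t):x\le F(t),\,t\in\R\}$ as in Lemma \ref{barH}, with $\underline{H}\le F(t)\le \bar H$ and $F(0)=\underline{H}$. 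The single structural change is that $\underline{H}$ is now attained as a \emph{minimum} of $F$; hence $F'(0)=0$ and the free-boundary condition at $t=0$ becomes
\[-\mu(A(\underline{H}))\,\hat V_x(\underline{H},0)=c.\]

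The decisive new step is to establish the uniform-in-time lower bound
\[\hat V(x,t)\ge v_{\underline{H}}(x)\qquad\text{for every }(x,t)\in(-\infty,\underline{H}]\times\R,\]
which is dual to the supersolution bound $\hat V(\cdot,0)\le \psi_{-\bar l,\bar H}$ exploited in Lemma \ref{barH}. For this, given any $\epsilon\in(0,L_0-\underline{H})$ I set $L_2:=\underline{H}-\epsilon$. Since $L_2<L_0$, the proof of Proposition \ref{v_L}(iii) gives $-\mu(A(L_2))\,v_{L_2}'(L_2)>c$, and the same $l\to\infty$ convergence used in Lemma \ref{H-} upgrades this to $-\mu(A(L_2))\,w_{-l,L_2}'(L_2)>c$ for all sufficiently large $l$. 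Because $\liminf_{t\to\infty}[h(t)-ct]=\underline{H}>L_2$, the moving-frame subsolution construction of Lemma \ref{H-} applies and yields $u(x,t)\ge w_{-l,L_2}(x-ct)$ whenever $x-ct\in[-l,L_2]$ and $t$ is large enough. Passing to the limit along $\{t_n\}$, and using the fact that $\hat V$ is precisely the $C^{1+\beta',(1+\beta')/2}_{\mathrm{loc}}$ limit of $u$ in the moving frame $z=x-ct$, I obtain $\hat V(z,t)\ge w_{-l,L_2}(z)$ on $[-l,L_2]\times\R$. Letting $l\to+\infty$ (so that $w_{-l,L_2}\to v_{L_2}$) and then $\epsilon\downarrow 0$ (so that $v_{L_2}\to v_{\underline{H}}$ by continuous dependence) delivers the required bound.

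With this in hand the argument closes by a Hopf comparison. Put $\Phi:=\hat V-v_{\underline{H}}\ge 0$ on $(-\infty,\underline{H}]\times\R$; then $\Phi(\underline{H},0)=0$ and $\Phi$ satisfies the linear parabolic equation
\[\Phi_t-d\Phi_{xx}-c\Phi_x=\bigl[A(x)-b(\hat V+v_{\underline{H}})\bigr]\Phi\]
with bounded coefficients. If $\Phi\equiv 0$ on $(-\infty,\underline{H}]\times(-\infty,0]$, then $F(t)\equiv\underline{H}$ for $t\le 0$, so $F'(t)\equiv 0$ there, and the free-boundary relation forces $-\mu(A(\underline{H}))\,v_{\underline{H}}'(\underline{H})=c$, contradicting $\underline{H}<L_0$. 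The parabolic Hopf boundary lemma therefore applies at $(\underline{H},0)$ and gives $\Phi_x(\underline{H},0)<0$, i.e.\ $\hat V_x(\underline{H},0)<v_{\underline{H}}'(\underline{H})$. Multiplying by $-\mu(A(\underline{H}))>0$ produces
\[c=-\mu(A(\underline{H}))\,\hat V_x(\underline{H},0)>-\mu(A(\underline{H}))\,v_{\underline{H}}'(\underline{H})>c,\]
the last strict inequality again from $\underline{H}<L_0$. This contradiction forces $\underline{H}\ge L_0$.

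The main obstacle will be the uniform-in-$t$ lower bound $\hat V\ge v_{\underline{H}}$. The elliptic subsolutions $w_{-l,L}$ have a useful sign on the free-boundary derivative only when their right endpoint $L$ is strictly below $L_0$ \emph{and} strictly below the eventual infimum of $h(t)-ct$, so one cannot directly take $L=\underline{H}$. The trick is the coupled limit $l\to\infty$, $\epsilon\downarrow 0$, combined with continuity of $v_L$ in $L$, which makes the lower bound close up precisely at $x=\underline{H}$ and supplies the boundary contact needed for the Hopf step. Once this bound is in place, the compactness step is inherited verbatim from Lemma \ref{barH} and the Hopf argument is routine.
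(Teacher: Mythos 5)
Your overall architecture matches the paper's: contradiction from $\underline{H}<L_0$, the same rescaling/compactness extraction of an entire limit $(\hat V,F)$, the observation $F'(0)=0$ giving $-\mu(A(\underline H))\hat V_x(\underline H,0)=c$, and a Hopf-lemma comparison against a stationary profile whose boundary flux exceeds $c$. The gap is in the step you yourself flag as the main obstacle: the uniform lower bound $\hat V\ge v_{\underline H}$ on $(-\infty,\underline H]\times\R$. The subsolution estimate inherited from Lemma \ref{H-} is \emph{not} $u(x,t)\ge w_{-l,L_2}(x-ct)$ for $x-ct\in[-l,L_2]$; it is $u(x,t+T_3)\ge w_{-l',L_2}(x-c(t+T_1))$, i.e.\ in the frame $z=x-ct$ the bound reads $\hat V(z,t)\ge w_{-l',L_2}(z+c(T_3-T_1))$ and is valid only for $z\le L_2-c(T_3-T_1)$. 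The spatial shift $c(T_3-T_1)$ is a fixed positive quantity coming from the waiting times in Lemma \ref{H-} (one must wait until $ct>l'$ and until $u$ is close to $\frac{a}{b}$ on the relevant interval), and it does not tend to $0$ under your coupled limit $l\to\infty$, $\epsilon\downarrow0$; if anything $T_1$, and hence potentially $T_3-T_1$, grows with $l$. So the lower bound never reaches up to $z=\underline H$ (nor even to $z=L_2$), and there is no boundary contact at $(\underline H,0)$ to which the Hopf lemma could be applied.

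The paper closes exactly this gap differently: it uses the shifted bound only to manufacture a nontrivial nonnegative initial datum $V_0$ supported in the (nonempty, after choosing $l'$ and then $\underline l$ large) overlap $I$, solves the parabolic Dirichlet problem on $(-\underline l,\underline H)$ starting from $V_0$, and exploits convergence of that solution $w_{\underline H}(\cdot,t)$ to the positive steady state $\psi_{-\underline l,\underline H}$ together with the fact that $\hat V$ is an entire-in-time solution, so one may compare $\hat V(\cdot,t-s)$ with $w_{\underline H}(\cdot,t)$ and let $s\to\infty$. This parabolic sweep from $t=-\infty$ is what propagates the lower bound all the way to the endpoint $x=\underline H$ and yields $\hat V(\cdot,0)\ge\psi_{-\underline l,\underline H}$; a purely elliptic/translation argument cannot achieve this. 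If you replace your lower-bound step with this dynamic comparison, the rest of your Hopf argument goes through. (A minor additional point: for $L_2=\underline H-\epsilon<0$ you need Remark \ref{rm2.6} to conclude $-\mu(A(L_2))v_{L_2}'(L_2)>c$, since the monotonicity in Proposition \ref{v_L}(ii) is stated for $L\ge0$; the paper handles this via $H_*=\max\{\underline H,0\}$.)
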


\begin{proof}
Assume by contradiction that $\underline{H}<L_0$. By Lemma \ref{bounded}, 
$-C_3\le h(t)-ct \le C_3$ for $t>0$. Set$$\hat{g}(t):=h(t)-ct+2C_3,\ v(x,t):=u(x+ct-2C_3,t)\ \text{for}\ t>0.$$	
Let $t_n\to \infty$ be a positive sequence satisfying $\lim\limits_{n\to +\infty}[h(t_n)-ct_n]=\underline{H}$. We define $$v_n(x,t)=v(x,t+t_n),\quad \hat{g}_n(t)=\hat{g}(t+t_n).$$	By the same argument as in the proof of Lemma \ref{barH}, we obtain, by passing to a subsequence,
$$\hat{g}_n\to G\ \text{in}\ C_{loc}^{1+\frac{\beta}{2}}(\mathbb{R}^1)\quad \text{and}\quad v_n\to V\ \text{in}\ C_{loc}^{1+\beta,\frac{1+\beta}{2}}((-\infty,G(t)]\times \mathbb{R}^1)$$for some $\beta\in (0,1)$; moreover, 
 $\hat{V}(x,t):=V(x+2C_3,t)$ and $F(t):=G(t)-2C_3$ satisfy
\begin{equation*}\label{jianV}
\left\{
\begin{aligned}
&V_t-dV_{xx}-cV_x=A(x)V-bV^2,\ &t&\in \mathbb{R}^1,\ x\in (-\infty,F(t)),\\
&V\left(F(t),t\right)=0,\ -\mu(A(F(t))) V_x(F(t),t)=F'(t)+c,\ &t&\in \mathbb{R}^1. \end{aligned}
\right.
\end{equation*}
Due to $\underline{H}\le F(t)\le \bar{H}\ \text{for}\ t\in \mathbb{R}^1$ and $F(0)=\lim\limits_{n\to +\infty}[h(t_n)-ct_n]=\underline{H}$, we have  $F^{\prime}(0)=0$, and hence 
\[
-\mu(A(\underline{H})) \hat{V}_x(\underline{H},0)=c.
\]

 Due to $\underline{H}<L_0$, by Proposition 1.1 and Remark \ref{rm2.6} we have $V_{\underline H}(x)=V_{H_*}(x-\underline H+H_*)$, where $H_*=\max\{\underline H, 0\}<L_0$. It follows that
 \[
 -\mu(A(\underline H))V'_{\underline H}(\underline H)=-\mu(A(H_*))V'_{H_*}(H_*)>\mu(A(L_0))V_{L_0}'(L_0)=c.
 \]
 Similar to the proof of Lemma \ref{psM}, we find that for all  large constant $\underline{l}>0$,
\begin{equation}\label{psiunderlinel}
	-\mu(A(\underline{H}))\psi'_{-\underline{l},\underline{H}}(\underline{H})>c,
\end{equation}
where $\psi_{-\underline{l},\underline{H}}$
is the unique positive solution of
\begin{equation*}
\left\{
\begin{array}{l}
-d\psi''-c\psi'=A(x)\psi-b\psi^2,\quad -\underline{l}<x<\underline{H},\\
\psi(-\underline{l})=0,\quad \psi(\underline{H})=0.
\end{array}
\right.
\end{equation*}

On the other hand, from the proof of Lemma \ref{H-}, we have
\[
u(x,t+T_3)\geq \underline u(x, t+T_1)=w_{-l',L_2}(x-c(t+T_1))
\]
for 
\[\mbox{$x\in [\eta_1(t+T_1), \eta_2(t+T_1)]=[-l'+c(t+T_1), L_2+c(t+T_1)]$,\ $t\geq 0$.}
\]
 It follows that
\[
v(x,t)=u(x+ct-2C_3,t)\geq w_{-l', L_2}(x-2C_3+c(T_3-T_1))
\]
for 
\[
x\in [-l'-c(T_3-T_1)+2C_3, L_2-c(T_3-T_1)+2C_3],\; t\geq T_3.
\]
 This implies that
\begin{equation}\label{new}
\hat V(x,t)\geq w_{-l', L_2}(x+c(T_3-T_1)) \mbox{ for } x\in [-l'-c(T_3-T_1), L_2-c(T_3-T_1)],\; t\in\mathbb R^1.
\end{equation}

By choosing $l'$ and then $\underline l$  large enough, we can guarantee that
\[
I:=(-\underline{l},\underline{H})\cap (-l'-c(T_3-T_1),L_2-c(T_3-T_1))\not= {\O}.
\]
We then
choose $V_0\in C([-\underline{l},\underline{H}])$ nonnegative and satisfy $0< V_0(x) \leq w_{-l',L_2}(x+c(T_3-T_1))$ for $x\in I$, $V_0(x)=0$ for $x\in [-\underline l, \underline H]\setminus I$, and consider the solution $w_{\underline{H}}$ of
 \begin{equation*} \begin{cases}
 w_t-dw_{xx}-cw_x=A\left(x\right)w-bw^2,\ &t>0,\ -\underline{l}<x<\underline{H},\\
 w(-\underline{l},t)=0,\ w\left(\underline{H},t\right)=0 &t>0,\\
 w(x,0)=V_0(x),\ &x\in [-\underline{l},\underline{H}].    	
 \end{cases}
 \end{equation*}It is well known that $w_{\underline{H}}(x,t)\to \psi_{-\underline{l},\underline{H}}(x)$ uniformly for $x\in [-\underline{l},\underline{H}]$ as $t\to +\infty$. By \eqref{new} and the choice of $V_0$, and the fact that $F(t)\geq \underline H$, we can apply the comparison principle to obtain
  $$\hat{V}(x,t-s)\geq w_{\underline{H}}(x,t),\quad \forall s>0,\ t>0,\ -\underline{l}<x<\underline{H}.
  $$
 Choosing $s_n\to \infty$, we have $w_{\underline{H}}(x,s_n)\leq \hat{V}(x,0)$ for $-\underline{l}<x<\underline{H}$.
 Letting $n\to \infty$, we obtain$$\psi_{-\underline{l},\underline{H}}(x)\leq \hat{V}(x,0)\quad \mbox{for}\ -\underline{l}<x<\underline{H}.$$
Hence we can use the strong maximum principle and the Hopf boundary lemma to conclude that $$\psi'_{-\underline{l},\underline{H}}(\underline{H})>\hat{V}_x(\underline{H},0)=-\frac{c}{\mu(A(\underline{H}))},$$
which contradicts \eqref{psiunderlinel}.
The proof is complete.\end{proof}

 From Lemmas \ref{barH} and \ref{underlineH} we immediately obtain the following result.

\begin{Theorem}\label{ht-ct=L0}
	 $\lim\limits_{t\to +\infty}[h(t)-ct]=L_0$.
\end{Theorem}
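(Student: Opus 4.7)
The plan is to deduce Theorem \ref{ht-ct=L0} immediately from Lemmas \ref{barH} and \ref{underlineH} by a squeeze argument on $h(t)-ct$. By definition one always has $\underline{H}\le\bar{H}$. Lemma \ref{barH} supplies the upper bound $\bar{H}\le L_0$, while Lemma \ref{underlineH} supplies the matching lower bound $\underline{H}\ge L_0$. Chaining these three inequalities yields
\[
L_0\;\le\;\underline{H}\;\le\;\bar{H}\;\le\;L_0,
\]
which forces $\underline{H}=\bar{H}=L_0$. Hence $\lim\limits_{t\to+\infty}[h(t)-ct]$ exists and equals $L_0$, as claimed.

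No further obstacle is expected at this step: all of the analytic content has been absorbed into the two preceding lemmas, so the present theorem is essentially a one-line corollary. For orientation, the lemmas each extract a limiting profile $(\hat V,F)$ along a sequence $t_n$ realizing $\bar H$ (respectively $\underline H$), observe that $F$ attains its global maximum (respectively minimum) at $t=0$ so that $F'(0)=0$, and then compare with a stationary semi-wave $\psi_{-\ell,\bar H}$ (respectively $\psi_{-\ell,\underline H}$) to contradict the strict monotonicity of $L\mapsto -\mu(A(L))v_L'(L)$ together with the defining identity $-\mu(A(L_0))v_{L_0}'(L_0)=c$ from Proposition \ref{v_L}. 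Given those two lemmas, the only thing left to write here is the sandwich inequality and its conclusion.
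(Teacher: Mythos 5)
Your proposal is correct and matches the paper exactly: the paper states this theorem as an immediate consequence of Lemmas \ref{barH} and \ref{underlineH}, which is precisely the sandwich $L_0\le\underline{H}\le\bar{H}\le L_0$ you wrote. Your parenthetical summary of how those two lemmas are proved also accurately reflects the paper's arguments.
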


\subsection{Behaviour of $u(x,t)$} In this subsection, we completely determine the long-time behaviour of $u(x,t)$.

\begin{Lemma}\label{Mtgoeswuq}
	Assume that $(u(x,t),h(t))$ is the unique solution of problem \eqref{free boundary equation} and $0<c\leq c_0$. If there exists a constant $C_0$ such that $h(t)-ct>C_0$ for $t>0$, then
	$$\lim_{M\to+\infty}\limsup\limits_{t\to +\infty}\left[\max \limits_{x\in [0,ct-M]}\left|  u(x,t)-\frac{a}{b}\right| \right]=0.$$
\end{Lemma}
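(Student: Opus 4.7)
The plan is to split the estimate into matching upper and lower bounds. The upper bound $\limsup_{t\to\infty}\sup_{x}u(x,t)\le a/b$ is immediate: since $A(\xi)\le a$ for all $\xi\in\mathbb R$, comparison with the logistic ODE $\rho'=a\rho-b\rho^2$ starting from $\rho(0)=\max\{\|u_0\|_\infty,a/b\}$ gives $u(x,t)\le\rho(t)\to a/b$ uniformly in $x$. This takes care of $[u(x,t)-a/b]_+$; the bulk of the work is the matching lower bound on $[0,ct-M]$ for $M$ large.

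For the lower bound I would argue by contradiction and compactness. Suppose the lemma fails; then there exist $\varepsilon_0>0$ and sequences $M_n\to+\infty$, $t_n\to+\infty$, $x_n\in[0,ct_n-M_n]$ with $u(x_n,t_n)\le a/b-\varepsilon_0$. If $\{x_n\}$ stays bounded, pass to a subsequence $x_n\to x_\infty$; then \eqref{cuab} combined with the uniform parabolic H\"older regularity of $u$ inherited from Lemma~\ref{estimate} and interior Schauder estimates forces $u(x_n,t_n)\to a/b$, a contradiction. Hence $x_n\to+\infty$, and since $y_n:=ct_n-x_n\ge M_n\to\infty$, both $x_n$ and $y_n$ diverge.

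Define $v_n(\xi,\tau):=u(\xi+x_n,\tau+t_n)$. Its spatial domain $[-x_n,\,h(\tau+t_n)-x_n]$ exhausts $\mathbb R$ (the right endpoint grows by hypothesis, since $h(\tau+t_n)-x_n\ge c\tau+y_n+C_0\to\infty$), and the shifted coefficient $A(\xi-c\tau-y_n)$ tends to $a$ pointwise because $y_n\to\infty$. Interior parabolic $L^p$/Schauder estimates plus the a priori bound $0<u\le C_1$ from Lemma~\ref{estimate} yield a subsequence $v_n\to V$ in $C^{1,0}_{\mathrm{loc}}(\mathbb R\times\mathbb R)$, where $V$ is a bounded non-negative entire solution of $V_\tau=dV_{\xi\xi}+aV-bV^2$ with $V\le a/b$ and $V(0,0)\le a/b-\varepsilon_0$. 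To finish I need the matching lower bound $V\ge a/b$, which will force $V\equiv a/b$ and yield the contradiction. For this I would revisit the moving subsolution from the proof of Lemma~\ref{H-} with the auxiliary length $l'$ replaced by a sequence $l'_n$ chosen so that (i) the threshold time $T_3(l'_n)$ beyond which $u(x,t)\ge w_{-l'_n,L_2}(x-ct)$ holds on the band $[ct-l'_n,ct+L_2]$ satisfies $T_3(l'_n)\le t_n$, and (ii) $l'_n-y_n\to+\infty$, so that the argument $\xi-c\tau-y_n$ lies deep in the interior of $[-l'_n,L_2]$ for each fixed compact $|\xi|,|\tau|\le R$. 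Then on such a compact and for $n$ large, $v_n(\xi,\tau)\ge w_{-l'_n,L_2}(\xi-c\tau-y_n)$, and using the intermediate subsolution $U_{l'_n}$ from the proof of Lemma~\ref{H-} (which satisfies $w_{-l'_n,L_2}\ge U_{l'_n}$ and tends to $a/b$ on asymptotic interiors of $[-l'_n,0]$ as $l'_n\to+\infty$), the right-hand side tends to $a/b$, giving $V\ge a/b$ on the compact.

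The delicate step is realizing (i) and (ii) simultaneously: it amounts to quantifying how fast $T_3(l')$ grows with $l'$, which reflects only the qualitative rate in \eqref{cuab}. I would handle this by a diagonal argument along $t_n\to\infty$: for each fixed $l'$ and $\varepsilon$, \eqref{cuab} supplies a valid $T_3(l',\varepsilon)$ on the compact $[\eta_1(T_1),\eta_2(T_1)]$, and since $t_n\to\infty$ one can pick $l'_n$ along a sufficiently slow subsequence so that $t_n\ge T_3(l'_n,\varepsilon_n)$ while $l'_n\ge y_n+n$, thereby meeting both constraints. The case $c=c_0$ (where $L_0=0$ makes the choice $L_2\in(0,L_0)$ unavailable) is handled by the same scheme upon replacing $w_{-l',L_2}$ by the analogue in Remark~\ref{rm2.6}, namely a shift of $q_{c_0}$.
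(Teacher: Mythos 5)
Your upper bound (ODE comparison giving $\limsup_{t\to\infty}\sup_x u\le a/b$) is fine and matches the paper. The gap is exactly at the step you flag as ``delicate'', and it is not merely delicate --- as set up, it cannot be closed. In the regime where $x_n\to\infty$ slowly (say $x_n=o(t_n)$), you have $y_n=ct_n-x_n\sim ct_n$, so your requirement $l'_n-y_n\to+\infty$ forces $l'_n\gtrsim ct_n$. But the construction of $T_3(l')$ in Lemma~\ref{H-} first requires $T_1>l'/c$ and then an additional, \emph{unquantified} waiting time for \eqref{cuab} to push $u(\cdot,T_3)$ above $a/b-\epsilon$ on the compact $[-l'+cT_1,\,L_2+cT_1]$, whose diameter is of order $l'_n\sim ct_n$. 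Since \eqref{cuab} is purely qualitative (uniform convergence on each fixed compact, with no rate), there is no way to guarantee $T_3(l'_n)\le t_n$; the two constraints genuinely conflict, and the contradiction hypothesis does not let you exclude this regime. So the lower bound $V\ge a/b$ for your entire limit solution is not established, and without it the Liouville step fails (bounded entire solutions of $V_\tau=dV_{\xi\xi}+aV-bV^2$ between $0$ and $a/b$ need not be constant).

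The missing idea, which is how the paper proceeds, is to use a \emph{different traveling barrier}: the semi-wave $v_\varepsilon$ of the perturbed equation $-dv''-cv'=av-(b+\varepsilon)v^2$ on $(-\infty,0]$ with $v_\varepsilon(0)=0$, $v_\varepsilon(-\infty)=\frac{a}{b+\varepsilon}$ (which exists since $c\le c_0<2\sqrt{ad}$). Setting $\underline u_\varepsilon(x,t)=v_\varepsilon(x-c(t-T))$ on $\{0\le x\le c(t-T),\ t\ge T\}$, the left end of this region is the \emph{fixed} point $x=0$, where \eqref{cuab} needs to be invoked only on the single compact $\{0\}$ to get $u(0,t)>\frac{a}{b+\varepsilon}\ge \underline u_\varepsilon(0,t)$ for $t\ge T_\varepsilon$; the right end $x=c(t-T)$ stays inside $[0,h(t)]$ precisely because of the hypothesis $h(t)-ct>C_0$ (after enlarging $T$ so that $C_0>-cT$). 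Comparison then gives $u(x,t)\ge v_\varepsilon(x-c(t-T))\ge v_\varepsilon(cT-M)$ for all $x\in[0,ct-M]$ at once, and letting $M\to\infty$ then $\varepsilon\to 0$ finishes the proof --- no compactness, no case split on $x_n$, and no tuning of $l'$ against $T_3$ is needed. You should replace your $w_{-l',L_2}$ barrier by this half-line semi-wave anchored at $x=0$.
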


\begin{proof}
Assume the assertion of the lemma is false. Then we could find $\sigma>0$ and a sequence of positive numbers $M_n\to +\infty$ such that
\[
\limsup\limits_{t\to +\infty}\left[\max \limits_{x\in [0,ct-M_n]}\left|u(x,t)-\frac{a}{b}\right| \right]>\sigma \;\mbox{  for all $n\ge1$.} 
\]
 Thus there exist two sequences of numbers $\{x_n\}$ and $\{t_n\}$ such that	
 \[
 \mbox{$\lim\limits_{n\to +\infty}t_n=+\infty,\ 0\le x_n\le ct_n-M_n$, and}
 \]
\begin{equation}\label{jianu}
	\left| {u}(x_n,t_n)-\frac{a}{b}\right|>\sigma.
\end{equation}

By \eqref{cuab}, for any $\varepsilon>0$ small,
  there is a large $T=T_{\varepsilon}>0$ such that
  $${u}(0,t)>\frac{a}{b+\varepsilon}\ \text{for}\ t\geq T.
  $$
We may also assume that
\[
C_0>-cT \mbox{ and so } h(t)\geq  ct+C_0>c(t-T).
\] 

Since $0<c\leq c_0<2\sqrt{ad}$, the problem
	\begin{equation*}
	 \left\{\begin{array}{l}
-dv''-cv'=av-(b+\varepsilon)v^2,\quad x<0,\\
		v(0)=0
		\end{array} \right.	\end{equation*}
admits a unique solution $v_{\varepsilon}(x)$,  and it satisfies $v'_{\varepsilon}(x)<0$ and $v_{\varepsilon}(-\infty)=\frac{a}{b+\varepsilon}$ (see the proof of Proposition 2.1 in \cite{BDu2012}). Define $$\underline{u}_{\varepsilon}(x,t)=v_{\varepsilon}(x-c(t-T))\ \text{for}\ x\leq c(t-T) \ \text{and}\ t\geq T.$$
   Then for $t\geq T$, we have $$\frac{\partial \underline{u}_{\varepsilon}}{\partial t}-d\frac{\partial^2 \underline{u}_{\varepsilon}}{\partial x^2}
   \leq a \underline{u}_{\varepsilon}-b\underline{u}^2_{\varepsilon}\quad \text{for}\ 0< x< c(t-T),$$
   $$\underline{u}_{\varepsilon}(0,t)\leq \frac{a}{b+\varepsilon}<u(0,t)\quad \text{and}\quad \underline{u}_{\varepsilon}(c(t-T),t)=0<u(c(t-T),t).$$
  We can now use the comparison principle over $\{(x,t): 0\leq x\leq c(t-T),\ t\geq T\}$ to deduce 
  $${u}(x,t)\geq \underline{u}_{\varepsilon}(x,t) \mbox{ in this region}.
  $$
 Since $t_n\geq T$ and $0\leq x_n\leq ct_n-M_n\leq c(t_n-T)$ for all large $n$, we get $${u}(x_n,t_n)\geq \underline{u}_{\varepsilon}(x_n,t_n)=v_{\varepsilon}(x_n-c(t_n-T))\geq v_{\varepsilon}(-M_n+cT),$$
 and \begin{equation}\label{hatu}\liminf_{n\to +\infty}{u}(x_n,t_n)\ge \frac{a}{b+\varepsilon}.\end{equation}

On the other hand, applying the comparison principle, we conclude that
\begin{equation}\label{hatphi}
	{u}(x_n,t_n)\le \hat{\phi}(t_n)\to \frac{a}{b}\ \text{as}\ n\to +\infty,
\end{equation}
where $\hat{\phi}(t)$ is the unique solution of
\begin{equation*}
\left\{
\begin{array}{l}
\phi'(t)=a\phi-b\phi^2,\quad t>0,\\
\phi(0)={\lVert u_0\rVert}_{\infty}.\\
\end{array}
\right.
\end{equation*}
By virtue of \eqref{hatphi} and \eqref{jianu}, we obtain \begin{equation}\label{hatuxiao}
	{u}(x_n,t_n)< \frac{a}{b}-\sigma\ \text{for all large}\ n,
\end{equation}
which contradicts \eqref{hatu} if $\varepsilon>0$ is small enough. The proof is complete.\end{proof}

\begin{Theorem}\label{a/b}
		$\lim\limits_{t\to +\infty}\| u(\cdot,t)-v_{L_0}(\cdot-h(t)+L_0)\|_{L^{\infty}([0,h(t)])}=0$.
\end{Theorem}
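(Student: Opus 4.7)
The plan is to argue by contradiction, exploiting Theorem \ref{ht-ct=L0} to freeze the position of the free boundary and then invoking a parabolic shift-compactness argument to reduce everything to a Liouville-type identification of a limiting profile on the fixed strip $(-\infty,L_0]$.

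Suppose the conclusion fails. Then there exist $\varepsilon>0$ and sequences $t_n\to\infty$, $x_n\in[0,h(t_n)]$ with $|u(x_n,t_n)-v_{L_0}(x_n-h(t_n)+L_0)|\ge\varepsilon$. Set $y_n:=x_n-h(t_n)+L_0$; along a subsequence either $y_n\to-\infty$ or $y_n\to y^*\in(-\infty,L_0]$. In the first case, Theorem \ref{ht-ct=L0} yields $x_n-ct_n=y_n+o(1)\to-\infty$, so Lemma \ref{Mtgoeswuq} forces $u(x_n,t_n)\to a/b$; combined with $v_{L_0}(-\infty)=a/b$, this contradicts the separation by $\varepsilon$.

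So I may focus on $y_n\to y^*$. I mimic the shift-and-compactness construction of Lemma \ref{barH}: form $v_n(y,t):=v(y,t+t_n)$ and $\hat g_n(t):=\hat g(t+t_n)$, extract $C^{1+\beta,(1+\beta)/2}_{\mathrm{loc}}$-limits $(V,G)$, and set $\hat V(x,t):=V(x+2C_3,t)$, $F(t):=G(t)-2C_3$. The key new input is that Theorem \ref{ht-ct=L0} now forces $\hat g_n\to L_0+2C_3$, so $F(t)\equiv L_0$ and $F'\equiv 0$; hence $\hat V$ lives on the fixed strip $(-\infty,L_0)\times\mathbb R$ and satisfies
\[
\hat V_t-d\hat V_{xx}-c\hat V_x=A(x)\hat V-b\hat V^2,\quad\hat V(L_0,t)=0,\quad -\mu(A(L_0))\hat V_x(L_0,t)=c.
\]
Moreover Lemma \ref{Mtgoeswuq} supplies $\hat V(-l,t)\to a/b$ as $l\to\infty$, uniformly for $t$ on compact subsets of $\mathbb R$.

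The crux is to show $\hat V(x,t)\equiv v_{L_0}(x)$, which I do by sandwiching $\hat V(\cdot,t_0)$ between two stationary solutions on $[-l,L_0]$ for arbitrary $t_0$. For the upper bound I reuse Lemma \ref{psM}: the parabolic solution on $[-l,L_0]$ with constant data $M$ at $-l$, zero at $L_0$ and initial profile $\equiv M$ decreases monotonically to $\psi_{-l,L_0}$; since $\hat V\le M$, time-shifting and comparing gives $\hat V(\cdot,t_0)\le \psi_{-l,L_0}$, and \eqref{ps} then yields $\hat V(\cdot,t_0)\le v_{L_0}$ as $l\to\infty$. For the lower bound I mirror the argument of Lemma \ref{H-} using the zero-boundary stationary solution $w_{-l,L_0}$: the corresponding parabolic evolution started from a small positive bump dominated by $\hat V(\cdot,0)$ (supplied by the moving-band lower bound built in Lemma \ref{H-}, together with $\hat V(-l,t)\to a/b$) converges to $w_{-l,L_0}$, giving $\hat V(\cdot,t_0)\ge w_{-l,L_0}\to v_{L_0}$. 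Squeezing yields $\hat V\equiv v_{L_0}$, and the local uniform convergence $v_n\to V$ implies $u(x_n,t_n)\to\hat V(y^*,0)=v_{L_0}(y^*)$, contradicting the separation by $\varepsilon$.

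The main technical obstacle is the lower bound: I must choose a positive initial profile for the subsolution that is compactly supported inside a region where the transferred lower bound $w_{-l',L_2}(\cdot+c(T_3-T_1))$ from Lemma \ref{H-} applies, and then pass to the iterated limit $l\to\infty$ without losing comparability with $v_{L_0}$; the upper-bound side is comparatively routine thanks to the constant supersolution $M$.
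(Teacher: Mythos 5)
Your proposal is correct, and it reaches the conclusion by a route that is structurally close to, but more self-contained than, the paper's. Both arguments split $[0,h(t)]$ into a back region $[0,ct-M]$, handled identically via Lemma \ref{Mtgoeswuq} together with $v_{L_0}(-\infty)=a/b$, and a front region near $h(t)$. For the front region the paper simply passes to the frame $V(x,t)=u(x+h(t),t)$ and cites the proof of Theorem 3.13 of \cite{DuWeiZhou2018} for the convergence $V(\cdot,t)\to v_{L_0}(\cdot+L_0)$ in $C^{1+\alpha}_{loc}((-\infty,0])$; you instead reconstruct that step explicitly: Theorem \ref{ht-ct=L0} pins the limiting free boundary at $F\equiv L_0$, and the entire-in-time limit $\hat V$ is squeezed between $\psi_{-l,L_0}$ (the decreasing parabolic evolution started from the constant supersolution $M$, exactly as in Lemma \ref{barH}) and $w_{-l,L_0}$ (the increasing evolution started from the compactly supported bump furnished by \eqref{new}, exactly as in Lemma \ref{underlineH}), both of which converge to $v_{L_0}$ as $l\to\infty$. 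All the ingredients you invoke are already established in Section 3; note only that the restrictions $L_1>L_0$ and $L_2<L_0$ in Lemmas \ref{psM} and \ref{H-} are used solely for the final sign of the boundary derivative, not for the convergences $\psi_{-l,L}\to v_L$ and $w_{-l,L}\to v_L$, so these do apply at $L=L_0$. What your version buys is independence from the external citation and a clean Liouville-type identification of the entire solution; what the paper's version buys is brevity. The one point to write out with care in a full version is the passage $u(x_n,t_n)\to \hat V(y^*,0)$ when $y^*=L_0$, which requires convergence up to the moving boundary --- this is exactly what the straightened variable $w_n(y,t)$ introduced in the proof of Lemma \ref{barH} provides.
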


\begin{proof}
 Define
	$$g(t):=h(t)-ct,\ V(x,t):=u(x+h(t),t)\ \text{for}\ t>0\ \text{and}\ -h(t)<x<0.$$
	For any sequence $\{t_n\}$ satisfying $t_n\to+\infty$, denote
	$$g_n(t)=g(t+t_n),\quad V_n(x,t)=V(x,t+t_n).$$
	Then due to Theorem \ref{ht-ct=L0} we can duplicate the demonstration in the proof of Theorem 3.13 in \cite{DuWeiZhou2018} to conclude that
	$$\lim_{t\to +\infty}V(\cdot,t)=v_{L_0}(\cdot+L_0)\ \text{in}\ C^{1+\alpha}_{loc}((-\infty,0])$$
	for some $\alpha\in(0,1).$
	It follows that
	\begin{equation}\label{ct-Mh(t)}
	\lim_{t\to+\infty}\left[\max \limits_{ct-M\le x\le h(t)}\left|  u(x,t)-v_{L_0}(x-h(t)+L_0)\right|\right]=0\ \mbox{ for every}\ M>0.
	\end{equation}
	Thanks to $v_{L_0}(-\infty)=\frac{a}{b}$, by Lemma \ref{Mtgoeswuq}, we have
	\begin{equation}\label{epislonM}
		\bar\epsilon(M):=\limsup_{t\to +\infty}\left[\max \limits_{0\le x\le ct-M}\left|u(x,t)-v_{L_0}(x-h(t)+L_0)\right| \right]\to 0\ \text{as}\ M\to +\infty.
	\end{equation}
	From \eqref{ct-Mh(t)} and \eqref{epislonM}, we deduce that
	$$\limsup_{t\to +\infty}\left[\max \limits_{0\le x\le h(t)}\left|u(x,t)-v_{L_0}(x-h(t)+L_0)\right| \right]=\bar\epsilon(M).$$
	Letting $M\to+\infty$, then the desired conclusion follows.\end{proof}

Clearly the conclusion in case (ii) of Theorem \ref{c_0>c} follows directly from Theorems \ref{ht-ct=L0} and \ref{a/b}.

\section{Proof of Theorem \ref{c_0=c} }

 If $h_{\infty}<+\infty$, it follows from Theorem \ref{0xiaoyucvanishing} that
	 $$\lim_{t\to +\infty}\|u(x,t)\|_{C([0,h(t)])}=0.$$

Next we consider the case $h_\infty=+\infty$.
Let $(\hat{u}(x,t),\bar{h}(t))$ be the unique solution of problem \eqref{free boundary equation} with $A(x-ct)$ replaced by $a$ and $\mu(A(h(t)-ct))$ replaced by $\mu(a)$. Since $A(x-ct)\le a$ and $\mu(A(\bar{h}(t)-ct))\leq \mu(a)$, by the comparison principle, we conclude that
	$$	0\le u(x,t)\le \hat{u}(x,t)\ \text{for}\ t\ge 0,\ x\in [0,h(t)],$$	and
	$$	h(t)\le \bar{h}(t)\ \text{for}\ t\ge 0.$$

 Since $h_{\infty}=+\infty$, necessarily $\overline{h}_{\infty}=+\infty$, and we conclude from Theorem 1.2 of \cite{DuMZhou2014} that 
 \begin{equation*}\label{disH}
\lim_{t\to+\infty}[\bar{h}(t)-c_0t]=\hat{H}
\end{equation*}
for some $\hat{H}\in \mathbb{R}^1$, and hence 
$$\bar{H}=\limsup_{t\to+\infty}[h(t)-c_0t]\leq \hat{H}<+\infty.
$$

We show below that 
$$
\underline{H}=\liminf_{t\to+\infty}[h(t)-c_0t]>-\infty.
$$
Firstly, we choose $T>0$ large so that $h_T=\min\{h(T), c_0T\}>\frac{\pi}{2}\sqrt{\frac{d}{a}}$. Fix $\widetilde{u}_0\in C^2([0,h_T])$ satisfying $0<\widetilde{u}_0(x) \le \min\{u(x,T), Q(x,T)\}$ for $x\in [0, h_T)$, $\widetilde u_0'(0)=0$, where $Q(x,t)=q_{c_0}(x-c_0t)$. Then the auxiliary free boundary problem
$$	\left\{
	\begin{aligned}
	&\widetilde{u}_t=d\widetilde{u}_{xx}+A(x-c_0t)\widetilde{u}-b\widetilde{u}^2,&t&>T,\ 0<x<\widetilde{h}(t),\\
	&\widetilde{u}_x(0,t)=\widetilde{u}(\widetilde{h}(t),t)=0,  &t&>T,     \\
	&\widetilde{h}'(t)=-\mu(A(\widetilde{h}(t)-c_0t))\widetilde{u}_x(\widetilde{h}(t),t),&t&>T, \\
	&\widetilde{h}(T)=h_{T},\ \widetilde{u}(x,T)=\widetilde{u}_0(x), &0&\leq x \leq h_{T},
	\end{aligned}
	\right.	
	$$ 
	has a unique  solution $(\widetilde{u}(x,t),\widetilde{h}(t))$. By the comparison principle, for $t>T$,
$$\widetilde{u}(x,t)\leq u(x,t)\ \mbox{for}\ x\in [0,\widetilde{h}(t)],\quad \widetilde{h}(t)\leq h(t),
$$
and
$$
\widetilde{u}(x,t)\leq Q(x,t)\ \mbox{for}\ x\in [0,\widetilde{h}(t)],\quad \widetilde{h}(t)\leq c_0t.
$$
Hence for $t>T$ and $x\in [0,\widetilde{h}(t)]$, we have $A(x-c_0t)=a$ and $\mu(A(\widetilde{h}(t)-c_0t))=\mu(a)$ in view of the definitions of $A(\xi)$ and $\mu(\zeta)$. It follows from this fact and $h_T>\frac{\pi}{2}\sqrt{\frac{d}{a}}$ that $\widetilde{h}_{\infty}=+\infty$ (see Theorem 3.4 in \cite{DuLin2010}) and $\displaystyle{\lim_{t\to+\infty}[\widetilde{h}(t)-c_0t]=\check{H}}$ for some $\check{H}\in\R^1$ (see Theorem 1.2 in \cite{DuMZhou2014}). Thus 
$$\underline{H}=\liminf_{t\to+\infty}[h(t)-c_0t]\geq \liminf_{t\to+\infty}[\widetilde{h}(t)-c_0t]=\check{H}>-\infty,
$$
as we wanted.

We now continue our discussion according to the following three possibilities: 
\[
\mbox{(I) $\bar{H}<0$,\ (II) $\bar{H}>0$, (III) $\bar{H}=0$.}
\]
In case (I), there exists $\bar{T}>0$ such that	
$$ 
h(t)-c_0t<0\ \text{for}\ t\ge \bar{T}.
$$
By the definitions of $A(\xi)$ and $\mu(\zeta)$, we see that for $t\ge \bar{T}$,
	$$ 
	A(x-c_0t)=a\ \mbox{for}\ x\in[0,h(t)],\ \mbox{and}\ \mu(A(h(t)-c_0t))=\mu(a).
	$$
	Therefore, from \cite{DuMZhou2014}, we see that 
$$\lim_{t\to +\infty}[h(t)-c_0t]= \bar H,\quad \lim_{t\to +\infty}\| u(\cdot,t)-q_{c_0}(\cdot-h(t))\|_{L^{\infty}([0,h(t)])}=0.
$$

In case (II), for any $L>0$, let $v_L$ denote the unique positive solution of
\begin{equation*}
\left\{
\begin{array}{l}
-dv''-c_0v'=A(x)v-bv^2,\quad -\infty<x<L,\\
\;\;\; v(L)=0.
\end{array}
\right.
\end{equation*}
By Proposition \ref{v_L} (ii),
 $$-\mu(A(L))v_L'(L)<c_0,\ \forall L>0.$$
 We may now take $L=\bar H$ and
 use the same argument as in the proof of Lemma \ref{barH} to deduce a contradiction. Therefore case (II) cannot occur.

In case (III), we have $\bar{H}=0$.   Choose $\overline{C}>0$ such that $-\overline{C}\leq h(t)-c_0t<\overline{C}$ for all $t\ge0$.
Let $\{t_n\}$ be an arbitrary positive sequence satisfying
 $t_n\to +\infty$. Define
$$\check{g}(t)=h(t)-c_0t-2\overline{C},\quad v^{\ast}(x,t)=u(x+c_0t+2\overline{C},t),$$
$$V^{\ast}_n(x,t)=v^{\ast}(x,t+t_n),\quad \check{g}_n(t)=\check{g}(t+t_n),$$
$$\tilde{y}=\frac{x}{\check{g}_n(t)}\quad \text{and}\quad w^{\ast}_n(\tilde{y},t)=V^{\ast}_n(x,t).$$
By the same argument as in the proof of Theorem \ref{ht-ct=L0}, we deduce that, by passing to a subsequence, as $n\to+\infty$, $$V^{\ast}_n\to \tilde{V}\ \text{in}\ C_{loc}^{1+\bar{\alpha},\frac{1+\bar{\alpha}}{2}}((-\infty,\check{G}(t)]\times\mathbb{R}^1),\quad \check{g_n}\to\check{G}\ \text{in}\ C_{loc}^{1+\frac{\bar{\alpha}}{2}}(\mathbb{R}^1),$$
where $\bar{\alpha}\in(0,1)$ and $(\tilde{V},\check{G})$ satisfies
\begin{equation*}
\left\{
\begin{aligned}
&V_t-dV_{xx}-c_0V_x=A(x+2\overline{C})V-bV^2,\ &t&\in \mathbb{R}^1,\ x\in (-\infty,\check{G}(t)),\\
&V\left(\check{G}(t),t\right)=0,\ -\mu(A(\check{G}(t)+2\overline{C})) V_x(\check{G}(t),t)=\check{G}'(t)+c_0,\ &t&\in \mathbb{R}^1.\end{aligned}
\right.
\end{equation*}
Let
$$\bar{V}(x,t)=\tilde{V}(x-2\overline{C},t)\ \text{and}\ \check{F}(t)=\check{G}(t)+2\overline{C}.$$
We observe that $\check{F}(t)\le0$, and hence $(\bar{V},\check{F})$ satisfies
\begin{equation*}\label{barVcF}
\left\{
\begin{aligned}
&V_t-dV_{xx}-c_0V_x=aV-bV^2,\ &t&\in \mathbb{R}^1,\ x\in (-\infty,\check{F}(t)),\\
&V\left(\check{F}(t),t\right)=0,\ -\mu(a) V_x(\check{F}(t),t)=\check{F}'(t)+c_0,\ &t&\in \mathbb{R}^1. \end{aligned}
\right.
\end{equation*}
  By the results in section 4.2 of \cite{DuMZHou2015}, we see that there exists $F_0\leq 0$ such that
  $$
  \check{F}(t)\equiv F_0,\quad \bar{V}(x,t)\equiv{q}_{c_0}(x-F_0)\ \text{for}\ x\in(-\infty,F_0],\ t\in\mathbb{R}^1.
  $$
  If  $F_0<0$, then we may repeat the argument in section 3.3 of \cite{DuMZhou2014} to conclude that
  $$\lim_{t\to +\infty}[h(t)-c_0t]=F_0<0,$$
  a contradiction to $\bar H=0$. Hence we always have $F_0=0$. Since $\{t_n\}$ is an arbitrary sequence converging to $+\infty$, this implies that
   $$\lim_{t\to +\infty}[h(t)-c_0t]=0,$$  
   and
   \begin{equation*}
  	\lim_{t\to +\infty}u(x+c_0t,t)=q_{c_0}(x)\ \text{in}\ C_{loc}^{0}(-\infty,0],
  \end{equation*}
  which infers that
  \begin{equation}\label{uxct}
  \lim_{t\to +\infty}u(x+h(t),t)=q_{c_0}(x)\ \text{in}\ C_{loc}^{0}(-\infty,0].
  \end{equation}
 For any given $M\ge0$, let 
 \[
 \hat{\epsilon}(M):=\limsup\limits_{t\to+\infty}\left[\max \limits_{0\le x\le c_0t-M}| u(x,t)-q_{c_0}(x-h(t))|\right];
 \]
  then we have
 $$
 \hat{\epsilon}(M)\le \limsup_{t\to+\infty}\left[\max_{0\le x\le c_0t-M}\left|u(x,t)-\frac{a}{b}\right|\right]+\limsup_{t\to+\infty}\left[\max_{0\le x\le c_0t-M}\left| \frac{a}{b}-q_{c_0}(x-h(t))\right|\right].
 $$
 Combining the fact $q_{c_0}(-\infty)=\frac{a}{b}$ and Lemma \ref{Mtgoeswuq}, we obtain
 \begin{equation}\label{eM}
 	\hat{\epsilon}(M)\to 0\ \text{as}\ M\to+\infty.
 \end{equation}
 Obviously,
 \begin{equation}\label{5lmu}
 \begin{split}
 &\limsup_{t\to+\infty}\left[\max_{0\le x\le h(t)}|u(x,t)-q_{c_0}(x-h(t))|\right]\\
 &\le \limsup_{t\to+\infty}\left[\max_{c_0t-M\le x\le h(t)}|u(x,t)-q_{c_0}(x-h(t))|\right]+\hat{\epsilon}(M)\\
 &=\hat{\epsilon}(M). \\	
 \end{split}    	
 \end{equation}
 Letting $M\to+\infty$ in \eqref{5lmu}, we deduce from  \eqref{eM} that
 $$\lim_{t\to+\infty}\left[\max_{0\le x\le h(t)}| u(x,t)-q_{c_0}(x-h(t))|\right]=0.$$
 This completes the proof.

\section{Proof of Theorem \ref{c_0<c}}

	Let $(\overline{u}(x,t),\bar{h}(t))$ denote the unique solution of problem \eqref{free boundary equation} with $A(x-ct)$ replaced by $a$ and $\mu(A(h(t)-ct))$ replaced by $\mu(a)$. By Theorems 3.3 and 4.2 in \cite{DuLin2010}, we see that there exists $\bar{h}_{\infty}\in (0,+\infty]$, such that $\lim\limits_{t\to +\infty}\bar{h}(t)=\bar{h}_{\infty}$ and the following alternative holds: \\
	Either
	
	(i)  $\bar{h}_{\infty}<+\infty$ and	$$	\lim_{t\to +\infty}\|\overline{u}(x,t)\|_{C([0,\bar{h}(t)])}=0;$$
	or
	
	(ii) $\bar{h}_{\infty}=+\infty,\ \lim\limits_{t\to +\infty}\overline{u}(t,x)=\frac{a}{b}$ uniformly for $x$ in any bounded subset of $[0,+\infty)$ and
	$$	\lim_{t\to +\infty}\frac{\bar{h}(t)}{t}=c_0.$$
	
Since $A(x-ct)\le a$ and  $\mu(A(\bar{h}(t)-ct))\leq \mu(a)$, by the comparison principle, we conclude that
	$$	0\le u(x,t)\le \overline{u}(x,t)\ \text{for}\ t\ge 0,\ x\in [0,h(t)],$$
	and
	$$	h(t)\le \bar{h}(t)\ \text{for}\ t\ge 0.$$
	
	If $h_{\infty}<+\infty$, by Theorem \ref{0xiaoyucvanishing}, we obtain $\displaystyle{\lim_{t\to +\infty}\|{u}(\cdot,t)\|_{C([0,h(t)])}}=0$.
	
	If $h_{\infty}=+\infty$, we have $\limsup\limits_{t\to +\infty}\frac{h(t)}{t}\le\lim\limits_{t\to +\infty}\frac{\bar{h}(t)}{t}= c_0$. Thanks to $c_0< c$, there exists $\bar{T}>0$ such that	$$ h(t)-ct<0\ \text{for}\ t\ge \bar{T}.
	 $$
	Hence,  for $t\ge \bar{T}$, we obtain
	$$ A(x-ct)=a\ \mbox{for}\ x\in[0,h(t)],\ \mbox{and}\ \mu(A(h(t)-ct))=\mu(a).$$
	Using Theorem 1.2 in \cite{DuMZhou2014}, we obtain
	$$\lim_{t\to +\infty}[h(t)-c_0t]= \check{C}\ \text{for some}\ \check{C}\in\R^1,$$
	and	$$\lim_{t\to +\infty}\| u(\cdot,t)-q_{c_0}(\cdot-h(t))\|_{L^{\infty}([0,h(t)])}=0.$$
The proof is complete.

\section{Proof of Theorem 1.5}

Define
$\mathcal{X}(h_0):=\{\phi \in C^2([0,h_0]) : \phi \text{ satisfies}\ \eqref{i}\}$.
Fix $\phi \in \mathcal{X}(h_0)$, and for $\sigma > 0$, let $(u_{\sigma}, h_{\sigma})$ denote the unique positive solution of problem \eqref{free boundary equation} with initial value $u_0 =\sigma\phi$. Clearly $u_0\in \mathcal{X}(h_0)$.
We complete the proof by several lemmas.

\begin{Lemma}\label{sxsd}
	$(i)$ If vanishing happens for $(u_{\sigma^1},h_{\sigma^1})$, then vanishing occurs when $0<\sigma\le\sigma^{1}$.
	
	$(ii)$ If spreading happens for $(u_{\sigma^1},h_{\sigma^1})$, then spreading also happens when $\sigma\geq \sigma^{1}$.
\end{Lemma}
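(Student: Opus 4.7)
The plan is to reduce both statements to the comparison principle (Theorem \ref{comparison principle} and Remark 2.5) for the free boundary problem \eqref{free boundary equation}, combined with the vanishing criterion of Theorem \ref{0xiaoyucvanishing} and the trichotomy of long-time behaviour established in Theorems \ref{c_0>c}--\ref{c_0<c}.

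First I would establish the basic monotonicity: for $0<\sigma_1\le\sigma_2$, the two solutions $(u_{\sigma_1},h_{\sigma_1})$ and $(u_{\sigma_2},h_{\sigma_2})$ share the same initial interval $[0,h_0]$, and their initial data satisfy $\sigma_1\phi\le\sigma_2\phi$ on $[0,h_0]$ since $\phi\ge 0$. Hence $(u_{\sigma_2},h_{\sigma_2})$ is a (super)solution to \eqref{free boundary equation} with initial value $\sigma_1\phi$, while $(u_{\sigma_1},h_{\sigma_1})$ is itself the solution with that initial value. Applying Theorem \ref{comparison principle} therefore yields
\[
h_{\sigma_1}(t)\le h_{\sigma_2}(t),\qquad u_{\sigma_1}(x,t)\le u_{\sigma_2}(x,t)\quad\text{for }t>0,\ x\in[0,h_{\sigma_1}(t)].
\]

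For part (i), suppose vanishing happens for $(u_{\sigma^1},h_{\sigma^1})$, so that $h_{\sigma^1,\infty}:=\lim_{t\to\infty}h_{\sigma^1}(t)<+\infty$. If $0<\sigma\le\sigma^1$, the monotonicity above gives $h_{\sigma}(t)\le h_{\sigma^1}(t)$ for all $t>0$, hence $h_{\sigma,\infty}\le h_{\sigma^1,\infty}<+\infty$. Theorem \ref{0xiaoyucvanishing} then forces $\|u_{\sigma}(\cdot,t)\|_{C([0,h_{\sigma}(t)])}\to 0$, i.e.\ vanishing occurs for $(u_{\sigma},h_{\sigma})$.

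For part (ii), suppose spreading happens for $(u_{\sigma^1},h_{\sigma^1})$, so in particular $h_{\sigma^1,\infty}=+\infty$ (by Theorems \ref{c_0>c}--\ref{c_0<c}, this is equivalent to being in the spreading alternative, since vanishing is characterized by $h_\infty<\infty$). If $\sigma\ge\sigma^1$, then $h_{\sigma}(t)\ge h_{\sigma^1}(t)$ for all $t>0$, hence $h_{\sigma,\infty}=+\infty$. Since the dichotomy provided by Theorems \ref{c_0>c}, \ref{c_0=c}, and \ref{c_0<c} is exhaustive and vanishing corresponds to $h_\infty<\infty$, the solution $(u_{\sigma},h_{\sigma})$ must fall into the spreading alternative.

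I expect no serious obstacle: the argument is a direct application of the comparison principle followed by the dichotomy. The only point requiring minor care is the justification that the spreading/vanishing alternatives are cleanly distinguished by the finiteness of $h_\infty$, which is explicit in the statements of Theorems \ref{c_0>c}--\ref{c_0<c}.
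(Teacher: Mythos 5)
Your proposal is correct and follows exactly the route the paper intends: the paper's own proof is the one-line remark that the lemma ``follows from the comparison principle immediately,'' and your write-up simply makes explicit the monotonicity $h_{\sigma_1}(t)\le h_{\sigma_2}(t)$ from Theorem \ref{comparison principle} together with the characterization of vanishing by $h_\infty<\infty$ via Theorem \ref{0xiaoyucvanishing} and the dichotomy theorems. No gaps; this is the same argument, just written out.
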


\begin{proof}
The lemma follows from the comparison principle immediately.\end{proof}

Define
$$\sigma_{\ast}=\sigma_{\ast}(c,h_0)=\left\{\begin{array}{l} 0,\;  \mbox{ if } (h_\sigma)_\infty=+\infty \mbox{ for every } \sigma>0, \smallskip 
\\
\sup\big\{\sigma_{0}: (h_\sigma)_\infty<+\infty \text{ for}\ \sigma\in(0,\sigma_0]\big\},\; \mbox{ otherwise}.
\end{array}\right.
$$	
Clearly $\sigma_*\in [0,+\infty]$.

\begin{Lemma}\label{sad0}
	For any $\sigma>0$, if there exists $T\ge0$ such that $h_{\sigma}(T)\ge\frac{\pi}{2}\sqrt{\frac{d}{a}}$, then $(h_{\sigma})_{\infty}=+\infty$.
\end{Lemma}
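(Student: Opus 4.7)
My plan is to argue by contradiction: assume $(h_\sigma)_\infty<+\infty$ and derive a contradiction with the vanishing conclusion of Theorem \ref{0xiaoyucvanishing}. First I would use Theorem \ref{0xiaoyucvanishing} to conclude that $u_\sigma(\cdot,t)\to 0$ uniformly on $[0,h_\sigma(t)]$. Next, because $(h_\sigma)_\infty<+\infty$ and $c>0$, there exists $T_1>0$ such that $h_\sigma(t)<ct$ for $t\ge T_1$; hence $A(x-ct)=a$ on $[0,h_\sigma(t)]$ for all $t\ge T_1$. By Lemma \ref{estimate} the free boundary is strictly increasing, so I can pick $T^\ast>\max\{T,T_1\}$ with
\[
h_\sigma(T^\ast)>h_\sigma(T)\ge \tfrac{\pi}{2}\sqrt{d/a},
\]
and in particular with strict inequality.

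The key step is to construct a positive stationary subsolution on the fixed interval $[0,h_\sigma(T^\ast)]$ for the reduced logistic equation $u_t=du_{xx}+au-bu^2$. Since $h_\sigma(T^\ast)>\tfrac{\pi}{2}\sqrt{d/a}$, the principal eigenvalue $\lambda_1$ of
\[
-d\varphi''-a\varphi=\lambda\varphi\quad \text{on }[0,h_\sigma(T^\ast)],\qquad \varphi'(0)=\varphi(h_\sigma(T^\ast))=0,
\]
is strictly negative; let $\varphi_1>0$ be a corresponding eigenfunction. A direct calculation shows that $\underline u(x):=\varepsilon\varphi_1(x)$ satisfies $-d\underline u''-a\underline u+b\underline u^2=\varepsilon\varphi_1(\lambda_1+b\varepsilon\varphi_1)<0$ for all sufficiently small $\varepsilon>0$, so $\underline u$ is a (time-independent) subsolution. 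By the strong maximum principle and the Hopf lemma applied to $u_\sigma(\cdot,T^\ast)$, we have $u_\sigma(x,T^\ast)>0$ for $x\in[0,h_\sigma(T^\ast))$ with $-\partial_x u_\sigma(h_\sigma(T^\ast),T^\ast)>0$, and the same boundary behaviour holds for $\varphi_1$, so shrinking $\varepsilon$ further ensures $\varepsilon\varphi_1\le u_\sigma(\cdot,T^\ast)$ on $[0,h_\sigma(T^\ast)]$.

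Finally, on the cylinder $[0,h_\sigma(T^\ast)]\times[T^\ast,+\infty)$ the function $u_\sigma$ solves $u_t=du_{xx}+au-bu^2$ (because $A=a$ there) with $u_\sigma(h_\sigma(T^\ast),t)\ge 0=\underline u(h_\sigma(T^\ast))$ and $\partial_x u_\sigma(0,t)=0=\underline u'(0)$. The standard parabolic comparison principle then yields $u_\sigma(x,t)\ge\varepsilon\varphi_1(x)$ for all $t\ge T^\ast$ and $x\in[0,h_\sigma(T^\ast)]$, contradicting $\|u_\sigma(\cdot,t)\|_\infty\to 0$. Hence $(h_\sigma)_\infty=+\infty$. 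The only genuinely delicate point is verifying the inequality $\varepsilon\varphi_1\le u_\sigma(\cdot,T^\ast)$ up to the Dirichlet endpoint, which is handled by the Hopf boundary estimate; everything else is a standard eigenvalue-plus-comparison argument.
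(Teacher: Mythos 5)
Your proposal is correct and follows essentially the same route as the paper: both reduce to the autonomous case $A\equiv a$ once $h_\sigma(t)<ct$, note that $h_\sigma(T^\ast)>\frac{\pi}{2}\sqrt{d/a}$ by strict monotonicity of the free boundary, and then contradict the vanishing conclusion of Theorem \ref{0xiaoyucvanishing}. The only difference is that the paper delegates the final step to the argument of Lemma 3.1 in \cite{DuLin2010}, whereas you write out that argument explicitly (the principal-eigenfunction subsolution $\varepsilon\varphi_1$ together with the Hopf lemma to fit it under $u_\sigma(\cdot,T^\ast)$), and your details are accurate.
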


\begin{proof}
To obtain a contradiction, suppose that $(h_{\sigma})_{\infty}<+\infty$. It follows that there exists $T^{\ast}>T$ such that $$h_{\sigma}(t)-ct<0\ \text{for all}\ t\ge T^{\ast}.$$
Therefore, for $t\ge T^{\ast}$,
$$A(x-ct)=a\ \text{for}\ x\in[0,h_{\sigma}(t)],\ \mbox{and}\ \mu(A(h_{\sigma}(t)-ct))=\mu(a).$$
Due to $h_{\sigma}^{\prime}(t)>0,$
$$h_{\sigma}(T^{\ast})>h_{\sigma}(T)\geq \frac{\pi}{2}\sqrt{\frac{d}{a}}.$$
By the arguments in the proof of Lemma 3.1 in \cite{DuLin2010}, we  derive a contradiction.
\end{proof}

\begin{Lemma}\label{tsad0}
 	If $h_0 \ge\frac{\pi}{2}\sqrt{\frac{d}{a}}$, then $\sigma_{\ast}=0$.
\end{Lemma}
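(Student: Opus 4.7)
The plan is to apply Lemma \ref{sad0} directly with $T=0$. Since $h_\sigma(0)=h_0\ge \frac{\pi}{2}\sqrt{\frac{d}{a}}$ for every $\sigma>0$, Lemma \ref{sad0} immediately yields $(h_\sigma)_\infty=+\infty$ for every $\sigma>0$.

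Consequently, there is no $\sigma>0$ for which $(h_\sigma)_\infty<+\infty$, so $\sigma_*$ falls into the first branch of its defining formula, giving $\sigma_*=0$. No further argument is needed, and there is no real obstacle: the work has already been done in Lemma \ref{sad0}, which in turn relies on the classical observation (Lemma 3.1 of \cite{DuLin2010}) that once the population range in the purely favourable regime exceeds the critical length $\frac{\pi}{2}\sqrt{d/a}$, vanishing is impossible. The present lemma simply records the special case where this threshold is met already at the initial time.
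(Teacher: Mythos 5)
Your proof is correct and is essentially identical to the paper's: both apply Lemma \ref{sad0} (with $T=0$, since $h_\sigma(0)=h_0\ge\frac{\pi}{2}\sqrt{d/a}$) to conclude $(h_\sigma)_\infty=+\infty$ for every $\sigma>0$, and then read off $\sigma_*=0$ from the definition. No issues.
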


\begin{proof}
By Lemma \ref{sad0}, we deduce that $(h_{\sigma})_{\infty}=+\infty$ for all $\sigma>0$,	and hence  $\sigma_{\ast}=0$.\end{proof}

\begin{Lemma}If $h_0<\frac{\pi}{2}\sqrt{\frac{d}{a}}$, then $\sigma_{\ast}\in (0, +\infty]$.\end{Lemma}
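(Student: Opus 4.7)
By Lemma 6.1(i) and the definition of $\sigma_*$, it suffices to exhibit a single $\sigma_1>0$ such that vanishing happens for $(u_{\sigma_1},h_{\sigma_1})$, as then $(h_\sigma)_\infty<+\infty$ for every $\sigma\in(0,\sigma_1]$ and therefore $\sigma_*\geq\sigma_1>0$. The plan is to produce this by a small-data supersolution argument.

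Since $A(x-ct)\le a$ for all $(x,t)$ and $\mu(A(\zeta))\le\mu(a)$ for all $\zeta\in[a_0,a]$, any supersolution of the \textbf{auxiliary} free boundary problem obtained from \eqref{free boundary equation} by replacing $A(x-ct)$ with $a$ and $\mu(A(h-ct))$ with $\mu(a)$ is automatically a supersolution of \eqref{free boundary equation} in the sense of Theorem \ref{comparison principle} and Remark 2.5. So the first step is to reduce to the constant-coefficient problem; the second is to build a bounded-range supersolution of that problem when the initial data are small.

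For the construction, fix $\delta>0$ so that $h^*:=(1+\delta)h_0<\tfrac{\pi}{2}\sqrt{d/a}$; this is possible because $h_0<\tfrac{\pi}{2}\sqrt{d/a}$. Then $\lambda_1:=d\bigl(\tfrac{\pi}{2h^*}\bigr)^2>a$, so $\gamma:=\tfrac{1}{2}(\lambda_1-a)>0$. Define
\[
\bar h(t):=h^*-\tfrac{\delta h_0}{2}e^{-\gamma t},\qquad \bar u(x,t):=\sigma K e^{-\gamma t}\cos\!\Bigl(\frac{\pi x}{2\bar h(t)}\Bigr),\quad 0\le x\le\bar h(t),
\]
where $K>0$ is chosen so that $\phi(x)\le K\cos\bigl(\tfrac{\pi x}{2\bar h(0)}\bigr)$ on $[0,h_0]$ (possible since $\bar h(0)=h_0+\delta h_0/2>h_0$ and the cosine is bounded below on $[0,h_0]$). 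Then $\bar h(0)>h_0$, $\bar h'(t)>0$, $\bar h(t)\uparrow h^*$, and I would check the four supersolution conditions in turn: (i) $\bar u_x(0,t)=0$ and $\bar u(\bar h(t),t)=0$ from the form of the cosine; (ii) $\sigma\phi(x)=u_0(x)\le\bar u(x,0)$ by the choice of $K$; (iii) a direct computation shows
\[
\bar u_t-d\bar u_{xx}-a\bar u+b\bar u^2=\Bigl(-\gamma+d\tfrac{\pi^2}{4\bar h(t)^2}-a\Bigr)\bar u \;-\;\bar u_x\cdot\frac{x\,\bar h'(t)}{\bar h(t)}\;+\;b\bar u^2\;\geq\;\bigl(\tfrac{\lambda_1-a}{2}\bigr)\bar u>0,
\]
using $\bar h(t)\le h^*$ (which gives $d\pi^2/(4\bar h^2)\ge\lambda_1$), $\bar u_x\le 0$, $\bar h'>0$, $x\ge 0$; (iv) the free-boundary inequality $\bar h'(t)\ge-\mu(a)\bar u_x(\bar h(t),t)$ reduces, after cancelling $e^{-\gamma t}$ and using $\bar h(t)\ge h_0+\delta h_0/2$, to the explicit smallness condition
\[
\sigma K\;\le\;\frac{\delta\gamma h_0^2(2+\delta)}{2\pi\,\mu(a)}\,,
\]
which fixes an admissible $\sigma_1>0$ (depending only on $h_0,\phi,c$ through $K,\delta,\gamma,\mu(a)$).

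Once these four items hold, the comparison principle applied to the auxiliary problem gives $h_\sigma(t)\le\bar h(t)\le h^*$ for all $t\ge0$ and $\sigma\in(0,\sigma_1]$, so $(h_\sigma)_\infty\le h^*<+\infty$, i.e.\ vanishing happens. Combined with Lemma 6.1(i) this yields $\sigma_*\ge\sigma_1>0$, and since $\sigma_*\in[0,+\infty]$ a priori, we conclude $\sigma_*\in(0,+\infty]$. The main technical step, and the only place the hypothesis $h_0<\tfrac{\pi}{2}\sqrt{d/a}$ is used, is the eigenvalue estimate $\lambda_1>a$ behind the inequality in item (iii); everything else is a matter of matching constants in items (ii) and (iv), which is routine.
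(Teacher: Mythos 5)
Your proposal is correct and follows essentially the same route as the paper: both reduce to the constant-coefficient problem (replacing $A$ by $a$ and $\mu(A(\cdot))$ by $\mu(a)$) via the comparison principle and then invoke small-data vanishing for that problem, where the paper simply cites Lemma 3.8 of \cite{DuLin2010} while you reprove it with the explicit decaying-cosine supersolution (which is the same construction underlying that cited lemma). Your computations in items (i)--(iv) check out, so the only difference is that your argument is self-contained rather than by citation.
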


\begin{proof}
For any $\sigma>0$, let $(\bar{u}_{\sigma},\bar{h}_{\sigma})$ denote the unique solution of problem \eqref{free boundary equation} with $A(x-ct)$ replaced by $a$ and $\mu(A(h(t)-ct))$ replaced by $\mu(a)$, and initial value $u_0=\sigma\phi$. By Lemma 3.8 in \cite{DuLin2010},
 vanishing happens for $(\bar{u}_{\sigma},\bar{h}_{\sigma})$ for all small $\sigma>0$, and so $(\bar{h}_{\sigma})_{\infty}<+\infty$ for such $\sigma$. It follows from the comparison principle that $(h_{\sigma})_{\infty}<+\infty$ for all small $\sigma>0$. Thus $\sigma_{\ast}\in (0, +\infty]$. This completes the proof.\end{proof}

\begin{Lemma}\label{VorS}
	$(\mathrm{i})$ Vanishing happens for $(u_{\sigma},h_{\sigma})$ when $\sigma\le\sigma_{\ast}$.
	
	$(\mathrm{ii})$ Spreading happens for $(u_{\sigma},h_{\sigma})$ when $\sigma>\sigma_{\ast}$.
\end{Lemma}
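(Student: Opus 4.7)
The plan is to read both parts of the lemma almost directly off the definition of $\sigma_{\ast}$ as a supremum, using the $\sigma$-monotonicity from Lemma \ref{sxsd} and Theorem \ref{0xiaoyucvanishing} to convert freely between the two equivalent descriptions of vanishing ($(h_\sigma)_\infty<+\infty$ versus $\|u_\sigma(\cdot,t)\|_\infty\to 0$). The only genuinely nontrivial case will be the threshold value $\sigma=\sigma_{\ast}$ itself.

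First I would dispatch (ii). If $\sigma>\sigma_{\ast}$, then $\sigma$ fails the defining condition of the supremum, so there exists $\sigma'\in(0,\sigma]$ with $(h_{\sigma'})_\infty=+\infty$, i.e.\ spreading for $(u_{\sigma'},h_{\sigma'})$. Lemma \ref{sxsd}(ii) propagates spreading from $\sigma'$ to the larger value $\sigma$, giving (ii). Next, for (i) in the open range $\sigma<\sigma_{\ast}$, the definition of $\sigma_{\ast}$ supplies some $\sigma_0$ with $\sigma<\sigma_0\le\sigma_{\ast}$ for which $(h_{\sigma'})_\infty<+\infty$ on $(0,\sigma_0]$; in particular $(h_\sigma)_\infty<+\infty$, and Theorem \ref{0xiaoyucvanishing} upgrades this to vanishing. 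The degenerate values $\sigma_{\ast}=0$ and $\sigma_{\ast}=+\infty$ either make (i) vacuous or reduce to the strict inequality just handled.

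The substantive case is $\sigma=\sigma_{\ast}$ with $\sigma_{\ast}\in(0,+\infty)$. I would argue by contradiction: assume $(h_{\sigma_{\ast}})_\infty=+\infty$. Since $h_{\sigma_{\ast}}(t)$ is strictly increasing and unbounded, there exists a finite $T>0$ with $h_{\sigma_{\ast}}(T)>\tfrac{\pi}{2}\sqrt{d/a}$. Invoking continuous dependence of the solution pair $(u_\sigma,h_\sigma)$ on the initial datum $\sigma\phi$ over the fixed interval $[0,T]$, the strict inequality $h_\sigma(T)>\tfrac{\pi}{2}\sqrt{d/a}$ survives for all $\sigma$ slightly below $\sigma_{\ast}$. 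Lemma \ref{sad0} then forces $(h_\sigma)_\infty=+\infty$ for such $\sigma$, which directly contradicts the vanishing already established for every $\sigma<\sigma_{\ast}$. Hence $(h_{\sigma_{\ast}})_\infty<+\infty$, and Theorem \ref{0xiaoyucvanishing} closes (i).

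The main obstacle is the continuous-dependence input: I need $h_\sigma(T)\to h_{\sigma_{\ast}}(T)$ as $\sigma\uparrow\sigma_{\ast}$ for fixed $T$. This is standard for \eqref{free boundary equation} but requires care because the boundary $x=h_\sigma(t)$ itself moves with $\sigma$. The cleanest route is to extract a subsequential limit of $(u_{\sigma_n},h_{\sigma_n})$ as $\sigma_n\uparrow\sigma_{\ast}$, using the $\sigma$-uniform bounds from Theorem \ref{local existence} and Lemma \ref{estimate} (after a change of variables straightening the moving boundary), and then identify the limit with $(u_{\sigma_{\ast}},h_{\sigma_{\ast}})$ by the uniqueness part of Theorem \ref{local existence}. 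Once this continuity is in hand, the rest of the lemma is purely bookkeeping around the supremum defining $\sigma_{\ast}$.
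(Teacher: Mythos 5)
Your proposal is correct and follows essentially the same route as the paper: the cases $\sigma<\sigma_*$ and $\sigma>\sigma_*$ are read off the supremum together with the monotonicity of Lemma \ref{sxsd}, and the threshold case $\sigma=\sigma_*$ is settled by the identical contradiction argument (continuous dependence pushes $h_\sigma(T)>\frac{\pi}{2}\sqrt{d/a}$ down to $\sigma$ slightly below $\sigma_*$, then Lemma \ref{sad0} forces spreading there). The only difference is that you sketch a compactness-plus-uniqueness justification of the continuous dependence, which the paper simply cites as known.
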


\begin{proof}
If $\sigma_{\ast}=+\infty$, then there is nothing left to prove. We suppose next $\sigma_{\ast}\in[0,+\infty)$.
	
If $h_0 \ge\frac{\pi}{2}\sqrt{\frac{d}{a}}$, then $\sigma_{\ast}=0$ by Lemma \ref{tsad0}, and the desired conclusion follows trivially.
	
 Hereinafter, we assume that $h_0 <\frac{\pi}{2}\sqrt{\frac{d}{a}}$.
	To complete the proof, it suffices to show that vanishing happens when $\sigma=\sigma_{\ast}$.
Suppose on the contrary that spreading happens when $\sigma=\sigma_{\ast}$. Then  there exists $t_1>0$ such that
	$h_{\sigma_{\ast}}(t_1)>\frac{\pi}{2}\sqrt{\frac{d}{a}}+1$. Since the solution of problem \eqref{free boundary equation} depends continuously on its initial value, we deduce that
	 $$h_{\sigma_{\ast}-\epsilon}(t_1)>\frac{\pi}{2}\sqrt{\frac{d}{a}}$$
	 for all small $\epsilon>0$.
	 By Lemma \ref{sad0}, it follows that
	 $$\left(h_{\sigma_{\ast}-\epsilon}\right)_{\infty}=+\infty,$$
	 which implies that spreading happens for $(u_{\sigma_{\ast}-\epsilon},h_{\sigma_{\ast}-\epsilon})$. But this is a contradiction to the definition of $\sigma_{\ast}$. The proof is complete.\end{proof}

\end{document}